\title{
Large-time asymptotic stability of Riemann shocks\\
of scalar balance laws}
\author{V. Duch\^{e}ne\thanks{Univ Rennes, CNRS, IRMAR - UMR 6625, F-35000 Rennes, France. \url{vincent.duchene@univ-rennes1.fr}} \and L.M. Rodrigues\thanks{Univ Rennes \& IUF, CNRS, IRMAR - UMR 6625, F-35000 Rennes, France. \url{luis-miguel.rodrigues@univ-rennes1.fr}.
\hspace*{2em}Research of LMR was partially supported by the city of Rennes.
}}
\date{\today}
\let\Title\@title
\let\Author\@author
\newtheorem{Theorem}{Theorem}[section]
\newtheorem{Proposition}[Theorem]{Proposition}
\newtheorem{Corollary}[Theorem]{Corollary}
\newtheorem{Lemma}[Theorem]{Lemma}
\newtheorem{Remark}[Theorem]{Remark}
\newcommand\uU{{\underline U}}
\newcommand\uu{{\underline u}}
\newcommand\cS{{\mathcal S}}
\newcommand\cL{{\mathcal L}}
\newcommand\cC{{\mathcal C}}
\newcommand\ta{{\widetilde a}}
\newcommand\tb{{\widetilde b}}
\newcommand\tr{{\widetilde r}}
\newcommand\tu{{\widetilde u}}
\newcommand\tv{{\widetilde v}}
\newcommand\tpsi{{\widetilde \psi}}
\newcommand{\RR}{\mathbb{R}}
\newcommand{\CC}{\mathbb{C}}
\newcommand{\NN}{\mathbb{N}}
\DeclareMathOperator\dd{d}
\renewcommand\d{\partial}
\DeclareMathOperator\sign{sgn}
\DeclareMathOperator\argmin{arg\,min}
\newcommand{\ie}{{\em i.e.}~}
\newcommand{\id}[1]{\left\vert_{_{#1}}\right.}
\newcommand{\eqdef}{\stackrel{\rm def}{=}}
\DeclarePairedDelimiter\abs{\lvert}{\rvert}
\DeclarePairedDelimiter\Norm{\big\lVert}{\big\rVert}
\DeclareMathOperator{\Div}{div}
\newcommand{\fpar}{f_{\shortparallel}}
\newcommand{\fperp}{f_{\perp}}
\begin{document}
\thispagestyle{empty}
\maketitle

\begin{abstract}
We prove the large-time asymptotic orbital stability of strictly entropic Riemann shock solutions of first order scalar hyperbolic balance laws, under piecewise regular perturbations provided that the source term is dissipative about endstates of the shock. Moreover the convergence towards a shifted reference state is exponential with a rate predicted by the linearized equations about constant endstates. 

\vspace{1em}
\noindent{\it Keywords}: Riemann shocks; asymptotic stability; scalar balance laws.

\vspace{1em}
\noindent{\it 2010 MSC}: 35B35, 35L02, 35L67, 35B40, 35L03, 37L15.
\end{abstract}

\section*{Introduction}

In the present contribution, we study the large-time asymptotic behavior of solutions to first order scalar hyperbolic balance laws, that is, in the one-dimensional case\footnote{While most of this work is restricted to dimension one, we also discuss in Section~\ref{s:multiD} how ideas therein extend to multidimensional settings.}, of the form
\begin{equation}\label{eq-u}
\d_t u+\d_x \big(f(u)\big)=g(u), \qquad u:\RR^+\times\RR\to\RR\,,
\end{equation}
in a neighborhood of strictly entropy-admissible Riemann shocks, that is, about strictly entropy-admissible traveling waves with profiles piecewise constant and exhibiting a single discontinuity.

Equations such as~\eqref{eq-u} are prototypes for dynamics where only convective and reaction effects are relevant, and, as such, are ubiquitous in applications, at least as first-order approximations in some particular regimes. In particular, when $f(u)=c(u)\,u$ and $g(u)=r(u)\,u$, it describes the evolution of a density $u$ of point particles moving with speed $c$ and reacting at rate $r$.

In comparison with the purely conservative case encoded by homogeneous conservation laws (\ie with $g\equiv 0$), the (local) well-posedness of the standard initial-value (Cauchy) problem for~\eqref{eq-u} is not significantly altered by the addition of sufficiently smooth (say locally Lipschitz) reaction terms $g$. In particular the theory of  Kru\v zkov~\cite{Kruzhkov} applies and there exists a unique bounded local-in-time entropy weak solution for any bounded initial data. However in contrast the large-time asymptotic behavior of the solutions is expected to be deeply impacted by the presence of the source term, even when it does not lead to reaction blow-up, for instance when $g\in  W^{1,\infty}(\RR)$ or $g$ is dissipative at infinity. This expectation is consistent with the simple observations that the purely reactive case (with $f\equiv0$) assigns a distinguished role to stable zeros of $g$ --- that is, the $u_\star$ such that $g(u_\star)=0$ and $g'(u_\star)<0$ --- and that related growth and decay mechanisms are generically exponentially fast, hence much stronger than the algebraic decay involved in the purely conservative large-time dynamics. That the reaction term plays a dominant role --- at least near equilibria --- is also supported by the fact that the linearized operator about zeros of $g$, that is
\[L=-f'(u_\star)\d_x +g'(u_\star),\] on, say, $BUC^0(\RR)$ with domain $BUC^1(\RR)$ when $f'(u_\star)\neq 0$, $BUC^0(\RR)$ otherwise, is closed densely defined with spectrum $g'(u_\star)+i\RR$ if $f'(u_\star)\neq 0$, $\{g'(u_\star)\}$ otherwise. In other words, the spectral stability of zeros of $g$ as equilibria of~\eqref{eq-u} agrees with their stability as equilibria of the purely reactive equation. Incidentally note that we have used notation $BUC^k$ to denote the set of $\cC^k$ functions whose derivatives up to order $k$ are bounded and uniformly continuous.

Another good grasp at new large-time phenomena (compared to the conservative case) is already obtained from the analysis of the structure of relative equilibria, namely in the present case of traveling waves. Since the presence of a source term discards self-similarity, these are the most natural candidates to serve as asymptotic profiles or building blocks of a large-time description. Traveling waves of~\eqref{eq-u} are given as $u(t,x)=\uU(x-\sigma t)$ with wavespeed $\sigma$ and waveprofile $\uU$ solving
\[
(f(\uU)-\sigma\,\uU)'=g(\uU)\,.
\]
One striking novelty in the non-homogeneous setting is the existence of traveling wave solutions with non-trivial profiles, whereas in the conservative case only piecewise constant profiles are available and the only spatially periodic entropy-admissible profiles are constant. The most obvious ones are obtained by picking two consecutive non degenerate zeros $u_-$ and $u_+$ of $g$, a speed $\sigma\notin f'([u_-,u_+])$ and solving $\uU'=g(\uU)/(f'(\uU)-\sigma)$ between these two zeros. Yet, in this configuration one of the two endstates is spectrally unstable and the corresponding front inherits this instability. More interesting waves are obtained if one allows the presence of a sonic, or characteristic, point in the profile, that is, a point where $f'(\uU)-\sigma$ vanishes. Necessarily then the wavespeed $\sigma$ must be equal to the sound speed $f'(u_\star)$ at a zero $u_\star$ of $g$. In the non degenerate bistable case when $u_-$, $u_\star$ and $u_+$ are three consecutive zeros of $g$ with $g'(u_-)<0$, $g'(u_+)<0$ and $g'(u_\star)>0$ and $\sigma=f'(u_\star)$, $f''(u_\star)\neq0$, one indeed derives spectrally stable waves in this way, that are fronts connecting $u_-$ and $u_+$ through $u_\star$. As a consequence of the foregoing discussion, note that the presence of a nondegenerate source term selects a discrete set of constant solutions, but also a discrete set of wavespeeds for stable fronts. Beyond (discontinuous or smooth) fronts and constant solutions, the equation may also support spatially periodic traveling waves. These are however necessarily discontinuous and, as a consequence of Lax's admissibility condition, each of their smooth part must also contain a sonic point (see~\cite{JNRYZ} for details, on a closely related system case).

Under rather natural assumptions on $f$ and $g$ --- including the strict convexity of $f$ and the dissipativity at infinity of $g$ ---, it has been proved that starting from an $L^\infty$ initial data that is either spatially periodic or is constant near $-\infty$ and near $\infty$, the large-time dynamics is indeed well captured in $L^\infty$ topology by piecing together traveling waves (constants, fronts or periodic waves). In the periodic setting~\cite{FanHale93,Lyberopoulos94,Sinestrari95,Sinestrari97}, every solution approaches asymptotically either a periodic (necessarily discontinuous) traveling wave, or a constant equilibrium. Moreover, periodic traveling waves are actually unstable and the rate of convergence is exponential in the latter case whereas it may be arbitrarily slow in the former case, even when restricting to solutions that are initially close and do converge to a periodic traveling wave. Starting from data with essentially compact support~\cite{Sinestrari96,MasciaSinestrari97}, the large-time asymptotics may a priori involve several blocks of different kinds (constants, fronts or periodics). Yet the scenario generating periodic blocks is also non generic and unstable. Note that at the level of regularity considered there the strict convexity assumption on $f$ plays a key role as it impacts the structure of possible discontinuities. The few contributions relaxing the convexity assumption add severe restrictions on $g$ or on the initial data, for instance linearity of $g$ in~\cite{Lyberopoulos92}, Riemann initial data in~\cite{Sinestrari97a,Mascia00} and monotonicity of the initial data in~\cite{Mascia98}.

At a technical level, one key ingredient in the proofs of the aforementioned series of investigations are generalized characteristics of Dafermos~\cite{Dafermos77}. They provide a formulation of the equation that is well suited to comparison principles thus to asymptotics in $L^\infty$ topology. Our goal here is in a neighborhood of one stable traveling wave (of a specific kind) to complete the picture with a description in stronger topologies assuming more regularity but less localization on initial data. By doing so we expect to contribute to put on a par the stability theory for~\eqref{eq-u} with the one successfully derived over the years for parabolic systems (see for instance~\cite{KapitulaPromislow-stability} for the stability of constants, fronts and solitary waves, and~\cite{JNRZ-conservation} for periodic waves). In particular, we derive our asymptotics under spectral stability assumptions that are sharp up to the exclusion of limit cases. Among the difficulties to overcome in carrying out such a general program are the absence of  regularization effects sufficiently strong to rely on a Duhamel formula based on a linearization about the reference wave and the presence of discontinuities and/or of sonic points in the profiles themselves that alter even the nature of the underlying spectral problems. 

Whereas in a companion paper~\cite{DR2} we do study waves exhibiting sonic points, we restrict here as announced 
to the stability of Riemann shocks, that is, to waves given by $\uu(t,x)=\uU(x-(\psi_0+\sigma t))$ with initial shock position $\psi_0\in\RR$, speed $\sigma\in\RR$ and wave profile $\uU$ such that
\[ \uU(x)=\begin{cases}
\uu_- &\text{ if }  x<0\\
\uu_+ &\text{ if }  x>0
\end{cases}\]
where $(\uu_-,\uu_+)\in\RR^2$, $\uu_+\neq\uu_-$.  The function $\uu$ is indeed an entropy-admissible solution provided that
\[
g(\uu_+)=0\,,\qquad g(\uu_-)=0\,,\qquad
f(\uu_+)-f(\uu_-)=\sigma (\uu_+-\uu_-)\,,
\]
and Oleinik's condition holds
\begin{equation}\label{gLax-shock-intro}
\begin{cases}
\qquad\qquad\qquad\sigma\,\geq\,f'(\uu_+)\,,&\\[0.5em]
\frac{f(\tau\,\uu_-+(1-\tau)\,\uu_+)-f(\uu_-)}{\tau\,\uu_-+(1-\tau)\,\uu_+-\uu_-}\geq
\frac{f(\tau\,\uu_-+(1-\tau)\,\uu_+)-f(\uu_+)}{\tau\,\uu_-+(1-\tau)\,\uu_+-\uu_+}&\qquad\textrm{for any }\ \tau\in(0,1)\,,\\[0.5em]
\qquad\qquad\qquad f'(\uu_-)\,\geq\,\sigma\,.
\end{cases}
\end{equation}
One may readily check that 
\begin{equation}\label{stab-spec-shock-intro}
g'(\uu_+)\leq0 \quad \text{ and } \quad g'(\uu_-)\leq0
\end{equation}
are necessary to exclude spectral instability of $\uu$. We prove asymptotic orbital stability in $W^{1,\infty}$ topology, with sharp exponential decay rates and asymptotic phase, under $BUC^1$ perturbations possibly jointly with perturbations on the position and the strength of the discontinuity jump when~\eqref{gLax-shock-intro} and~\eqref{stab-spec-shock-intro} hold with strict inequalities. Likewise, we also provide stability results under $BUC^k$ perturbations for any $k\geq1$. We stress that at this stage no convexity assumption is needed. Yet, our approach may also be extended to cases when perturbations are only piecewise $BUC^1$ with a finite number\footnote{Yet for exposition purposes, we only provide details about the case where this number is at most one.} of discontinuities of shock-type, and then we do assume that $f''(\uu_-)\neq0$ and $f''(\uu_+)\neq0$ (or only half of it if shock-type discontinuities are only introduced on one side of the reference discontinuity).

One important point contrasting with the purely conservative case is that near $\uu$ the positions of discontinuities arising from piecewise smooth perturbations with smooth parts sufficiently small in $BUC^1$ may be predicted at leading order from the linearized dynamics. This may be intuited by analogy from the consideration of solutions near $\uu\equiv0$ to 
\[
\d_t u+\d_x\left(\alpha\frac{u^2}{2}\right)=-\beta\,u
\]
with $\alpha\in\RR$, $\beta\geq0$. On the latter basic explicit example, by studying $\d_xu$ along characteristics, one readily checks that the existence of a classical solution and the persistence of regularity holds globally forward-in-time if and only if $\alpha\,\d_x(u(0,\cdot))\geq -\beta$. Hence when $\beta>0$ and $\alpha\neq0$, shock formation may be prevented by assuming asymmetric initial smallness on the derivative of the initial data. Incidentally note that this asymmetry is fundamental in~\cite{Mascia98}. This also hints at a classification of discontinuities in initial data between shock-like discontinuities across which $f'$ decreases and rarefaction-like discontinuities across which $f'$ increases. The latter are removable by a density argument in the sense that the generated dynamics may be approximated by the one arising from a family of initial data where the discontinuity is absent. In particular, provided results are proved under sharp asymmetric smallness conditions, there is no loss in generality in assuming that any discontinuity is of shock-type.

Though we hope that similar analyses could be carried out in some system\footnote{During the finalization of the present contribution we have been informed that a system case has been analyzed in~\cite{YZ} with distinct but not disjoint techniques. Parts of the arguments used in~\cite{YZ} actually originate in private communications of the second author of the present contribution to the second author of~\cite{YZ}.} cases, we use here crucially the scalar structure to analyze the evolution of the piecewise regularity in the following way. First we extend each smooth part of the initial data to a function on $\RR$, that is either close to $\uu_-$ or close to $\uu_+$. Then we propagate each of the extended initial data and achieve suitable estimates on the corresponding dynamics near stable constant states. At last we use the evolved extensions to determine the evolution of shock locations by solving the corresponding Rankine-Hugoniot conditions and glue them along the shock curves to obtain the solution for the original discontinuous initial data. In particular along the way in order to carry out the second step we prove a $BUC^1$ asymptotic stability result for constant solutions $\uu$, that is, constant functions with value a zero $\uu$ of $g$, such that $g'(\uu)<0$. Though in principle the foregoing result could be proved ---yet much less readily than $L^\infty$ asymptotics--- with classical characteristics and comparison principles\footnote{Similar results could also be obtained by energy estimates provided one relaxes the essentially sharp $BUC^1$ framework to the $L^2$-based $H^2$ space.} (along the lines in~\cite[Chapter~4]{Li94}), we choose to use tools as close as possible to those in the classical stability theory~\cite{KapitulaPromislow-stability,JNRZ-conservation}, relying on resolvent estimates and semigroup theory. However, as mentioned hereinabove, since regularization effects are too weak, it is not sufficient to consider the linearized dynamics. Instead we prove that spectral assumptions yield decay estimates for all nearby ---time and space dependent--- linear dynamics, hence actually use the evolution system (see~\cite[Chapter~5]{Pazy}) rather than semigroup framework.

In the rest of the present paper, we first study the asymptotic stability of constant states under regular perturbations in $BUC^1(\RR)$, as stated in Section~\ref{s:constant-shockless} and proved in Sections~\ref{s:linear} and~\ref{s:proof}. In Section~\ref{s:constant_by_shock} we extend our analysis to the case where a constant state is perturbed by a (small) shock. Then we turn to our main concern, the asymptotic stability of (large) Riemann shocks, under perturbations that are either regular (Section~\ref{s:shockless}) or piecewise regular with a small shock (Section~\ref{s:large_and_small_shocks}). Next, in Section~\ref{s:multiD} we investigate extensions to multidimensional settings. At last, in Section~\ref{s:conclusion}, we provide some further insights on limitations and extensions of the present analysis. 

\section{Asymptotic stability of constant states}\label{S:constant}

\subsection{Asymptotic stability under shockless perturbations}\label{s:constant-shockless}

In this section, first we show the asymptotic stability of constant states with respect to regular perturbations under the natural spectral condition. 

\begin{Proposition}\label{P.constant-conditional}
Let $g\in \cC^2(\RR)$ and $\uu\in\RR$ be such that 
\begin{equation}\label{stab-spec-constant}
g(\uu)=0\qquad\textrm{and}\qquad g'(\uu)<0\,.
\end{equation}
Then for any $C_0>1$, there exists $\epsilon>0$ such that for any $f\in \cC^2(\RR)$, for any $v_0\in BUC^1(\RR)$ satisfying 
\[\Norm{v_0}_{L^{\infty}(\RR)}\leq \epsilon\,,\]
the unique maximal classical solution to~\eqref{eq-u}, $u\in\cC^0([0,T_*(v_0));BUC^1(\RR))\cap \cC^1([0,T_*(v_0));BUC^0(\RR))$ with $T_*(v_0)\in(0,\infty]$, generated by the the initial data $u\id{t=0}=\uu+v_0$ satisfies for any ${0\leq t<T_*(v_0)}$
\[
\Norm{u-\uu}_{L^{\infty}(\RR)} \leq \Norm{v_0}_{L^{\infty}(\RR)}
C_0\,e^{g'(\uu)\,t}
\]
and if moreover $\d_xv_0\in L^1(\RR)$
\[\Norm{\d_xu(t,\cdot)}_{L^1(\RR)} \leq \Norm{\d_xv_0}_{L^1(\RR)} 
C_0\,e^{g'(\uu)\,t}\,.\]
\end{Proposition}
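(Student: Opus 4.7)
The plan is to reduce to an equation for the perturbation $v := u - \uu$ and to exploit the scalar structure via the method of characteristics. Since $g(\uu) = 0$ and $g \in \cC^2$, I would write $g(\uu+v) = g'(\uu)\,v + h(v)$ with $|h(v)|\leq C_g |v|^2$ for $|v|$ bounded (with $C_g$ depending only on $g$ in a neighbourhood of $\uu$), so that $v$ obeys $\d_t v + f'(\uu+v)\,\d_x v = g'(\uu)\,v + h(v)$. On $[0,T_*(v_0))$, the characteristics $X(\cdot\,;x_0)$ defined by $\dot X = f'(u(t,X))$, $X(0)=x_0$ are well defined $\cC^1$ curves and, by the preservation of $BUC^1$ regularity of $u$, foliate the strip bijectively.

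For the $L^\infty$ bound I would note that, along such a characteristic, the PDE reduces to the ODE $\frac{\dd}{\dd t}v(t,X(t;x_0)) = g(\uu+v(t,X(t;x_0)))$. Duhamel's formula then yields
\[
e^{-g'(\uu)t}\,|v(t,X(t;x_0))|\;\leq\;|v_0(x_0)|+C_g\int_0^t e^{-g'(\uu)s}\,|v(s,X(s;x_0))|^2\,\dd s.
\]
Setting $N(t):=e^{-g'(\uu)t}\|v(t,\cdot)\|_{L^\infty}$ and taking suprema over $x_0$, this turns into a Gronwall-type quadratic integral inequality that closes by a standard continuity (bootstrap) argument into $N(t)\leq C_0\,\|v_0\|_{L^\infty}$ provided, say, $\epsilon\leq (C_0-1)\,|g'(\uu)|/(C_g\,C_0^2)$; this yields the claimed $L^\infty$ decay, and crucially $f$ never appears in this step.

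For the $L^1$ bound on $\d_x u$ I would combine the equation for $w:=\d_x u$, namely $\d_t w + f'(u)\d_x w = (g'(u)-f''(u)w)\,w$ (obtained by differentiating~\eqref{eq-u}), with the Jacobian ODE $\dot J = f''(u)\,w\,J$ for $J(t;x_0):=\d_{x_0}X(t;x_0)$. The cross terms involving $f''$ cancel, leaving the striking identity
\[
\frac{\dd}{\dd t}(wJ)(t;x_0) \;=\; g'(u(t,X(t;x_0)))\,(wJ)(t;x_0),
\]
so $(wJ)(t;x_0) = (\d_x v_0)(x_0)\exp\!\big(\int_0^t g'(u(s,X(s;x_0)))\,\dd s\big)$. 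Integrating absolute values in $x_0$ and pushing forward by $x = X(t;x_0)$ (whose Jacobian is exactly $J$), one gets $\|w(t,\cdot)\|_{L^1} \leq \|\d_x v_0\|_{L^1}\sup_{x_0}\exp(\int_0^t g'(u)\,\dd s)$; combining with the previously established $L^\infty$ control and the Lipschitz bound $|g'(u)-g'(\uu)|\leq C|v|$, the exponent is $g'(\uu)t+O(\epsilon)$, producing the announced decay after possibly shrinking $\epsilon$ once more.

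The main technical point is closing the bootstrap for the $L^\infty$ estimate so that the smallness threshold $\epsilon$ depends on none of the $\cC^2$-data associated to $f$, as the uniform statement demands. This uniformity is made possible by the structural observation that $f$ only shapes the characteristic curves but never the reaction ODE governing the decay of $v$ along them. A conceptually more robust route (apparently the one pursued in the paper, given the discussion in the introduction) would be to set up resolvent estimates for the variable-coefficient linearization and deploy an evolution-system Duhamel formula in the spirit of~\cite{Pazy}; the characteristic-based plan sketched above is tailored to the constant-background setting at hand and seems the most direct.
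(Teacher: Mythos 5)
Your proposal is correct, but it takes a genuinely different route from the paper's. The paper deliberately avoids characteristics: it writes the perturbation equation in mild form,
\[
v(t,\cdot)\,=\,\cS_{f'(\uu+v),\,g'(\uu)}(0,t)\,v_0
+\int_0^t \cS_{f'(\uu+v),\,g'(\uu)}(s,t)\big(g(\uu+v)-g(\uu)-g'(\uu)v\big)(s,\cdot) \,\dd s\,,
\]
where $\cS_{a,b}$ is the evolution system generated by the time-dependent operators $-a\,\d_x+b$, whose $L^\infty\to L^\infty$ and $\dot W^{1,1}\to\dot W^{1,1}$ bounds are extracted from frozen-time resolvent estimates (Lemma~\ref{l:resolvent} and Proposition~\ref{P.estimate-semilinear-constant}); both conclusions then follow from the same quadratic Gronwall/bootstrap scheme you use, and the independence of $\epsilon$ from $f$ is visible there because the evolution-system bounds involve only $b=g'(\uu)$. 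Your characteristic-based argument is more elementary and self-contained for this constant-background case, and the introduction of the paper explicitly concedes that the result ``could be proved \dots with classical characteristics''; the authors prefer the resolvent/evolution-system machinery because it is the framework that survives in the settings they ultimately target (sonic points, higher dimensions, systems), where characteristics are less pleasant. Two minor points on your write-up, neither of which invalidates it. First, you derive $\tfrac{\dd}{\dd t}(wJ)=g'(u)\,wJ$ by differentiating the PDE in $x$, which formally uses $\d_x w=\d_x^2u$, not available when $u(t,\cdot)$ is merely $BUC^1$; the identity is nonetheless correct and is obtained rigorously by observing that $U(t;x_0)\eqdef u(t,X(t;x_0))$ solves the reaction ODE $\dot U=g(U)$ and differentiating \emph{that} ODE with respect to the parameter $x_0$ (legitimate for $v_0\in\cC^1$, $g\in\cC^2$ and a flow $\cC^1$ in $x_0$), since $\d_{x_0}U=wJ$. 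Second, to close the continuity argument the threshold inequality on $\epsilon$ should be strict (or one should leave a factor of slack as the paper does with its $2\epsilon$ ansatz), since otherwise the bootstrap set is closed but not obviously open.
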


The foregoing proposition is a \emph{conditional} asymptotic stability result. Proximity is guaranteed only as long as the solution persists as a classical solution. A strong sign that the result tells nothing about persistence of regularity is that the required smallness is independent of $f$ and does not involve derivatives of $v_0$. This should be contrasted with the explicit example discussed in the introduction. 

In a framework involving a smallness condition with more regularity, one may prove

\begin{Proposition}\label{P.constant-classical}
Let $f,\,g\in \cC^2(\RR)$ and $\uu\in\RR$ be such that 
\[
g(\uu)=0\qquad\textrm{and}\qquad g'(\uu)<0\,.
\]
Then for any $C_0>1$, there exists $\epsilon>0$ such that for any $v_0\in BUC^1(\RR)$ satisfying 
\[\Norm{v_0}_{W^{1,\infty}(\RR)}\leq \epsilon\,,\]
the initial data $u\id{t=0}=\uu+v_0$ generates a global unique classical solution to~\eqref{eq-u}, ${u\in BUC^1(\RR^+\times\RR)}$, and it satisfies for any $t\geq0$
\begin{align*}
\Norm{u(t,\cdot)-\uu}_{L^{\infty}(\RR)}&\leq \Norm{v_0}_{L^{\infty}(\RR)}C_0\,e^{g'(\uu)\,t}\ ;\\
\Norm{\d_x u(t,\cdot)}_{L^{\infty}(\RR)}&\leq \Norm{\d_x v_0}_{L^{\infty}(\RR)} C_0\,e^{g'(\uu)\,t}\,.
\end{align*}
\end{Proposition}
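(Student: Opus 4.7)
The plan is to combine Proposition~\ref{P.constant-conditional}, which already controls $\Norm{u-\uu}_{L^\infty(\RR)}$ along the maximal classical existence interval $[0,T_*(v_0))$, with a Lagrangian analysis of the derivative $w\eqdef\d_x u$ along characteristics, and to close a bootstrap argument that simultaneously delivers the $L^\infty$ estimate on $\d_x u$ and forces globality $T_*(v_0)=+\infty$.

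First I would recall standard local theory: propagation along the characteristics $\dot X=f'(u(t,X))$ together with a contraction argument in $BUC^1$ produces a unique maximal classical solution with regularity as in Proposition~\ref{P.constant-conditional}, together with the continuation criterion that if $T_*(v_0)<+\infty$ then $\limsup_{t\nearrow T_*(v_0)}\Norm{u(t,\cdot)-\uu}_{W^{1,\infty}(\RR)}=+\infty$. For $\epsilon$ sufficiently small, Proposition~\ref{P.constant-conditional} applies on $[0,T_*(v_0))$ and yields the first announced estimate; only the $L^\infty$ estimate on $\d_x u$ and the globality of the solution remain to be proved.

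Differentiating~\eqref{eq-u} in $x$ gives $\d_t w+f'(u)\,\d_x w=\bigl(g'(u)-f''(u)\,w\bigr)\,w$. Evaluating along the characteristic $X$ issued from $x_0$, the scalar quantity $\tilde w(t)\eqdef w(t,X(t))$ satisfies the Riccati-type ODE $\dot{\tilde w}=H(t)\,\tilde w$ with $H(t)\eqdef g'(u(t,X(t)))-f''(u(t,X(t)))\,\tilde w(t)$, so that $\tilde w(t)=\d_x v_0(x_0)\,\exp\!\Bigl(\int_0^t H(s)\,\dd s\Bigr)$. I would then introduce the bootstrap time
\[
T^{\dag}\eqdef\sup\!\Bigl\{\,T\in[0,T_*(v_0))\,:\ \Norm{\d_x u(t,\cdot)}_{L^\infty(\RR)}\leq C_0\,\Norm{\d_x v_0}_{L^\infty(\RR)}\,e^{g'(\uu)\,t}\ \text{for all }t\in[0,T]\,\Bigr\},
\]
which satisfies $T^{\dag}>0$ by continuity and the strict inequality $C_0>1$. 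On $[0,T^{\dag}]$, letting $L$ bound $|g''|$ and $|f''|$ on a neighborhood of $\uu$, the $L^\infty$ estimates on $u-\uu$ and on $\d_x u$ give uniformly in $x_0\in\RR$
\[
|H(s)-g'(\uu)|\leq L\bigl(\Norm{u(s,\cdot)-\uu}_{L^\infty(\RR)}+\Norm{\d_x u(s,\cdot)}_{L^\infty(\RR)}\bigr)\leq 2\,L\,C_0\,\epsilon\,e^{g'(\uu)\,s},
\]
so that $\int_0^{T^{\dag}}|H(s)-g'(\uu)|\,\dd s\leq 2\,L\,C_0\,\epsilon/|g'(\uu)|\eqdef\eta(\epsilon)$ with $\eta(\epsilon)\to 0$ as $\epsilon\to 0$. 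Choosing $\epsilon$ small enough that $e^{\eta(\epsilon)}<C_0$ (possible since $C_0>1$) strictly improves the bootstrap inequality, hence forces $T^{\dag}=T_*(v_0)$; since $\Norm{u-\uu}_{W^{1,\infty}(\RR)}$ then stays uniformly bounded in time, the continuation criterion yields $T_*(v_0)=+\infty$.

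The main obstacle, and the reason why smallness of $\d_x v_0$ must be imposed (and not only smallness of $v_0$, as in Proposition~\ref{P.constant-conditional}), is precisely the quadratic term $-f''(u)\,w^2$ in the Riccati equation for $w$: for large initial derivatives and unfavorable sign of $f''$, this term drives shock formation in finite time, as already illustrated by the Burgers-type example in the introduction. Once smallness of $\d_x v_0$ is assumed, the dissipation encoded in $g'(\uu)<0$ dominates the quadratic correction along each characteristic, and the bootstrap closes with the sharp rate $e^{g'(\uu)\,t}$.
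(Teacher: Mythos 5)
Your proof is correct, and the place where it differs from the paper is one of technology rather than substance. The paper omits the proof of Proposition~\ref{P.constant-classical} as ``completely similar'' to that of Proposition~\ref{P.constant-conditional}: there, and in the proof of Proposition~\ref{P.constant}, the solution and its derivative are represented through the evolution system $\cS_{a,b}$ of Proposition~\ref{P.estimate-semilinear-constant} --- in particular $\d_xv(t,\cdot)=\cS_{f'(\uu+v),\,g'(\uu+v)-f''(\uu+v)\d_xv}(0,t)\,\d_xv_0$ --- whose $L^\infty\to L^\infty$ bound $e^{\int_s^t\sup_\RR b(\tau,\cdot)\,\dd\tau}$ is obtained from the resolvent estimates of Lemma~\ref{l:resolvent} and the abstract theory of evolution systems. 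Your Lagrangian computation is exactly the concrete realization of that bound: the equation $\d_t w+f'(u)\d_x w=(g'(u)-f''(u)w)w$ is the same linear (in $w$) transport--reaction equation with coefficient $b=g'(u)-f''(u)\d_xu$, and integrating $\dot{\tilde w}=H\tilde w$ along characteristics reproduces the factor $\exp\bigl(\int_0^t\sup_\RR b\bigr)$. The paper explicitly acknowledges (in a footnote of the introduction) that the result could be proved ``with classical characteristics and comparison principles'' but deliberately opts for the semigroup/evolution-system formulation so as to stay close to classical stability theory and to prepare the ground for the companion paper where sonic points make characteristics delicate. Your route is more elementary and self-contained for this particular statement; the paper's buys uniformity of method across the later, harder results. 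Two small points worth making explicit in a write-up: the flow of $\dot X=f'(u(t,X))$ is a homeomorphism of $\RR$ at each time of the existence interval (so the supremum over $x$ equals the supremum over starting points $x_0$), and $\epsilon$ must be taken small enough that both the $L^\infty$ conclusion of Proposition~\ref{P.constant-conditional} with the prescribed $C_0$ and the condition $e^{\eta(\epsilon)}<C_0$ hold simultaneously; both are immediate.
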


Assuming local convexity/concavity, one may relax part of the foregoing smallness condition

\begin{Proposition}\label{P.constant}
Let $f,\,g\in \cC^2(\RR)$ and $\uu\in\RR$ be such that
\[g(\uu)=0\,,\qquad g'(\uu)<0\qquad\textrm{and}\qquad f''(\uu)\neq0\,.\]
Then for any $C_0>1$, there exists $\epsilon>0$ such that for any $v_0\in BUC^1(\RR)$ satisfying 
\[\Norm{v_0}_{L^{\infty}(\RR)}\leq \epsilon
\qquad\textrm{and}\qquad \Norm{(\sign(f''(\uu))\,\d_xv_0)_-}_{L^{\infty}(\RR)}\,\leq\epsilon\,,
\]
the initial data $u\id{t=0}=\uu+v_0$ generates a global unique classical solution to~\eqref{eq-u}, ${u\in BUC^1(\RR^+\times\RR)}$, and it satisfies for any $t\geq0$
\begin{align*}
\Norm{u(t,\cdot)-\uu}_{L^{\infty}(\RR)}&\leq \Norm{v_0}_{L^{\infty}(\RR)} C_0\,e^{g'(\uu)\,t}\,,\\
\Norm{(\sign(f''(\uu))\,\d_x u(t,\cdot))_-}_{L^{\infty}(\RR)}&\leq \Norm{(\sign(f''(\uu))\,\d_xv_0)_-}_{L^{\infty}(\RR)} C_0\,e^{g'(\uu)\,t}\,,\\
\Norm{\d_x u(t,\cdot)}_{L^{\infty}(\RR)}&\leq \Norm{\d_x v_0}_{L^{\infty}(\RR)} C_0\,e^{g'(\uu)\,t}\,.
\end{align*}
\end{Proposition}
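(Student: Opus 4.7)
The plan is to combine the $L^\infty$ decay of $v := u-\uu$ provided by Proposition~\ref{P.constant-conditional} with a pointwise analysis of the Bernoulli-type ODE satisfied by $\d_x u$ along characteristics, exploiting the sign of $f''(\uu)$ to control only the unfavorable direction of the initial derivative.

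First I would invoke standard local well-posedness of the quasilinear equation $\d_t v + f'(\uu+v)\d_x v = g(\uu+v)$ in $BUC^1(\RR)$, which produces a maximal classical solution on $[0,T_*)$. Since Proposition~\ref{P.constant-conditional} imposes a smallness threshold on $\Norm{v_0}_{L^{\infty}(\RR)}$ alone, for $\epsilon$ sufficiently small it applies throughout $[0,T_*)$ and delivers the first announced estimate, with a bound on $v$ that is independent of $\Norm{\d_x v_0}_{L^{\infty}(\RR)}$. Differentiating the equation and following $w := \d_x u$ along the characteristics $X'(t) = f'(\uu+v(t,X(t)))$ then yields the scalar ODE
\[
\frac{\dd}{\dd t}w(t,X(t)) = g'(\uu+v)\,w - f''(\uu+v)\,w^2\,.
\]
Setting $s := \sign(f''(\uu))$, $\widetilde w := s\,w$, $c(t) := g'(\uu+v(t,X(t)))$ and $d(t) := s\,f''(\uu+v(t,X(t)))$, this becomes $\widetilde w\,' = c(t)\,\widetilde w - d(t)\,\widetilde w^2$, where for $\epsilon$ small enough the $L^\infty$ bound on $v$ ensures $c(t) \leq g'(\uu)/2 < 0$ and $d(t) > 0$ uniformly, while $\int_0^t|c-g'(\uu)|\,\dd\tau$ and $\int_0^\infty d(\tau)\,e^{\int_0^\tau c}\,\dd\tau$ are controlled by constants arbitrarily close to zero, respectively to $|f''(\uu)|/|g'(\uu)|$.

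By uniqueness for this scalar ODE, the sign of $\widetilde w$ is preserved along each characteristic, so $\widetilde w_+ = (s\,\d_x u)_+$ and $\widetilde w_- = (s\,\d_x u)_-$ can be handled separately. When $\widetilde w \geq 0$, the nonlinear term is favorable, so $\widetilde w\,' \leq c(t)\,\widetilde w$ and Gr\"onwall combined with the bound on $c-g'(\uu)$ yields $\widetilde w_+(t,X(t)) \leq \widetilde w_+(0,X(0))\,C_0\,e^{g'(\uu)\,t}$. When $\widetilde w < 0$, I would set $\widetilde z := -\widetilde w > 0$ so that $\widetilde z\,' = c(t)\,\widetilde z + d(t)\,\widetilde z^2$; the substitution $\widetilde z = 1/y$ linearizes this and provides the explicit formula
\[
\widetilde z(t) = \frac{\widetilde z(0)\,e^{\int_0^t c}}{1 - \widetilde z(0)\,\int_0^t d(\tau)\,e^{\int_0^\tau c}\,\dd\tau}\,,
\]
whose denominator remains close to $1$ as long as $\widetilde z(0)$ is chosen well below the unstable equilibrium $|c|/d \approx |g'(\uu)|/|f''(\uu)|$, yielding $\widetilde z(t,X(t)) \leq \widetilde z(0,X(0))\,C_0\,e^{g'(\uu)\,t}$.

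Taking suprema over characteristics delivers the second and third announced bounds. Since $\Norm{\d_x u(t,\cdot)}_{L^{\infty}(\RR)}$ thereby stays bounded on $[0,T_*)$, the standard blow-up criterion for quasilinear transport equations forces $T_* = +\infty$. The main obstacle is the analysis of the Bernoulli ODE in the unfavorable direction: one must ensure that $\widetilde z$ stays well below its unstable equilibrium uniformly along all characteristics and for all times, which requires a single smallness threshold $\epsilon$ absorbing simultaneously the hypotheses of Proposition~\ref{P.constant-conditional}, the $v$-dependent perturbation of the coefficients $c,d$, and the prescribed multiplicative constant $C_0$. This calibration is possible only because Proposition~\ref{P.constant-conditional} controls $\Norm{v(t,\cdot)}_{L^{\infty}(\RR)}$ independently of $\Norm{\d_x v_0}_{L^{\infty}(\RR)}$, so that the coefficient perturbations remain small even when $(s\,\d_x v_0)_+$ is large.
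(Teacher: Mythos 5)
Your argument is correct, but it follows a genuinely different route from the paper's. The paper works with the differentiated equation viewed as a \emph{linear} problem for $\d_x v$ with solution-dependent coefficients: it writes $\d_xv(t,\cdot)=\cS_{f'(\uu+v),\,g'(\uu+v)-f''(\uu+v)\d_xv}(0,t)\,\d_xv_0$, uses linearity and the positivity preservation of the evolution system (Proposition~\ref{P.estimate-semilinear-constant}) to get $(\sign(f''(\uu))\d_xv(t,\cdot))_-\leq\cS(0,t)\,(\sign(f''(\uu))\d_xv_0)_-$, and then closes a Gr\"onwall-type bootstrap in which the quadratic term only enters through $\sup_\RR b$ with $b=g'(\uu+v)-f''(\uu+v)\d_xv$ --- an upper bound controlled by the \emph{negative} part of $\sign(f''(\uu))\d_xv$ alone, which is exactly why only that part of the initial derivative must be small. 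You instead integrate the Riccati equation for $\d_xu$ pointwise along characteristics, use sign preservation to split into the favorable branch (where the quadratic term damps) and the unfavorable branch (where the explicit Bernoulli formula shows the solution stays below the blow-up threshold of order $|g'(\uu)|/|f''(\uu)|$). The two proofs exploit the same structural facts --- the derivative-free $L^\infty$ control from Proposition~\ref{P.constant-conditional}, the persistence of the sign of $f''$ near $\uu$, and the one-sidedness of the dangerous quadratic contribution --- but your version yields the explicit threshold and avoids the bootstrap on the derivative, at the price of having to justify that $\d_xu$ is differentiable along characteristics for a solution that is only $BUC^1$ (a classical but nontrivial point, in the spirit of the characteristics-based alternative the paper itself mentions via~\cite{Li94}), while the paper's version stays within the evolution-system formalism it sets up for the whole article and transfers more directly to the multidimensional statements of Section~\ref{s:multiD}. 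Your calibration discussion at the end (a single $\epsilon$ absorbing the threshold of Proposition~\ref{P.constant-conditional}, the coefficient perturbations, and the prescribed $C_0$) is exactly the right closing step and matches the paper's choice of $\epsilon$ so that $e^{\epsilon(2C_f+C_0C_g)/|g'(\uu)|}\leq\min(\{2,C_0\})$.
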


By a classical approximation/compactness argument one then deduces

\begin{Corollary}\label{C.constant}
Let $f,\,g\in \cC^2(\RR)$ and $\uu\in\RR$ be such that
\[g(\uu)=0\,,\qquad g'(\uu)<0\qquad\textrm{and}\qquad f''(\uu)\neq0\,.\]
Then  for any $C_0>1$, there exists $\epsilon>0$ such that for any $v_0\in BV_{loc}(\RR)\cap L^\infty(\RR)$ such that $(\sign(f''(\uu))\,\d_xv_0)_-\in L^\infty(\RR)$ and
\[\Norm{v_0}_{L^{\infty}(\RR)}\leq \epsilon
\qquad\textrm{and}\qquad \Norm{(\sign(f''(\uu))\,\d_xv_0)_-}_{L^{\infty}(\RR)}\,\leq\epsilon\,,
\]
the initial data $u\id{t=0}=\uu+v_0$ generates a global unique entropy solution to~\eqref{eq-u} and it satisfies for a.e. $t\geq0$
\begin{align*}
\Norm{u(t,\cdot)-\uu}_{L^{\infty}(\RR)}&\leq \Norm{v_0}_{L^{\infty}(\RR)} C_0\,e^{g'(\uu)\,t}\,,\\
\Norm{(\sign(f''(\uu))\,\d_x u(t,\cdot))_-}_{L^{\infty}(\RR)}&\leq \Norm{(\sign(f''(\uu))\,\d_xv_0)_-}_{L^{\infty}(\RR)} C_0\,e^{g'(\uu)\,t}\,,
\end{align*}
and if moreover $v_0\in BV(\RR)$
\[
\Norm{u(t,\cdot)}_{TV(\RR)}\,\leq\,\Norm{v_0}_{TV(\RR)} C_0\,e^{g'(\uu)\,t}\,,
\]
while if $\d_xv_0\in L^\infty(\RR)$
\[
\Norm{\d_xu(t,\cdot)}_{L^\infty(\RR)}\,\leq\,\Norm{\d_xv_0}_{L^\infty(\RR)} C_0\,e^{g'(\uu)\,t}\,.
\]
\end{Corollary}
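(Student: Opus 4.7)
The plan is to apply Proposition~\ref{P.constant} to a smoothed version of $v_0$ and to transfer the resulting estimates to the entropy solution issued from $v_0$ using Kruzhkov's $L^1$-stability theory. Pick $C_0'\in(1,C_0)$, let $\epsilon_0$ be the smallness threshold provided by Proposition~\ref{P.constant} for that $C_0'$, and set $\epsilon:=\epsilon_0$. Given $v_0$ satisfying the hypotheses, let $(\rho_n)_{n\in\NN}$ be a non-negative smooth mollifier of unit mass supported in $(-1/n,1/n)$, and set $v_0^n:=v_0*\rho_n\in BUC^\infty(\RR)$. Basic properties of mollification give $\Norm{v_0^n}_{L^\infty(\RR)}\leq\Norm{v_0}_{L^\infty(\RR)}\leq\epsilon$ and $v_0^n\to v_0$ almost everywhere, as well as, in the respective situations, $\Norm{\d_x v_0^n}_{L^1(\RR)}\leq\Norm{v_0}_{TV(\RR)}$ and $\Norm{\d_x v_0^n}_{L^\infty(\RR)}\leq\Norm{\d_x v_0}_{L^\infty(\RR)}$.

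The asymmetric derivative smallness is preserved via a Jensen-type inequality: decomposing the Radon measure $\sign(f''(\uu))\,\d_x v_0=\alpha-\beta$ into mutually singular non-negative parts, the hypothesis is that $\beta$ is absolutely continuous with density bounded by $\Norm{(\sign(f''(\uu))\d_x v_0)_-}_{L^\infty(\RR)}$; since $\alpha*\rho_n\geq0$ one obtains the pointwise bound $(\sign(f''(\uu))\d_x v_0^n)_-\leq\beta*\rho_n$, whence
\[
\Norm{(\sign(f''(\uu))\,\d_x v_0^n)_-}_{L^\infty(\RR)}\leq\Norm{(\sign(f''(\uu))\,\d_x v_0)_-}_{L^\infty(\RR)}\leq\epsilon.
\]
Proposition~\ref{P.constant} with constant $C_0'$ then yields global classical solutions $u^n\in BUC^1(\RR^+\times\RR)$ to~\eqref{eq-u} issued from $\uu+v_0^n$ and satisfying the three pointwise estimates of that proposition. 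Since the $u^n$ are uniformly bounded and $g$ is locally Lipschitz, Kruzhkov's $L^1$-stability theorem ensures that $(u^n)$ is Cauchy in $\cC^0([0,T];L^1_{loc}(\RR))$ for every $T>0$, with limit the unique entropy solution $u$ issued from $\uu+v_0$. The first two estimates of the corollary then follow for almost every $t\geq0$ by lower semicontinuity of the $L^\infty$ norm under almost everywhere convergence of $u^n$, and of the one-sided $L^\infty$ seminorm under weak-$*$ convergence of $\d_x u^n$; the $L^\infty$ bound on $\d_x u$ is recovered in the same fashion.

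The total-variation estimate is not directly contained in Proposition~\ref{P.constant} and must be derived separately at the smooth level. The derivative $w^n:=\d_x u^n$ satisfies the linear balance law $\d_t w^n+\d_x(f'(u^n)w^n)=g'(u^n)\,w^n$; since $\Norm{w^n(0,\cdot)}_{L^1(\RR)}\leq\Norm{v_0}_{TV(\RR)}$ and $w^n(t,\cdot)\in L^1(\RR)\cap L^\infty(\RR)$ decays at infinity (by propagation of integrability for a linear transport equation with smooth coefficients), integrating in $x$ against $\sign(w^n)$, suitably regularized, produces
\[
\frac{\dd}{\dd t}\Norm{w^n(t,\cdot)}_{L^1(\RR)}\leq \big(g'(\uu)+C_1\Norm{u^n(t,\cdot)-\uu}_{L^\infty(\RR)}\big)\Norm{w^n(t,\cdot)}_{L^1(\RR)}.
\]
The exponential decay of $\Norm{u^n-\uu}_{L^\infty}$ delivered by Proposition~\ref{P.constant} makes the correction integrable in $t$, so Gronwall together with $\epsilon$ small enough yields $\Norm{w^n(t,\cdot)}_{L^1(\RR)}\leq\Norm{v_0}_{TV(\RR)}\,C_0\,e^{g'(\uu)\,t}$; lower semicontinuity of total variation under $L^1_{loc}$ convergence then concludes. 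The only genuine subtlety in the argument is the preservation of the one-sided derivative smallness under regularization, handled by the Jensen-type inequality employed above; the remainder is routine density-stability, Gronwall and semi-continuity.
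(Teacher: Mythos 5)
Your proof is correct and follows the same overall strategy as the paper's: regularize $v_0$ by convolution with positive kernels into data meeting the hypotheses of Proposition~\ref{P.constant}, then pass to the limit. Two implementation choices differ. First, you identify the limit via Kru\v zkov's $L^1$-contraction, which gives convergence of the whole approximating sequence to \emph{the} entropy solution in one stroke, whereas the paper extracts a pointwise convergent, uniformly bounded subsequence and passes to the limit in the entropy weak formulation (citing~\cite[Section~6.2]{Bressan}); both are standard, and your route is somewhat cleaner on the identification/uniqueness side. Second, your separate derivation of the total-variation bound via the equation satisfied by $w^n=\d_x u^n$ is sound but redundant: Proposition~\ref{P.constant-conditional} already records exactly the decay estimate $\Norm{\d_x u^n(t,\cdot)}_{L^1(\RR)}\leq \Norm{\d_x v_0^n}_{L^1(\RR)}\,C_0\,e^{g'(\uu)\,t}$ for classical solutions with $\d_x v_0^n\in L^1(\RR)$, and the paper simply invokes it together with $\Norm{\d_x v_0^n}_{L^1(\RR)}\leq\Norm{v_0}_{TV(\RR)}$ and lower semicontinuity of the total variation under $L^1_{loc}$ convergence. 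Your explicit Jordan-decomposition argument for the persistence of the one-sided derivative bound under mollification is a detail the paper leaves implicit, and you handle it correctly.
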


To enlighten the content of Corollary~\ref{C.constant}, we stress that it allows discontinuous initial data generating small rarefaction waves but not shocks. This does not mean that a similar result cannot hold when small shocks are present but simply that in general, as the explicit example of the introduction shows, they cannot be obtained by a limiting process building on global classical solutions. This is consistent with expectations drawn from general theory, see for instance~\cite[Chapter~9, Problem~6]{Bressan}.

\begin{Remark}
An examination of proofs shows that one may relax everywhere the assumption that $g\in\cC^2$. It is sufficient that $g\in\cC^1$ and that the modulus of continuity 
\[
\omega(r)\,=\,\max_{|u-\uu|\leq r}\|g'(u)-g'(\uu)\|
\]
is such that $r\mapsto \omega(r)/r$ is locally integrable. This includes the case when $g\in\cC^{1,\alpha}$, $\alpha>0$. Indeed the key property is that for any positive $C$ and $\theta$
\[
\int_0^\infty \omega(C\,\varepsilon\,e^{-\theta\,t})\,\dd t
\,=\,\frac{1}{\theta}\int_{0}^{C\,\varepsilon}\,\frac{\omega(r)}{r}\,\dd r\ 
\stackrel{\varepsilon\to0^+}{\longrightarrow}\ 0\,.
\]
\end{Remark}

The exponential decay in time also holds for higher order derivatives without further restriction on sizes of perturbations.
\begin{Proposition}\label{P.high-order} Under the assumptions of either Proposition~\ref{P.constant-classical} or Proposition~\ref{P.constant}, if one assumes additionally that $f\in\cC^{k+1}(\RR)$, $g\in \cC^k(\RR)$ with $k\in\NN$, $k\geq 2$ then there exists $C_k>0$, depending on $f$, $g$ and $k$ but not on the initial data $v_0$, such that if $v_0\in BUC^k(\RR)$ additionally to constraints in either Proposition~\ref{P.constant-classical} or Proposition~\ref{P.constant}, then the global unique classical solution to~\eqref{eq-u} emerging from the initial data $\uu+v_0$ satisfies $u\in BUC^k(\RR^+\times\RR)$ and for any $t\geq0$
\[
\Norm{\d_x^k u(t,\cdot)}_{L^{\infty}(\RR)}\leq 
\Norm{\d_x^k v_0}_{L^{\infty}(\RR)}e^{C_k\,\|v_0\|_{W^{1,\infty}}\,(1+\|v_0\|_{W^{1,\infty}}^{k-1})}\,e^{g'(\uu)\,t}\,.
\]
\end{Proposition}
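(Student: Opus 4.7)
I would proceed by induction on $k\geq 2$, the case $k=1$ being contained in Proposition~\ref{P.constant-classical} or Proposition~\ref{P.constant}. Setting $w_j:=\d_x^j u$, repeated Leibniz and Fa\`a di Bruno differentiations of $\d_t u+f'(u)\d_x u=g(u)$ produce the transport equation
\[
\d_t w_k+f'(u)\,\d_x w_k\,=\,\big(g'(u)-(k+1)\,f''(u)\,w_1\big)\,w_k+\widetilde N_k\,,
\]
where the source $\widetilde N_k$ is a polynomial in $(w_1,\ldots,w_{k-1})$ with coefficients smooth in $u$ and each of whose monomials $\prod_i w_{j_i}^{n_i}$ satisfies $j_i\leq k-1$, $P:=\sum_i n_i\geq 2$, and total weight $W:=\sum_i j_in_i\in\{k,k+1\}$. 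Persistence of $BUC^k$ regularity and global existence will then follow, by a standard continuation argument, from a uniform bound on $\|w_k(t)\|_{L^\infty}$.

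Integrating along the characteristic $\dot X=f'(u(t,X))$ yields the Duhamel formula with propagator $e^{\Psi(t)-\Psi(s)}$, where $\Psi(t):=\int_0^t[g'(u)-(k+1)f''(u)w_1]\,\dd s$. The exponential decay $\|u(s)-\uu\|_{L^\infty}+\|w_1(s)\|_{L^\infty}\leq C\,\|v_0\|_{W^{1,\infty}}\,e^{g'(\uu)s}$ furnished by Proposition~\ref{P.constant-classical} or~\ref{P.constant} implies $\Psi(t)=g'(\uu)\,t+O(\|v_0\|_{W^{1,\infty}})$ uniformly in $t$, so the homogeneous contribution is bounded by $\|\d_x^kv_0\|_{L^\infty}\,e^{O(\|v_0\|_{W^{1,\infty}})}\,e^{g'(\uu)t}$. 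Invoking the induction hypothesis $\|w_j(s)\|_{L^\infty}\leq \|\d_x^jv_0\|_{L^\infty}\,e^{O(\|v_0\|_{W^{1,\infty}})}\,e^{g'(\uu)s}$ for $j\leq k-1$, every monomial in $\widetilde N_k$ is majorized by $\prod_i\|\d_x^{j_i}v_0\|^{n_i}\,e^{Pg'(\uu)s}\,e^{O(\|v_0\|_{W^{1,\infty}})}$; since $P\geq 2$, the time integral $\int_0^t e^{g'(\uu)(t-s)}e^{Pg'(\uu)s}\,\dd s$ is at most $e^{g'(\uu)t}/((P-1)|g'(\uu)|)$.

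The technical heart of the argument is then to show, via Gagliardo-Nirenberg interpolations, that each product $\prod_i\|\d_x^{j_i}v_0\|^{n_i}$ is bounded by $\|\d_x^kv_0\|_{L^\infty}$ times a positive power of $\|v_0\|_{W^{1,\infty}}$. For weight-$k$ monomials, the standard inequality $\|\d_x^jv_0\|_{L^\infty}\leq C\|v_0\|_{L^\infty}^{1-j/k}\|\d_x^kv_0\|_{L^\infty}^{j/k}$ readily produces a factor $\|v_0\|_{L^\infty}^{P-1}\|\d_x^kv_0\|_{L^\infty}$. The weight-$(k+1)$ monomials---of which $w_2^2$ when $k=3$ is the paradigm---are the genuinely delicate case, for which one invokes the refined interpolation $\|\d_x^jv_0\|_{L^\infty}\leq C\|\d_x v_0\|_{L^\infty}^{(k-j)/(k-1)}\|\d_x^kv_0\|_{L^\infty}^{(j-1)/(k-1)}$ together with elementary consequences such as $\|\d_x v_0\|^k\leq C\|v_0\|^{k-1}\|\d_x^kv_0\|$ to extract the desired linear dependence on $\|\d_x^kv_0\|$ with a prefactor which is a positive power of $\|v_0\|_{W^{1,\infty}}$. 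The main obstacle is the combinatorial bookkeeping required to verify that in every case the contributions neatly repackage into the single exponential $\exp(C_k\|v_0\|_{W^{1,\infty}}(1+\|v_0\|_{W^{1,\infty}}^{k-1}))$ appearing in the statement.
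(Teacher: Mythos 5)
Your proposal is correct and rests on the same essential ingredients as the paper's proof --- the structure of the $k$-times differentiated equation (monomials of length $P\geq 2$ and total weight in $\{k,k+1\}$ in the derivatives of order $\leq k-1$, plus terms linear in $\d_x^k u$), the two Landau--Kolmogorov interpolation inequalities you quote, and the exponential decay of $\|v(s,\cdot)\|_{W^{1,\infty}}$ from Propositions~\ref{P.constant-classical} or~\ref{P.constant} --- but it organizes them differently. The paper does not induct on $k$: it writes a single Duhamel formula with the evolution system $\cS_{f'(\uu+v),\,g'(\uu)}$ of Proposition~\ref{P.estimate-semilinear-constant}, keeps all terms linear in $\d_x^k v$ (including $f''(u)\,\d_x v\,\d_x^k v$) among the sources, applies the interpolation inequalities to $v(s,\cdot)$ \emph{at each time $s$} so that every monomial is bounded by $C\,\|v(s,\cdot)\|_{W^{1,\infty}}^{m-1}\,\|\d_x^k v(s,\cdot)\|_{L^{\infty}}$, and closes with one Gr\"onwall inequality for $e^{-g'(\uu)t}\|\d_x^k v(t,\cdot)\|_{L^{\infty}}$; the exponent $C_k\|v_0\|_{W^{1,\infty}}(1+\|v_0\|_{W^{1,\infty}}^{k-1})$ then falls out of $\int_0^\infty\sum_{m}\|v_0\|_{W^{1,\infty}}^{m-1}e^{(m-1)g'(\uu)s}\,\dd s$. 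You instead absorb the quasilinear coupling $-(k+1)f''(u)w_1w_k$ into the propagator along characteristics and use the induction hypothesis to convert the $\|w_{j}(s)\|_{L^{\infty}}$ into initial-data quantities \emph{before} interpolating on $v_0$; this removes the need for Gr\"onwall at top order, at the price of the combinatorial bookkeeping you flag and of carrying the lower-order constants through the induction. One point to watch there: under the hypotheses of Proposition~\ref{P.constant} only $\|v_0\|_{L^{\infty}}$ and one signed part of $\d_x v_0$ are small, so your induction factors are of the form $e^{C_j\|v_0\|_{W^{1,\infty}}(1+\|v_0\|_{W^{1,\infty}}^{j-1})}$ rather than $e^{O(\|v_0\|_{W^{1,\infty}})}$ uniformly; they are nonetheless dominated by the claimed $k$-th order exponential since $x(1+x^{j-1})\leq x(1+x^{k-1})$ for $x>0$ and $j\leq k$, and since every such factor multiplies a term already carrying a positive power of $\|v_0\|_{W^{1,\infty}}$. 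The paper's ordering of the two steps (interpolate on $v(s,\cdot)$ first, Gr\"onwall after) is precisely what makes both the induction and this extra care unnecessary.
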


The local well-posedness theory for~\eqref{eq-u} at the various levels of regularity considered here is standard. Note in particular that in the foregoing statements without any further constraint uniqueness holds also on any finite time interval. Though we shall not repeat it henceforth this remark applies equally well to all our uniqueness claims. Thus the main upshots of Propositions~\ref{P.constant-classical} and~\ref{P.constant}, Corollary~\ref{C.constant} and Proposition~\ref{P.high-order} are global existence of classical solutions and exponential decay in time. For the classical well-posedness theory for scalar balance laws, due to Kru\v zkov, the reader is referred to~\cite{Kruzhkov} and\footnote{Unfortunately, as most of textbooks, for expository reasons~\cite[Chapter~6]{Bressan} restricts to conservation laws. Yet for local-in-time issues, such as well-posedness, changes needed to extend from conservation laws to balance laws are immaterial.}~\cite[Chapter~6]{Bressan}.

For our purposes it is expedient to introduce $v\eqdef u-\uu$ and as long as classical solutions are concerned work with the following quasilinear form of~\eqref{eq-u}
\begin{equation}\label{eq.linearized-constant}
\d_t v+f'(\uu+v)\d_x v-g'(\uu) v=g(\uu+ v)-g(\uu)-g'(\uu) v.
\end{equation}
Note in particular that in the above formulation one cannot allow any ``regularity loss" due to a linearization of the transport term. Bearing this in mind, prior to the consideration of a mild formulation of~\eqref{eq.linearized-constant} we analyze linear equations of the form 
\begin{equation}\label{eq.frozen-constant}
\d_t v+a\d_x v-bv=r
\end{equation}
where $a$ is close to $f'(\uu)$ and $b$ close to $g'(\uu)$ in a suitable sense. Let us anticipate that to deal with the mild formulation of~\eqref{eq.linearized-constant} and prove Propositions~\ref{P.constant-conditional} and~\ref{P.constant-classical} we could stick to the case where $b=g'(\uu)$. We shall use the extra flexibility in the choice of $b$ only when tracking asymmetric regularity involved in Proposition~\ref{P.constant} and Corollary~\ref{C.constant}.

As a preliminary let us discuss the linearized equation
\[
\d_t v+f'(\uu)\d_x v-g'(\uu) v=0\,.
\]
A notion of solution may be obtained through the classical semigroup formalism. For instance one may consider $L=-f'(\uu)\d_x+g'(\uu)$ on either $L^p(\RR)$, $1\leq p<\infty$, or $BUC^0(\RR)$ with domain $W^{1,p}(\RR)$ or $BUC^1(\RR)$, if $f'(\uu)\neq0$ and $L^p(\RR)$ or $BUC^0(\RR)$, otherwise.  The operator $L$ is then closed densely-defined with spectrum $g'(\uu)+i\,\RR$ if $f'(\uu)\neq0$, $\{g'(\uu)\}$ otherwise. In particular, $g'(\uu)>0$ would yield spectral instability whereas as follows from the analysis below suitable resolvent estimates show that $g'(\uu)<0$ provides linear asymptotic stability with exponential rates. We refer the reader to~\cite{Pazy,vanNeerven} for background on semigroups and their large-time behaviors.

It is already apparent here that though this does not alter significantly the stability properties, the vanishing of transport term impacts dramatically the regularity structure of the spectral problem. As long as we restrict to classical solutions near a constant steady state this is immaterial since going to a uniformly moving frame may remove possible vanishings. This would however not be possible near the continuous stable traveling fronts described in the introduction. In general the presence of an essential characteristic point is a serious cause of trouble, and the reader is referred to~\cite{JNRYZ,DR2} for an example of its impact on spectral problems.

As a consequence it is convenient to change coordinate frame. Explicitly for any $\sigma\in\RR$, by introducing $\tv$ through $\tv(t,x)=v(t,x+\sigma t)$ one replaces~\eqref{eq.frozen-constant} with 
\[
\d_t \tv+(\ta-\sigma)\d_x \tv-\tb\tv=\tr\,,
\]
with $(\ta,\tb,\tr)$ defined by $(\ta,\tb,\tr)(t,x)=(a,b,r)(t,x+\sigma t)$. Implicitly some of our assumptions on $a$ will build on the fact that one may choose $\sigma$ so that $\ta-\sigma$ is bounded away from zero.

\subsection{Linear equations}\label{s:linear}

To consider~\eqref{eq.frozen-constant} with time-dependent $a$ and $b$, we may either rely on or mimic the available abstract theory for evolution systems, as described in~\cite[Chapter~5]{Pazy}. In any case the needed elementary block is the solution of problems where $a$ and $b$ are independent of time. 

As a consequence we first consider this case. With this restriction we are back to the semigroup framework that may be analyzed directly by resolvent estimates. In the present section we always assume that $a\,,\ b\in BUC^0(\RR)$ with $a$ bounded away from zero. For such $a$, $b$, $L_{a,\,b}=-a\d_x+b$ is elliptic\footnote{Or, in a more standard terminology, $i\,L_{a,\,b}$ is elliptic.}, and is a closed, densely-defined operator on either $L^p(\RR)$ with domain $W^{1,p}(\RR)$, $1\leq p<\infty$, or on $BUC^0(\RR)$ with domain $BUC^1(\RR)$. The key basic estimate is 

\begin{Lemma}\label{l:resolvent}
Assume $a,\,b\in BUC^0(\RR)$ with $a$ bounded away from zero.\\
{\bf (i).} Then for any $\lambda\in\CC$ such that 
\[
\Re(\lambda)>\sup_\RR b(\cdot)\,,\] 
for any $F\in BUC^0(\RR)$, there exists a unique $\check{v}(\,\cdot\,;\lambda)\in BUC^1(\RR)$ such that 
\[
(\lambda-L_{a,\,b})\,\check{v}(\,\cdot\,;\lambda)\,=\,F
\]
and moreover
\[
\Norm{\check v(\,\cdot\,;\lambda)}_{L^\infty(\RR)}\leq \frac1{\Re\lambda-\sup_\RR b(\cdot)}\Norm{F}_{L^\infty(\RR)}\,.
\]
If $b$ is constant and $F\in W^{1,1}(\RR)$, then $\check{v}(\,\cdot\,;\lambda)\in W^{1,1}(\RR)$ and
\[
\Norm{\d_x\check v(\,\cdot\,;\lambda)}_{L^1(\RR)}\leq \frac1{\Re\lambda-b}\Norm{\d_xF}_{L^1(\RR)}\,.
\]
Moreover if $\lambda\in\RR$, $\lambda\in(\sup_\RR b(\cdot),\infty)$ and $F\geq0$ then $\check v(\,\cdot\,;\lambda)\geq0$.\\
{\bf (ii).} Assume moreover that 
\[
a\in BUC^1(\RR)\,,\qquad
b\textrm{ is constant}\,,\qquad%\textrm{ and }\qquad
\Re(\lambda)>b-\inf_\RR a'(\cdot)\,,
\]
and $F\in W^{1,\infty}(\RR)$. Then
\[
\Norm{\d_x\check v(\,\cdot\,;\lambda)}_{L^\infty(\RR)}\leq \frac1{\Re\lambda-b+\inf_\RR a'(\cdot)}\Norm{\d_xF}_{L^\infty(\RR)}\,.
\]
\end{Lemma}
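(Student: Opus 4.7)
The plan is to solve the first-order linear ODE $a(x)\,\check v'(x;\lambda)+(\lambda-b(x))\,\check v(x;\lambda)=F(x)$, which is just $(\lambda-L_{a,b})\check v=F$ rearranged, by the classical integrating-factor method, and to read all four claimed estimates off the resulting explicit representation. Since $a\in BUC^0(\RR)$ is bounded away from zero it has a definite sign, so I may assume $a>0$ and treat $a<0$ symmetrically by integrating from $+\infty$ rather than from $-\infty$.

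For part (i), setting $\Phi(x;\lambda)=\int_0^x (\lambda-b(z))/a(z)\,\dd z$ provides the integrating factor $\mu=e^\Phi$, whence $(\mu\,\check v)'=\mu F/a$. Because $\Re\Phi(x;\lambda)\to+\infty$ as $x\to+\infty$ (as $\Re\lambda-b$ is uniformly positive and $a$ bounded), the unique bounded solution is
\[
\check v(x;\lambda)\,=\,\int_{-\infty}^x \exp\!\left(-\int_y^x \frac{\lambda-b(z)}{a(z)}\,\dd z\right)\,\frac{F(y)}{a(y)}\,\dd y,
\]
uniqueness following because any two bounded solutions would differ by a multiple of $e^{-\Phi}$, which is unbounded as $x\to-\infty$. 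The $L^\infty$ bound then comes from the straightening change of variable $s(y)=\int_0^y \dd z/a(z)$: the exponential kernel becomes $\exp(-\int_{s(y)}^{s(x)}(\lambda-\tilde b(\tau))\,\dd\tau)$, bounded in modulus by $e^{-\kappa(s(x)-s(y))}$ with $\kappa=\Re\lambda-\sup_\RR b>0$, and an elementary convolution estimate yields the $(\Re\lambda-\sup_\RR b)^{-1}\|F\|_{L^\infty}$ bound. Positivity when $\lambda\in\RR$ lies above $\sup_\RR b$ and $F\geq0$ is immediate from the pointwise non-negativity of the integrand. For the $L^1$ bound on $\d_x\check v$ under the additional assumption that $b$ is constant, I work in the straightened variable: the equation reads $\d_s\tilde v+(\lambda-b)\tilde v=\tilde F$; differentiating gives the same equation for $\tilde w=\d_s\tilde v$ with right-hand side $\d_s\tilde F$, so Young's inequality for convolutions gives $\|\tilde w\|_{L^1}\leq(\Re\lambda-b)^{-1}\|\d_s\tilde F\|_{L^1}$. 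Since $\dd y=a(\phi(s))\,\dd s$ and $\tilde F'(s)=a(\phi(s))\,F'(\phi(s))$ (with $\phi$ the inverse of $s$), the $L^1$ norms of derivatives transform isometrically and the bound returns to the original variable.

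For part (ii), the extra hypothesis $a\in BUC^1$ allows direct differentiation of the ODE in $x$: $w=\d_x\check v$ satisfies $a\,w'+(\lambda-b+a'(x))\,w=\d_x F$, that is $(\lambda-L_{a,\,b-a'})\,w=\d_x F$, now with variable zero-order coefficient $b-a'(\cdot)$ satisfying $\sup_\RR(b-a'(\cdot))=b-\inf_\RR a'$. The standing assumption $\Re\lambda>b-\inf_\RR a'$ is exactly what is needed to apply the $L^\infty$ estimate of part (i) to this new equation, yielding directly the claimed bound on $\|\d_x \check v\|_{L^\infty}$.

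The main technical obstacle throughout is less the derivation of the estimates -- which are immediate once the explicit formulas are in hand -- than the justification of the formal differentiations when $F$ has only the assumed weak regularity (and $a$ is not smooth in part (i)). This will be handled cleanly by mollifying $F$ (and if needed $a$), establishing the identities for smooth data where they follow from classical ODE theory, and passing to the limit using the $L^\infty$ stability of the solution map afforded by the representation formula.
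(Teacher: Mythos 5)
Your proposal is correct and follows essentially the same route as the paper: the explicit integrating-factor representation $\check v(x;\lambda)=\int_{-\infty}^x e^{-\int_y^x(\lambda-b)/a}\,F(y)/a(y)\,\dd y$, uniqueness from the unboundedness of the homogeneous solution at $-\infty$, and the observation that $\d_x\check v$ solves the same type of equation with $b$ replaced by $b-a'$ (the paper reaches this by integrating by parts in the representation formula rather than differentiating the ODE, and extracts the $L^\infty$ and $L^1$ bounds from exact antiderivative identities rather than from your straightening change of variable plus Young's inequality, but these are equivalent computations).
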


\begin{proof}
Let us begin with the uniqueness part. If $(\lambda-L_{a,\,b})\,\check{v}(\,\cdot\,;\lambda)=0$ then actually 
\[
\check{v}(\,x\,;\lambda)\,=\,e^{ \int_0^x\frac{b(z)-\lambda}{a(z)}\,\dd z}\,\check{v}_0
\]
for some constant $\check{v}_0\in\CC$. Then if $a$ is positive and bounded away from zero and $\Re(\lambda)>\sup_\RR b(\cdot)$, the boundedness near $x=-\infty$ implies $\check{v}_0=0$ since $\abs{e^{ \int_0^x\frac{b(z)-\lambda}{a(z)}\,\dd z}}\geq e^{|x|\,\frac{\Re(\lambda)-\sup_\RR b(\cdot)}{\|a\|_{L^\infty}}}$ when $x<0$. Likewise if $a$ is negative and bounded away from zero and $\Re(\lambda)>\sup_\RR b(\cdot)$, boundedness near $x=\infty$ yields $\check{v}_0=0$.

From now on for definiteness we assume that $a$ is positive and bounded away from zero. Note that there is no loss of generality since one may go from this case to the opposite one by reversing $x$ into $-x$.

Let $F\in BUC^0(\RR)$. One readily checks when $\Re(\lambda)>\sup_\RR b(\cdot)$ that
\[
\check v(x;\lambda)\eqdef \int_{-\infty}^x e^{ \int_y^x\frac{b(z)-\lambda}{a(z)}\,\dd z}\frac{F(y)}{a(y)}\,\dd y
\]
defines $\check{v}(\,\cdot\,;\lambda)\in BUC^1(\RR)$ and that
\begin{align*}
\abs{\check v(x;\lambda)}&\leq \frac{\Norm{F}_{L^\infty(\RR)}}{\Re\lambda-\sup_\RR b(\cdot)} 
\int_{-\infty}^x e^{ \int_y^x\frac{b(z)-\Re\lambda}{a(z)}\,\dd z}\frac{\Re\lambda-b(y)}{a(y)} \,\dd y=\frac{\Norm{F}_{L^\infty(\RR)}}{\Re\lambda-\sup_\RR b(\cdot)}\,,\\
\d_x\check v(x;\lambda)&=\frac{F(x)}{a(x)}+  \int_{-\infty}^{x} \frac{b(x)-\lambda}{a(x)} e^{ \int_y^x\frac{b(z)-\lambda}{a(z)}\,\dd z}  \frac{F(y)}{a(y)}\,\dd y\,.
\end{align*}
It is also straightforward to check that if moreover $\lambda\in\RR$ and $F\geq0$ then $\check v(\cdot;\lambda)\geq0$. When moreover $b$ is constant and $\d_xF\in L^1(\RR)+L^\infty(\RR)$, the latter expression may be integrated by parts into
\begin{align*}
\d_x\check v(x;\lambda)
&=  \int_{-\infty}^{x} e^{ \int_y^x\frac{b-\lambda}{a(z)}\,\dd z}  \frac{\d_y F(y)}{a(x)}\,\dd y\\
&=  \int_{-\infty}^{x} e^{ \int_y^x\frac{b-\lambda-a'(z)}{a(z)}\,\dd z}  \frac{\d_y F(y)}{a(y)}\,\dd y\,.
\end{align*}
The latter expression may be used to obtain the $\dot{W}^{1,\infty}\to\dot{W}^{1,\infty}$ bound as we have derived the $L^\infty\to L^\infty$ bound. Concerning the former expression it may be integrated in $x$ to deduce the $\dot{W}^{1,1}\to\dot{W}^{1,1}$ bound since when $b$ is constant for any $y$
\[
\int_y^{\infty} e^{ \int_y^x\frac{b-\Re(\lambda)}{a(z)}\,\dd z}  \frac{\dd x}{a(x)}
\,=\,\frac{1}{\Re(\lambda)-b}\,.
\]
\end{proof}

With the above frozen-time resolvent estimates, for general coefficients $a,b$ one may first change frame to ensure that $a$ is bounded away from zero then apply general theorems on evolution systems. See for instance~\cite[Chapter~5, Theorem~3.1]{Pazy} with $X=BUC^0(\RR)$ and $Y=BUC^1(\RR)$, and apply~\cite[Chapter~5, Theorem~2.3]{Pazy} to reduce the verification of assumption~$(H_2)$ there to the case where $b$ is constant.

\begin{Proposition}\label{P.estimate-semilinear-constant}
Let $T\in(0,\infty]$, $a\in\cC^0([0,T),BUC^1(\RR))$, $b \in\cC^0([0,T),BUC^0(\RR))$. Then the family of operators $\cL_t=L_{a(t,\cdot),\,b(t,\cdot)}$ generates an evolution system $\cS_{a,b}$ on $BUC^0(\RR)$ such that for any $v_0\in BUC^0(\RR)$, any $0\leq s\leq t<T$
\[\Norm{\cS_{a,b}(s,t)\,v_0}_{L^\infty(\RR)}
\leq e^{\int_s^t\sup_\RR b(\tau,\cdot)\,\dd\tau}\Norm{v_0}_{L^\infty(\RR)}\,,\]
and $v_0\geq0$ implies $\cS_{a,b}(s,t)\,v_0\geq0$. If moreover $b$ is constant, then $v_0 \in BUC^0(\RR)\cap W^{1,1}(\RR)$ yields for any $0\leq s\leq t<T$
\[
\Norm{\d_x \cS_{a,b}(s,t)\,v_0}_{L^1(\RR)}
\leq e^{(t-s)\,b}\Norm{\d_x v_0}_{L^1(\RR)}\,,\]
and from $v_0 \in  W^{1,\infty}(\RR)$ stems for any $0\leq s\leq t<T$,
\[\Norm{\d_x \cS_{a,b}(s,t)\,v_0}_{L^\infty(\RR)}
\leq e^{(t-s)\,b-\int_s^t\inf_\RR \d_xa(\tau,\cdot)\,\dd\tau}\Norm{\d_xv_0}_{L^\infty(\RR)}\,.
\]
\end{Proposition}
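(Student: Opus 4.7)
The plan is to feed the frozen-time resolvent estimates of Lemma~\ref{l:resolvent} into the abstract evolution-system theory of~\cite[Chapter~5]{Pazy}, as announced in the paragraph preceding the statement. Since $a\in\cC^0([0,T),BUC^1(\RR))$ is uniformly bounded in the $L^\infty(\RR)$-norm on any compact subinterval $[0,T']\subset[0,T)$, I would first change frame so that the effective transport coefficient is bounded away from zero: picking $\sigma$ larger than $\sup_{[0,T']\times\RR}a$ and setting $\tv(t,x)\eqdef v(t,x+\sigma t)$ replaces $a$ by $a-\sigma$, uniformly negative. The evolution-system property then allows one to glue $\cS_{a,b}$ across such subintervals, so the construction reduces to a single one.

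When $b$ is spatially constant (possibly time-dependent), Lemma~\ref{l:resolvent} on $X=BUC^0(\RR)$ with dense subspace $Y=BUC^1(\RR)$ furnishes both the Kato-type $X$-stability of $\{\cL_t\}_t$ and its $Y$-stability, the latter because the commutator $[\d_x,\cL_t]\,v=-(\d_xa(t,\cdot))\,\d_xv$ is continuous from $Y$ to $X$ in $t$. An application of~\cite[Chapter~5, Theorem~3.1]{Pazy} then produces the evolution system $\cS_{a,b}$ on $BUC^0(\RR)$ leaving $BUC^1(\RR)$ invariant. The one-sided resolvent estimate
$\|(\lambda-\cL_t)^{-1}\|_{L^\infty\to L^\infty}\leq(\Re\lambda-\sup_\RR b(t,\cdot))^{-1}$
shows via Hille--Yosida that $\cL_t$ generates a semigroup of type $\sup_\RR b(t,\cdot)$, and Pazy's construction of $\cS_{a,b}$ as a limit of products of such frozen-time semigroups yields the $L^\infty$ bound with exponent $\int_s^t\sup_\RR b(\tau,\cdot)\,\dd\tau$. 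For general $b\in\cC^0([0,T),BUC^0(\RR))$, I would decompose $\cL_t=\cL_t^{(0)}+b(t,\cdot)\,\mathrm{Id}$ with $\cL_t^{(0)}\eqdef-a(t,\cdot)\d_x$ and invoke~\cite[Chapter~5, Theorem~2.3]{Pazy} to add the bounded multiplication. Positivity preservation stems from the positivity of $(\lambda-\cL_t)^{-1}$ for real $\lambda$ large (Lemma~\ref{l:resolvent}(i)), which transfers to each frozen-time exponential and survives the product/limit construction.

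For the derivative bounds, which concern the case where $b$ is spatially constant, the resolvent estimates needed are also directly provided by Lemma~\ref{l:resolvent}: its $\dot{W}^{1,1}\to\dot{W}^{1,1}$ component integrates into the $L^1$ bound on $\d_x\cS_{a,b}(s,t)v_0$ with exponent $(t-s)b$, while part~(ii) does the same for the $L^\infty$ bound with exponent $(t-s)b-\int_s^t\inf_\RR\d_xa(\tau,\cdot)\,\dd\tau$, via the Hille--Yosida-type reading of the one-sided bound $(\Re\lambda-b+\inf_\RR\d_xa(t,\cdot))^{-1}$. The hard part is the sharp bookkeeping of these one-sided suprema and infima under the time-dependence of $a$; it is precisely this requirement that forces $b$ to be constant in space for the derivative estimates, since any spatial variation of $b$ would generate an uncontrolled contribution to $[\d_x,\cL_t]$ that would destroy the $Y$-stability at the sharp rate.
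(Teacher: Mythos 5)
Your proposal is correct and follows essentially the same route as the paper, which likewise changes frame so that the transport coefficient is bounded away from zero, applies \cite[Chapter~5, Theorem~3.1]{Pazy} with $X=BUC^0(\RR)$ and $Y=BUC^1(\RR)$, uses \cite[Chapter~5, Theorem~2.3]{Pazy} to reduce the verification of $(H_2)$ to the case of constant $b$, and reads off the quantitative bounds and positivity from the frozen-time resolvent estimates of Lemma~\ref{l:resolvent}. The only minor imprecision is your closing explanation that spatial variation of $b$ would ``destroy the $Y$-stability'': it would only contribute $(\d_xb)\,v$ to the commutator, which is harmless for stability but does spoil the pure $\dot W^{1,p}\to\dot W^{1,p}$ form of the derivative bounds, which is the actual reason the lemma assumes $b$ constant there.
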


\subsection{The shockless nonlinear problem}\label{s:proof}

In this section we complete the proofs of results from Section~\ref{s:constant-shockless}. 

\begin{proof}[Proof of Proposition~\ref{P.constant-conditional}.] 
Let $\epsilon\in(0,1]$. Pick a classical solution $u=\uu+v$ starting from $\uu+v_0$ such that $\Norm{v_0}_{L^\infty(\RR)}\leq\epsilon$. Then if $u$ exists (as a classical solution) on $[0,t_0)$, for any $0\leq t<t_0$
\[
v(t,\cdot)\,=\,\cS_{f'(\uu+v),\,g'(\uu)}v_0
+\int_0^t \cS_{f'(\uu+v),\,g'(\uu)}(s,t)\big(g(\uu+v)-g(\uu)-g'(\uu)v\big)(s,\cdot) \,\dd s\,.
\]
Therefore if moreover for any $t\in [0,t_0)$, $\Norm{v(t,\cdot)}_{L^\infty(\RR)}\leq 2\epsilon\,e^{g'(\uu)\,t}$, then for any $t\in [0,t_0)$
\[
e^{-g'(\uu)\,t}\Norm{v(t,\cdot)}_{L^\infty(\RR)}
\leq \Norm{v_0}_{L^\infty(\RR)}
+2\epsilon\,C_g\,\int_0^t e^{g'(\uu)\,s}\,\left(e^{-g'(\uu)\,s}\Norm{v(s,\cdot)}_{L^\infty(\RR)}\right)\dd s
\]
where $C_g=\tfrac12\Norm{g''}_{L^\infty([\uu-2\epsilon,\uu+2\epsilon])}$, so that for any $t\in [0,t_0)$,
\begin{equation}\label{eq:Linfty}
\Norm{v(t,\cdot)}_{L^\infty(\RR)}
\leq \Norm{v_0}_{L^\infty(\RR)}\,e^{g'(\uu)\,t}\,e^{2\epsilon\,C_g\int_0^te^{g'(\uu)\,s}\dd s}
\leq \Norm{v_0}_{L^\infty(\RR)}\,e^{g'(\uu)\,t}\,e^{\epsilon\,\frac{2C_g}{|g'(\uu)|}}\,.
\end{equation}
If $\epsilon$ is small enough to ensure that $\exp(\epsilon\,\frac{2C_g}{|g'(\uu)|})<2$ then a continuity argument yields that estimate~\eqref{eq:Linfty} holds as long as $u$ persists as a classical solution. Since $\exp(\epsilon\,\frac{2C_g}{|g'(\uu)|})$ may be brought arbitrarily close to $1$ by choosing $\epsilon$ small, this proves the $L^\infty$ part of Proposition~\ref{P.constant-conditional}. With this bound in hands we deduce even more directly that if moreover $\d_xv_0\in L^1$ then
\[
\Norm{\d_xv(t,\cdot)}_{L^1(\RR)}
\leq \Norm{\d_xv_0}_{L^1(\RR)}\,e^{g'(\uu)\,t}\,e^{\epsilon\,\frac{2C_0\,C_g}{|g'(\uu)|}}\,.
\]
This achieves the proof by taking $\epsilon$ even smaller if needed.
\end{proof}

The proof of Proposition~\ref{P.constant-classical} being completely similar, we omit it.

\begin{proof}[Proof of Proposition~\ref{P.constant}.]
First we fix $\epsilon\in(0,1]$ sufficient small to satisfy conclusions of Proposition~\ref{P.constant-conditional} and to ensure that on $[\uu-C_0\epsilon,\uu+C_0\epsilon]$, $f''$ is of the sign of $f''(\uu)$. To proceed we use that if $u=\uu+v$ persists as a classical solution on $[0,t_0)$ then for $t\in[0,t_0)$
\[
\d_xv(t,\cdot)=\cS_{f'(\uu+v),\,g'(\uu+v)-f''(\uu+v)\d_xv}(0,t)\,\d_xv_0
\] 
thus by linearity and preservation of non negativity
\[
(\sign(f''(\uu))\d_xv(t,\cdot))_-\leq\cS_{f'(\uu+v),\,g'(\uu+v)-f''(\uu+v)\d_xv}(0,t)\,(\sign(f''(\uu))\d_xv_0)_-\,.
\] 
Therefore if moreover for any $t\in [0,t_0)$, $\Norm{(\sign(f''(\uu))\d_xv(t,\cdot))_-}_{L^\infty(\RR)}\leq 2\epsilon\,e^{g'(\uu)\,t}$, then for any $t\in [0,t_0)$
\[
\Norm{(\sign(f''(\uu))\d_xv(t,\cdot))_-}_{L^\infty(\RR)}
\leq \Norm{(\sign(f''(\uu))\d_xv_0)_-}_{L^\infty(\RR)}
e^{g'(\uu)\,t}
e^{\epsilon\frac{2C_f+C_0\,C_g}{|g'(\uu)|}}
\]
with $C_f=\Norm{f''}_{L^\infty([\uu- C_0\epsilon,\uu+ C_0\epsilon])}$ and $C_g=\Norm{g''}_{L^\infty([\uu- C_0\epsilon,\uu+ C_0\epsilon])}$. By choosing $\epsilon$ sufficiently small so that $e^{\epsilon\frac{2C_f+C_0\,C_g}{|g'(\uu)|}}\leq \min(\{2,C_0\})$, one deduces that if $u=\uu+v$ persists as a classical solution on $[0,t_0)$ then for $t\in[0,t_0)$
\[
\Norm{(\sign(f''(\uu))\d_xv(t,\cdot))_-}_{L^\infty(\RR)}
\leq \Norm{(\sign(f''(\uu))\d_xv_0)_-}_{L^\infty(\RR)}
C_0\,e^{g'(\uu)\,t}\,.
\]
One concludes by noticing that this implies that 
if $u=\uu+v$ persists as a classical solution on $[0,t_0)$ then for $t\in[0,t_0)$
\[
\Norm{\d_xv(t,\cdot)}_{L^\infty(\RR)}
\leq \Norm{\d_xv_0}_{L^\infty(\RR)}
C_0\,e^{g'(\uu)\,t}\,,
\]
which rules out finite-time blow-up.
\end{proof}

\begin{proof}[Proof of Proposition~\ref{P.high-order}.]
Propagation of regularity being classical, we focus on the decay estimate. Note that since we already know that $v$ is small in $L^\infty$
\begin{align*}
\d_x^k v(t,\cdot)&=\cS_{f'(\uu+v),\,g'(\uu)}(0,t)\,\d_x^kv_0\\
&+\int_0^t \cS_{f'(\uu+v),\,g'(\uu)}(s,t)
\Big(c_0(v(s,\cdot))\,v(s,\cdot)\,\d_x^kv(s,\cdot)+\hspace{-2.5em}\sum_{\substack{2\leq m\leq |\alpha|\\\alpha\in(\NN^*)^{m},\ |\alpha|\in\{k,k+1\}}}\hspace{-2em}c_\alpha(v(s,\cdot))\,\prod_{i=1}^m\d_x^{\alpha_i}v(s,\cdot)\Big) \dd s
\end{align*}
with $c_0$, $c_\alpha$ bounded. Note moreover that for any $1\leq\ell\leq k$, for some $C\geq0$ and any function $w$
\[
\Norm{\d_x^\ell w}_{L^\infty(\RR)}
\leq C\,\min(\{\Norm{w}_{L^\infty(\RR)}^{\frac{k-\ell}{k}}\Norm{\d_x^kw}_{L^\infty(\RR)}^{\frac{\ell}{k}},\ 
\Norm{\d_xw}_{L^\infty(\RR)}^{\frac{k-\ell}{k-1}}\Norm{\d_x^kw}_{L^\infty(\RR)}^{\frac{\ell-1}{k-1}}
\})
\]
so that for any $2\leq m\leq k+1$, $\alpha\in(\NN^*)^{m}$, $|\alpha|\in\{k,k+1\}$, there exists $C'$ and $C''$ such that for any $w$
\begin{align*}
\Norm{\prod_{i=1}^m\d_x^{\alpha_i}w}_{L^\infty(\RR)}
&\leq C'\,\min(\{\Norm{w}_{L^\infty(\RR)}^{m-\frac{|\alpha|}{k}}\Norm{\d_x^kw}_{L^\infty(\RR)}^{\frac{|\alpha|}{k}},\ 
\Norm{\d_xw}_{L^\infty(\RR)}^{m-\frac{|\alpha|-m}{k-1}}\Norm{\d_x^kw}_{L^\infty(\RR)}^{\frac{|\alpha|-m}{k-1}} \})\\
&\leq C''\,\Norm{w}_{W^{1,\infty}}^{m-1}\,\Norm{\d_x^kw}_{L^\infty(\RR)}\,.
\end{align*}
The proof is then concluded by first invoking the bounds of either Proposition~\ref{P.constant-classical} or Proposition~\ref{P.constant} jointly with those of Proposition~\ref{P.estimate-semilinear-constant} then applying the Gr\"onwall lemma. 
\end{proof}

\begin{proof}[Proof of Corollary~\ref{C.constant}.]
An initial data as in Corollary~\ref{C.constant} may be approximated through cut-off with sufficiently slow cut-off functions and convolution with positive kernels by initial data satisfying constraints of Proposition~\ref{P.constant}. Bounds of Propositions~\ref{P.constant-conditional} and~\ref{P.constant}, jointly with equation~\eqref{eq-u}, are then sufficient to extract a subsequence converging pointwise and uniformly bounded. With the latter one may take limits in weak formulations encoding the notion of entropy solution, hence proving the existence of an entropy solution starting from the prescribed initial data and satisfying claimed bounds. We refer the reader to~\cite[Section~6.2]{Bressan} for details on the latter compactness arguments.
\end{proof}

\subsection{Perturbation by small shocks}\label{s:constant_by_shock}

In this section we extend Proposition~\ref{P.constant} to the case where the perturbation contains a shock. 

We provide a description of the solution $u$ as regular on
\[
\Omega^\psi\eqdef\RR_+\times\RR\setminus\{\,(t,\psi(t))\,|\,t\geq0\,\}
\]
where $\psi$ follows the position of the shock. 

\begin{Remark}
It may be convenient to think of $u$ as being of the form
\[u:(t,x)\mapsto \tu(t,x-\psi(t))\]
with smooth unknowns $\psi:\RR_+\to\RR$ and $\tu:\RR_+\times\RR^\star\to\RR$. Though we shall not use this form explicitly here (partly because it is not convenient when two shocks are present), it underlies our strategy and statements. In particular, henceforth $\d_x u$ will not denote the distributional derivative of $u\in\mathcal{D}'(\RR)$ but its smooth part
\[\d_x u:(t,x)\mapsto \d_x\tu(t,x-\psi(t))\,.\]
Similarly, for $k\in\NN^\star$ and $\Omega$ an open domain, we denote $W^{k,\infty}(\Omega)$ (resp. $BUC^k(\Omega)$), the space of functions such that the restrictions to each connected component of $\Omega$, $\tilde \Omega$, belongs to $W^{k,\infty}(\check\Omega)$ (resp. $BUC^k(\check\Omega)$). These spaces are endowed with the obvious norms consistent with this definition.
\end{Remark}

For such a $u$ to satisfy the equation in distributional sense we require $u$ to satisfy it in a classical sense on $\Omega^\psi$ and that also holds the Rankine-Hugoniot condition, for any $t\geq0$
\[
 f(u_r(t))-f(u_l(t))=\psi'(t)  (u_r(t)-u_l(t))
\]
where $u_l(t)=\lim_{\delta\searrow 0} u(t,\psi(t)-\delta)$ and $u_r(t)=\lim_{\delta\searrow 0} u(t,\psi(t)+\delta)$. Moreover when $f''(\uu)\neq0$ then if $u$ is sufficiently close to $\uu$, its admissibility as an entropy solution is equivalent to Lax's condition~\cite[Section~4.5]{Bressan}
\[
f'(u_r(t))<f'(u_l(t)),\qquad t\geq0\,.
\]
Of course this requires initially $f'(u_r(0))<f'(u_l(0))$. Recall however that discontinuities with $f'(u_r(0))>f'(u_l(0))$ are already covered by Corollary~\ref{C.constant}.

\begin{Proposition}\label{P.small-shock}
Let $f,\,g\in \cC^2(\RR)$ and $\uu\in\RR$ be such that
\[g(\uu)=0\,,\qquad g'(\uu)<0\qquad\textrm{and}\qquad f''(\uu)\neq0\,.\]
For any $C_0>1$, there exist $\epsilon>0$ and $C>0$ such that for any $\psi_0\in\RR$ and $\tilde v_0\in BUC^1(\RR^\star)$ satisfying
\begin{equation}\label{init-condition-small-shock}
 \Norm{\tilde v_0}_{L^\infty(\RR^\star)}\leq\epsilon \quad \text{ and } \quad \Norm{(\sign(f''(\uu))\,\d_x\tv_0)_-}_{L^{\infty}(\RR^\star)}\leq\epsilon\,,
\end{equation}
and
\[
\lim_{\delta\searrow 0} f'(\uu+\tilde v_0(\delta))<\lim_{\delta\searrow 0} f'(\uu+\tilde v_0(-\delta)),
\]
there exists $\psi\in\cC^2(\RR^+)$ and $u\in BUC^1(\Omega^\psi)$ with initial data $\psi(0)=\psi_0$ and $u(0,\cdot)=(\uu+\tv_0)(\cdot+\psi_0)$ such that $u$ is an entropy solution to~\eqref{eq-u} and satisfies for any $t\geq 0$ 
\begin{align*}
\Norm{ u(t,\cdot-\psi(t))-\uu}_{L^{\infty}(\RR\setminus\{\psi(t)\})}&\leq \Norm{\tilde v_0}_{L^{\infty}(\RR^\star)} C_0\,e^{g'(\uu)\,t}\,,\\
\Norm{(\sign(f''(\uu))\,\d_x  u(t,\cdot-\psi(t)))_-}_{L^{\infty}(\RR\setminus\{\psi(t)\})}&\leq \Norm{(\sign(f''(\uu))\,\d_x\tilde v_0)_-}_{L^{\infty}(\RR^\star)} C_0\,e^{g'(\uu)\,t}\,,\\
\Norm{\d_x  u(t,\cdot-\psi(t))}_{L^{\infty}(\RR\setminus\{\psi(t)\})}&\leq \Norm{\d_x \tilde v_0}_{L^{\infty}(\RR^\star)} C_0\,e^{g'(\uu)\,t}\,,\\
\abs{\psi'(t)-f'(\uu)}&\leq \Norm{\tilde v_0}_{L^{\infty}(\RR^\star)} C\, e^{g'(\uu)\, t}\,,
\end{align*}
and moreover there exists $\psi_\infty$ such that 
\[
\abs{\psi_\infty-\psi_0}\,\leq \Norm{\tilde v_0}_{L^{\infty}(\RR^\star)} C\,,
\]
and for any $t\geq 0$
\[
\abs{\psi(t)-\psi_\infty-t\,f'(\uu)}\,\leq \Norm{\tilde v_0}_{L^{\infty}(\RR^\star)} C\, e^{g'(\uu)\, t}\,.
\]
\end{Proposition}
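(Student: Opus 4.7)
The plan is to follow the strategy sketched at the end of the Introduction: split the piecewise-regular initial data into its two smooth parts, extend each to a function on $\RR$, propagate these extensions using the shockless theory of Proposition~\ref{P.constant}, and then reconstruct both the shock curve and the solution from these two global classical flows via the Rankine-Hugoniot condition.

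First, I would extend $\tv_0|_{(-\infty,0)}$ and $\tv_0|_{(0,\infty)}$ to functions $\tv_0^l,\tv_0^r\in BUC^1(\RR)$ through standard $\cC^1$ extension procedures that preserve both $\Norm{\cdot}_{L^\infty}$ and the asymmetric one-sided bound $\Norm{(\sign(f''(\uu))\,\d_x\cdot)_-}_{L^\infty}$ up to a universal multiplicative constant, absorbed by shrinking $\epsilon$. Applying Proposition~\ref{P.constant} to the initial data $\uu+\tv_0^l(\cdot+\psi_0)$ and $\uu+\tv_0^r(\cdot+\psi_0)$ then yields two global classical solutions $u^l,u^r\in BUC^1(\RR_+\times\RR)$ to~\eqref{eq-u} satisfying the exponential $L^\infty$ and asymmetric $W^{1,\infty}$ decay estimates. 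Next I would define $\psi$ as the maximal solution to the Rankine-Hugoniot ODE
\[
\psi'(t)\,=\,\frac{f(u^r(t,\psi(t)))-f(u^l(t,\psi(t)))}{u^r(t,\psi(t))-u^l(t,\psi(t))}\,,\qquad \psi(0)\,=\,\psi_0\,.
\]
Taylor expansion near $\uu$ rewrites the right-hand side as $f'(\uu)+O(\Norm{u^l(t,\cdot)-\uu}_{L^\infty}+\Norm{u^r(t,\cdot)-\uu}_{L^\infty})$, which via the bounds on $u^l,u^r$ yields the claimed estimate $|\psi'(t)-f'(\uu)|\leq C\Norm{\tv_0}_{L^\infty(\RR^\star)}\,e^{g'(\uu)\,t}$; similarly $f'(u^r)-f'(u^l)=f''(\uu)(u^r-u^l)+O(\epsilon^2)$, which combined with the initial strict Lax inequality and the exponential decay of $u^{l,r}(t,\psi(t))-\uu$ shows by a continuity argument that $u^l(t,\psi(t))\neq u^r(t,\psi(t))$ for all $t\geq0$ (so the ODE extends globally) and that Lax's admissibility condition is preserved throughout.

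The solution $u$ is then defined on $\Omega^\psi$ by setting $u=u^l$ on $\{x<\psi(t)\}$ and $u=u^r$ on $\{x>\psi(t)\}$; it solves~\eqref{eq-u} classically off the shock curve by construction, the Rankine-Hugoniot jump condition by the definition of $\psi$, and is thus an entropy solution thanks to Lax admissibility. The bounds on $u-\uu$ and $\d_xu$ transfer verbatim from those satisfied by $u^l,u^r$, upon restriction to the appropriate half-line at each $t$. Finally, integrability in $t$ of $\psi'(t)-f'(\uu)$ furnishes both $\psi_\infty\eqdef \psi_0+\int_0^\infty(\psi'(s)-f'(\uu))\,\dd s$ and the quantitative bounds $|\psi_\infty-\psi_0|\leq C\Norm{\tv_0}_{L^\infty(\RR^\star)}$ together with the exponential closeness of $\psi(t)$ to $\psi_\infty+t\,f'(\uu)$.

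The main obstacle, though of modest analytic difficulty, lies in the extension procedure: a naive $\cC^1$ extension may amplify the one-sided bound $\Norm{(\sign(f''(\uu))\,\d_x\cdot)_-}_{L^\infty}$ by an uncontrolled factor. A safe choice is to extend each smooth part by a function that is affine with the correct sign of slope on a small interval and then smoothly transitions to a constant value, so that the asymmetric bound is inherited from the original data. A secondary subtlety is that the ODE for $\psi$ uses pointwise values of $u^l,u^r$ along the curve $x=\psi(t)$; the required $\cC^2$ regularity of $\psi$ then follows from the $BUC^1$ regularity of $u^l$ and $u^r$ combined with the equation~\eqref{eq-u} satisfied by each of them.
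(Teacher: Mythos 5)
Your overall scheme --- extend each smooth part of $\tv_0$ while controlling the one-sided derivative bound, propagate the extensions by Proposition~\ref{P.constant}, define $\psi$ by the Rankine--Hugoniot condition and glue --- is exactly the paper's, and your treatment of the extension step, of the estimate on $\psi'-f'(\uu)$ and of $\psi_\infty$ matches the paper's proof.

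There is, however, a genuine gap where you claim that ``the initial strict Lax inequality and the exponential decay of $u^{l,r}(t,\psi(t))-\uu$ show by a continuity argument that $u^l(t,\psi(t))\neq u^r(t,\psi(t))$ for all $t\geq0$.'' Continuity only tells you that the sign of the jump $w(t)=u^r(t,\psi(t))-u^l(t,\psi(t))$ is constant on each interval where $w\neq 0$; it does not exclude that $w$ reaches $0$ in finite time, after which the gluing could produce a discontinuity of the wrong (rarefaction) type and entropy admissibility would fail. The exponential decay of $u^l-\uu$ and $u^r-\uu$ is of no help here, since both quantities converge to the \emph{same} limit. The paper closes this point by a quantitative argument: writing $\psi'=s_f(u_-,u_+)$ with the integral slope function $s_f(a,b)=\int_0^1 f'(a+\tau(b-a))\,\dd\tau$ --- which, unlike your divided-difference formula, is $\cC^1$ across the diagonal, and hence also gives global solvability and $\cC^2$ regularity of $\psi$ without a priori knowledge that $w\neq0$ --- one computes $w'(t)=\Phi(t,w(t))$ where $\Phi$ is $\cC^1$ and satisfies $\Phi(t,0)=0$ for every $t$ (each term in $w'$, namely $g(u_+)-g(u_-)$ and $(\psi'-f'(u_\pm))\,\d_xu_\pm$, vanishes when $u_+=u_-$ at the shock). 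Cauchy--Lipschitz uniqueness for this scalar ODE then forbids $w$ from vanishing in finite time, and Lax's condition follows since $f'$ is one-to-one on $[\uu-C_0\epsilon,\uu+C_0\epsilon]$. You need this, or an equivalent lower bound on $|w|$, to complete the entropy-admissibility part of your argument.
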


\begin{Remark}\label{Rk.gradient}
The consideration of perturbation by small shocks is partly motivated by the fact that smooth perturbations, small in $L^\infty$ but not in $W^{1,\infty}$, may indeed form shocks in finite time. Note however that whereas Proposition~\ref{P.constant-conditional} does follow smooth solutions until shock formation, Proposition~\ref{P.small-shock} cannot be used right after shock formation since it requires (asymmetric) smallness of the smooth part of the gradient. Indeed Proposition~\ref{P.small-shock} is a counterpart to Proposition~\ref{P.constant} whereas an analog to Proposition~\ref{P.constant-conditional} would be more appropriate near a shock formation. Note however that then the ``smooth" part of solutions would then be controlled only in $W^{1,1}$.
\end{Remark}

\begin{Remark}
Since the problem is known to be globally well-posed in the class of $L^\infty$ entropy solutions, one may rightfully wonder whether the result could be extended to such a general class. Such an extension would lead us a way beyond the scope of the present contribution, focused on piece-wise smooth solutions, and very close to front-tracking algorithms. Without going that far, let us now give some hints about first steps required to extend our strategy in this direction. Note first that it is straightforward to extend Proposition~\ref{P.small-shock} to cases when the initial data contains discontinuities leading to rarefaction waves and an arbitrary number of well-separated shocks. Going beyond the latter case to allow for interacting shocks seems a more tedious task but seemingly still achievable with arguments in the spirit of those expounded in the present contribution. In particular even in the latter case one expects that no new discontinuity arises and that paths of discontinuities could be predicted by linearized dynamics. However to relax constraints on derivatives, one would need to follow the path sketched in Remark~\ref{Rk.gradient} or to approximate $L^\infty$ initial data by piece-wise smooth initial data containing only flat or almost flat smooth parts but an arbitrary large number of shocks. In both cases the prediction of the regularity structure would be a much harder task.
\end{Remark}

\begin{proof}[Proof of Proposition~\ref{P.small-shock}] 
To spare notational complexity, we assume henceforth that $\psi_0=0$ and accordingly drop tildes on $\tv_0$. The general case may be dealt with either by using translation invariance or by propagating notational changes.

We recall that the proof strategy is the following. Given an initial data $v_0$ satisfying~\eqref{init-condition-small-shock}, we define two extensions $v_{0,\pm}$, defined on $\RR$, satisfying $v_{0,\pm}=v_0$ on $ \RR^\pm$, and fulfilling the hypotheses of Proposition~\ref{P.constant} near $\uu$. Consider $u_\pm$ the two global unique classical solutions to~\eqref{eq-u}  emerging from the initial data $u_\pm\id{t=0}=\uu +v_{0,\pm}$. The solution $u$ is constructed by patching together $u_+$ and $u_-$ along the curve $\psi(t)$ defined through the Rankine-Hugoniot condition.

The first step is carried out thanks to the following Lemma.
\begin{Lemma}\label{L.multivalued}
For any $C_0^{(0)}>1$ and any $v_0\in BUC^1(\RR^\star)$ there exist
$v_{0,\pm}\in BUC^1(\RR)$ satisfying
\[v_0(x)=\begin{cases} 
v_{0,+}(x)&\text{ if } x>0,\\
v_{0,-}(x) &\text{ if } x<0,
\end{cases}
\]
and
\begin{align*}
\Norm{v_{0,\pm}}_{L^{\infty}(\RR)}&\leq \Norm{v_0}_{L^{\infty}(\RR^\pm)} C_0^{(0)}\,,\\
\Norm{(\d_x v_{0,\pm})_-}_{L^{\infty}(\RR)}&\leq \Norm{(\d_xv_0)_-}_{L^{\infty}(\RR^\pm)}\,,\\
\Norm{(-\d_x v_{0,\pm})_-}_{L^{\infty}(\RR)}&\leq \Norm{(-\d_xv_0)_-}_{L^{\infty}(\RR^\pm)}\,,\\
\Norm{\d_x v_{0,\pm}}_{L^{\infty}(\RR)}&\leq \Norm{\d_x v_0}_{L^{\infty}(\RR^\pm)}\,.
\end{align*}
\end{Lemma}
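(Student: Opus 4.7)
The plan is to build $v_{0,+}$ and $v_{0,-}$ independently; by the reflection $x\mapsto-x$ it suffices to describe $v_{0,+}$. The key observation is that since $v_0$ is bounded on $\RR^+$, necessarily $\inf_{\RR^+}\d_xv_0\leq0\leq\sup_{\RR^+}\d_xv_0$, so the admissible range for $\d_xv_{0,+}$ on $\RR^-$ contains $0$. This makes the extension problem feasible: we can let $\d_xv_{0,+}$ decay to $0$ over a short interval, keeping $v_{0,+}$ close to $v_0(0^+)$.

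Explicitly, fix once and for all a smooth $\chi_0:\RR\to[0,1]$ with $\chi_0\equiv1$ on $(-\infty,0]$, $\chi_0\equiv0$ on $[1,\infty)$ and nonincreasing, and for $\eta>0$ to be tuned set $\chi_\eta(y)=\chi_0(y/\eta)$. I define
\[
v_{0,+}(x)\eqdef\begin{cases}v_0(x)&\textrm{if }x\geq0,\\[0.3em]\displaystyle v_0(0^+)-\d_xv_0(0^+)\int_0^{-x}\chi_\eta(u)\,\dd u&\textrm{if }x<0,\end{cases}
\]
which lies in $BUC^1(\RR)$: matching of values and derivatives at $0$ comes from $\chi_\eta(0)=1$, while boundedness and uniform continuity on each side are immediate. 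On $\RR^-$ one computes $\d_xv_{0,+}(x)=\d_xv_0(0^+)\chi_\eta(-x)$, which lies in the closed segment joining $0$ and $\d_xv_0(0^+)$; since both endpoints belong to $[\inf_{\RR^+}\d_xv_0,\sup_{\RR^+}\d_xv_0]$ and $|\d_xv_0(0^+)|\leq\Norm{\d_xv_0}_{L^\infty(\RR^+)}$, the three pointwise derivative bounds of the lemma hold at once.

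For the $L^\infty$ bound, on $\RR^-$,
\[
|v_{0,+}(x)|\leq\Norm{v_0}_{L^\infty(\RR^+)}+\eta\,\Norm{\d_xv_0}_{L^\infty(\RR^+)}\int_0^\infty\chi_0(t)\,\dd t,
\]
so it suffices to take $\eta$ small enough (depending on $C_0^{(0)}$ and on $v_0$) to force the second summand below $(C_0^{(0)}-1)\Norm{v_0}_{L^\infty(\RR^+)}$, the degenerate case $v_0\equiv0$ on $\RR^+$ being handled trivially by $v_{0,+}\equiv0$. I do not anticipate any genuine obstacle: the construction is tailored to exploit that $0$ lies in the admissible derivative range, and all four claimed bounds follow from elementary estimates.
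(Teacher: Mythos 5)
Your construction is correct and is essentially the paper's own: the authors also extend $v_0$ past $0$ by a function whose derivative ramps from $\d_xv_0(0^+)$ down to $0$ over a short interval (they use the explicit quadratic $v_0(0^+)+(x+\tfrac12\delta^{-1}x^2)\d_xv_0(0^+)$, i.e.\ a linear ramp of the derivative, with $\delta$ chosen explicitly as $2(C_0^{(0)}-1)\Norm{v_0}_{L^\infty(\RR^+)}/\max(\{1,|\d_xv_0(0^+)|\})$, where your $\chi_\eta$ gives a smooth ramp and $\eta$ is chosen implicitly). In both cases the extended derivative is $\tau\,\d_xv_0(0^+)$ with $\tau\in[0,1]$, which yields the three derivative bounds at once, and the short ramp length controls the $L^\infty$ amplification; your handling of the degenerate case $v_0\equiv 0$ on $\RR^+$ is also fine.
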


\begin{proof} Since the situation is symmetric we only show how to extend the right part of $v_0$. To do so let us introduce 
\[
v_0(0^+)\eqdef \lim_{x\searrow 0 } v_0(x)\qquad\textrm{and}\qquad
\d_xv_0(0^+)\eqdef \lim_{x\searrow 0 } \d_xv_0(x)
\]
whose existence is guaranteed by uniform continuity.

We set 
\[\delta=
\frac{2(C_0^{(0)}-1)\Norm{v_0}_{L^\infty(\RR^+)}}{\max(\{1,|\d_xv_0(0^+)|\})}
\]
and define
\[ v_{0,+}(x)=\begin{cases}
v_0(0^+)-\frac\delta2\d_xv_0(0^+)&\text{ if } x\in(-\infty,-\delta]\,,\\
v_0(0^+)+  \big(x+\frac12\delta^{-1}x^2\big)\d_xv_0(0^+)&\text{ if } x\in(-\delta,0]\,,\\
v_0(x) &\text{ if } x>0\,.
\end{cases}
\]
One readily checks that $v_{0,+}$ satisfies all prescribed constraints.
\end{proof}

We can now proceed with the proof of Proposition~\ref{P.small-shock}. We denote $C_0$ the prescribed amplifying constant and $\epsilon$ the smallness parameter as in the statement. First we apply Lemma~\ref{L.multivalued} with amplification constant $C_0^{(0)}=\sqrt{C_0}$ to receive extensions $v_{0,\pm}$. Then we apply twice Proposition~\ref{P.constant}, with initial perturbations $v_{0,\pm}$ near $\uu$ and prescribed amplification factors $C_0^\pm=\sqrt{C_0}$. This is licit provided we constrain $\epsilon$ by
\[
\sqrt{C_0}\epsilon \leq \epsilon_0
\]
where $\epsilon_0$ encodes the smallness constraint arising from Proposition~\ref{P.constant}. Hence the existence of $u_\pm\in BUC^1(\RR^+\times\RR)$ global unique classical solutions to~\eqref{eq-u} with initial data $u_\pm\id{t=0}=\uu+v_{0,\pm}$ satisfying 
for any $t\geq0$,
\begin{align*}
\Norm{u_\pm(t,\cdot)-\uu}_{L^{\infty}(\RR)}&\leq \Norm{v_0}_{L^{\infty}(\RR^\star)} C_0\,e^{g'(\uu)\,t}\,,\\
\Norm{(\sign(f''(\uu))\,\d_x u_\pm(t,\cdot))_-}_{L^{\infty}(\RR)}&\leq \Norm{(\sign(f''(\uu))\,\d_xv_0)_-}_{L^{\infty}(\RR^\star)} C_0\,e^{g'(\uu)\,t}\,,\\
\Norm{\d_x u_\pm(t,\cdot)}_{L^{\infty}(\RR)}&\leq \Norm{\d_x v_0}_{L^{\infty}(\RR^\star)} C_0\,e^{g'(\uu)\,t}\,.
\end{align*}

We shall construct our solution, $u$, through the following formula:
\begin{equation}\label{patch}
u(t,x)=
\begin{cases} 
u_-(t,x) & \text{ if } x<\psi(t),\\
u_+(t,x) & \text{ if } x>\psi(t),
\end{cases}
\end{equation}
where the discontinuity curve, $\psi$, is defined through the Rankine-Hugoniot condition
\[
\big(u_+(t,\psi(t))-u_-(t,\psi(t))\big)\psi'(t)\,=\, f(u_+(t,\psi(t)))-f(u_-(t,\psi(t)))\,.
\]

To this aim, we introduce the slope function associated with $f$:
\begin{equation}\label{slope-function}
s_f:\,\RR\times\RR\to\RR,\quad (a,b)\mapsto \int_0^1 f'\big(a+\tau(b-a)\big)\,\dd \tau.
\end{equation}
We have $s_f\in \cC^1(\RR\times\RR)$. In particular, $(t,x)\mapsto s_f(u_-(t,x),u_+(t,x))\in BUC^1(\RR_+\times\RR)$, hence there exists a unique $\psi\in\cC^2(\RR_+)$ satisfying $\psi(0)=0$ and for any $t\geq 0$,
\[\psi'(t)\,=\, s_f(u_-(t,\psi(t)),u_+(t,\psi(t)))\,.\]
It follows that $\psi$ satisfies the Rankine-Hugoniot condition as well as the claimed estimates. Indeed, we have for any $t\geq 0$
\[\psi'(t)-f'(\uu)\,=\,s_f(u_-(t,\psi(t)),u_+(t,\psi(t)))-s_f(\uu,\uu)\]
and since $s_f$ is a locally Lipschitz function, the bound on $\psi'(t)-f'(\uu)$ stems directly from the known bounds on $\Norm{u_\pm(t,\cdot)-\uu}_{L^\infty(\RR)}$. Now the bound on $\psi'-f'(\uu)$ may be integrated to conclude the desired estimate with
\[
\psi_\infty=\int_0^\infty (\psi'(t)-f'(\uu))\,\dd t\,.
\]

To achieve the proof, we need to ensure that lessening $\epsilon$ if necessary, the constructed weak solution is an entropy solution. Since $f''(\uu)\neq 0$, we can restrict $\epsilon$ so that $f$ is either strictly concave or strictly convex on $[\uu-C_0\epsilon,\uu+C_0\epsilon]$ and hence $u$ is an entropy solution if and only if Lax's condition holds, {\em i.e.}
\[
f'(u_+(t,\psi(t)))<f'(u_-(t,\psi(t))),\qquad t\geq0\,.
\]
Since the corresponding inequality holds at time $t=0$ and $f'$ is one-to-one on $[\uu-C_0\epsilon,\uu+C_0\epsilon]$, it is sufficient to prove that
\[w(t):=u_+(t,\psi(t))- u_-(t,\psi(t))\neq 0,\qquad t>0\,.\]
Notice
\begin{align*}
w'(t) &=\left(\d_t u_+ +\psi'(t)\d_x u_+-\d_t u_- -\psi'(t)\d_x u_-\right)(t,\psi(t))\\
&=\Big(g(u_+)-g(u_-)+\big(\psi'(t)-f'(u_+)\big)\d_x u_+-\big(\psi'(t)-f'(u_-)\big)\d_x u_-\Big)(t,\psi(t))\\
&=\Phi(t,w(t))
\end{align*}
with
\begin{align*}
\Phi\,:\,(t,z)\mapsto&\ s_g(u_+(t,\psi(t)),u_-(t,\psi(t)))\,z\\
&+\Big(s_f(u_+(t,\psi(t)),u_+(t,\psi(t))-z)-s_f(u_+(t,\psi(t)),u_+(t,\psi(t)))\Big)\d_x u_+(t,\psi(t))\\
&-\Big(s_f(u_-(t,\psi(t))+z,u_-(t,\psi(t)))-s_f(u_-(t,\psi(t)),u_-(t,\psi(t)))\Big)\d_x u_-(t,\psi(t)).
\end{align*}
Since $\Phi$ is $\cC^1$ and ($\forall t\geq0$, $\Phi(t,0)=0$), an application of the Cauchy-Lipschitz theorem concludes the proof.
\end{proof}

\section{Asymptotic stability of shocks}

\subsection{Asymptotic stability under shockless perturbations}\label{s:shockless}

In this section under natural spectral assumptions we show the asymptotic stability under regular perturbations of entropy-admissible Riemann shocks of~\eqref{eq-u}. More precisely, as described in the introduction we consider a uniformly traveling wave $\uu$, 
\[\uu(t,x)=\uU(x-(\psi_0+\sigma t))\,,\] 
with initial shock position $\psi_0\in\RR$, speed $\sigma\in\RR$ and wave profile $\uU$
\begin{equation}\label{uU} \uU(x)=\begin{cases}
\uu_- &\text{ if }  x<0\\
\uu_+ &\text{ if }  x>0
\end{cases}\end{equation}
where $(\uu_-,\uu_+)\in\RR^2$, $\uu_+\neq\uu_-$. The problem is invariant by translation and $\psi_0$ is arbitrary, whereas speed and profile are assumed to satisfy conditions enforcing that $\uu$ is a stable entropy solution.
To ensure that $\uu$ is a weak solution, we require that $(\sigma,\uu_-,\uu_+)$ satisfies the equilibrium condition
\begin{equation}\label{equilibrium-shock}
g(\uu_+)=0 \quad \text{ and } \quad g(\uu_-)=0\,;
\end{equation}
and the Rankine-Hugoniot condition 
\begin{equation}\label{RH-shock}
f(\uu_+)-f(\uu_-)=\sigma (\uu_+-\uu_-)\,.
\end{equation}
(Strict) entropy admissibility then amounts to the following Oleinik condition
\begin{equation}\label{gLax-shock}
\begin{cases}
\qquad\qquad\qquad\sigma\,>\,f'(\uu_+)\,,&\\[0.5em]
\frac{f(\tau\,\uu_-+(1-\tau)\,\uu_+)-f(\uu_-)}{\tau\,\uu_-+(1-\tau)\,\uu_+-\uu_-}>
\frac{f(\tau\,\uu_-+(1-\tau)\,\uu_+)-f(\uu_+)}{\tau\,\uu_-+(1-\tau)\,\uu_+-\uu_+}&\qquad\textrm{for any }\ \tau\in(0,1)\,,\\[0.5em]
\qquad\qquad\qquad f'(\uu_-)\,>\,\sigma\,,
\end{cases}
\end{equation}
and the spectral stability is encoded in 
\begin{equation}\label{stab-spec-shock}
g'(\uu_+)<0 \quad \text{ and } \quad g'(\uu_-)<0\,.
\end{equation}
\begin{Remark}\label{R.Lax-vs-Oleinik}
Note that the entropy condition~\eqref{gLax-shock} contributes to the stability properties of the shock, in a more subtle way than~\eqref{stab-spec-shock}. To begin, let us point out that without assuming condition~\eqref{gLax-shock} the proof of Theorem~\ref{Th.shock} below provides a weak solution to~\eqref{eq-u}. Yet in full generality the solution depends then on the choices of initial extensions. This lack of uniqueness is even captured by a direct spectral analysis, as hinted at in Section~\ref{s:conclusion}, and the corresponding linearized dynamics is ill-posed. Replacing~\eqref{gLax-shock} with the weaker Lax condition
\[
f'(\uu_+)<\sigma < f'(\uu_-)
\] 
restores spectral stability and ensures uniqueness in a suitable class of piecewise smooth solutions. The uniqueness in the class of entropy-admissible solutions if Oleinik's condition is satisfied is provided by the theory due to Kru\v zkov~\cite{Kruzhkov}. The full condition~\eqref{gLax-shock} would also be crucial to the stability properties of the shock if one allowed perturbations breaking the large shock into a ``sum" of smaller subshocks. See~\cite[Remark~4.7]{Bressan} for a more detailed discussion and more generally~\cite[Chapters~4 and~6]{Bressan} for classical background on entropy solutions.
\end{Remark}

As in Section~\ref{s:constant_by_shock} we shall solve~\eqref{eq-u} in the class of piecewise regular functions and adopt conventions introduced there. The main difference is that now we require as entropy condition, for any $t\geq0$
\begin{equation}\label{gLax}
\begin{cases}
\qquad\qquad\qquad\psi'(t)\,>\,f'(u_r(t))\,,&\\[0.5em]
\frac{f(\tau\,u_l(t)+(1-\tau)\,u_r(t))-f(u_l(t))}{\tau\,u_l(t)+(1-\tau)\,u_r(t)-u_l(t)}>
\frac{f(\tau\,u_l(t)+(1-\tau)\,u_r(t))-f(u_r(t))}{\tau\,u_l(t)+(1-\tau)\,u_r(t)-u_r(t)}&\qquad\textrm{for any }\ \tau\in(0,1)\,,\\[0.5em]
\qquad\qquad\qquad f'(u_l(t))\,>\,\psi'(t)\,.
\end{cases}
\end{equation}
where $u_l(t)=\lim_{\delta\searrow 0} u(t,\psi(t)-\delta)$ and $u_r(t)=\lim_{\delta\searrow 0} u(t,\psi(t)+\delta)$.
\begin{Theorem}\label{Th.shock}
Let $f,g\in \mathcal{C}^2(\RR)$ and $(\sigma,\uu_-,\uu_+)\in\RR^3$ satisfying~\eqref{equilibrium-shock}-\eqref{RH-shock}-\eqref{gLax-shock}-\eqref{stab-spec-shock} and
\begin{equation} \label{hyp-convex-shock}
f''(\uu_+)\neq 0\quad \text{ and }\quad  f''(\uu_-)\neq 0\,.
\end{equation}
For any $C_0>1$, there exists $\epsilon>0$ and $C>0$ such that for any $\psi_0\in\RR$ and $\tv_0\in BUC^1(\RR^\star)$ satisfying
\begin{equation}\label{init-condition-shock}
\begin{array}{rl}
\Norm{\tv_0}_{L^\infty(\RR^\star)}&\leq\epsilon\,,\\
\Norm{(\sign(f''(\uu_+))\,\d_x\tv_0)_-}_{L^{\infty}(\RR^+)}&\leq\epsilon\,,\\
\Norm{(\sign(f''(\uu_-))\,\d_x\tv_0)_-}_{L^{\infty}(\RR^-)}&\leq\epsilon\,,
\end{array}
\end{equation}
there exists $\psi\in\cC^2(\RR^+)$ with initial data $\psi(0)=\psi_0$ such that the entropy solution to~\eqref{eq-u}, $u$, generated by the initial data $u(0,\cdot)=(\uU+\tv_0)(\cdot+\psi_0)$, $\uU$ being as in \eqref{uU}, belongs to $ BUC^1(\Omega^\psi)$ and satisfies for any $t\geq 0$
\begin{align*}
\Norm{u(t,\cdot-\psi(t))-\uu_\pm}_{L^{\infty}(\RR^\pm)}&\leq \Norm{\tv_0}_{L^{\infty}(\RR^\pm)} C_0\,e^{g'(\uu_\pm)\,t}\,,\\
\Norm{(\sign(f''(\uu_\pm))\,\d_x u(t,\cdot-\psi(t)))_-}_{L^{\infty}(\RR^\pm)}&\leq \Norm{(\sign(f''(\uu_\pm))\,\d_x\tv_0)_-}_{L^{\infty}(\RR^\pm)} C_0\,e^{g'(\uu_\pm)\,t}\,,\\
\Norm{\d_x u(t,\cdot-\psi(t))}_{L^{\infty}(\RR^\pm)}&\leq \Norm{\d_x \tv_0}_{L^{\infty}(\RR^\pm)} C_0\,e^{g'(\uu_\pm)\,t}\,,\\
\abs{\psi'(t)-\sigma}&\leq \Norm{\tv_0}_{L^{\infty}(\RR^\star)}\,C\, e^{\max(\{g'(\uu_+),g'(\uu_-)\})\, t}\,,
\end{align*}
and moreover there exists $\psi_\infty$ such that 
\[
\abs{\psi_\infty-\psi_0}\,\leq \Norm{\tv_0}_{L^{\infty}(\RR^\star)} C\,,
\]
and for any $t\geq 0$
\[
\abs{\psi(t)-\psi_\infty-t\,\sigma}\,\leq \Norm{\tv_0}_{L^{\infty}(\RR^\star)} C\, e^{\max(\{g'(\uu_+),g'(\uu_-)\})\, t}\,.
\]
\end{Theorem}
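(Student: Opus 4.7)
The plan is to mirror the proof of Proposition~\ref{P.small-shock}, the only genuinely new ingredient being the propagation of the full Oleinik condition~\eqref{gLax}. Without loss of generality I take $\psi_0=0$. First I apply Lemma~\ref{L.multivalued} separately on $\RR^+$ and $\RR^-$ to extend $\tv_0$ into two functions $\tv_{0,\pm}\in BUC^1(\RR)$ that coincide with $\tv_0$ on $\RR^\pm$, with $L^\infty$ amplification $\sqrt{C_0}$ and preservation of the asymmetric one-sided derivative bounds. For $\epsilon$ small enough I then apply Proposition~\ref{P.constant} twice, with amplification $\sqrt{C_0}$, once at each stable endstate $\uu_\pm$, to produce global classical solutions $u_\pm\in BUC^1(\RR^+\times\RR)$ emerging from $\uu_\pm+\tv_{0,\pm}$ and satisfying on all of $\RR$ the three exponential decay estimates at rate $g'(\uu_\pm)$, with final amplification $C_0$.

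Next I define $\psi\in\cC^2(\RR^+)$ as the unique global solution of the ODE
\[
\psi'(t)=s_f\bigl(u_-(t,\psi(t)),u_+(t,\psi(t))\bigr)\,,\qquad \psi(0)=0\,,
\]
where $s_f$ is the slope function~\eqref{slope-function}, and patch $u_-$ and $u_+$ into $u$ via formula~\eqref{patch}. Then $u\in BUC^1(\Omega^\psi)$ is a piecewise classical solution of~\eqref{eq-u} satisfying the Rankine-Hugoniot relation by construction, and the pointwise decay estimates on $u(t,\cdot-\psi(t))$ are direct restrictions of those on $u_\pm$. Since $s_f(\uu_-,\uu_+)=\sigma$ by~\eqref{RH-shock} and $s_f$ is locally Lipschitz, the $L^\infty$ decay of $u_\pm-\uu_\pm$ transfers to the bound on $\psi'(t)-\sigma$, and integration in time produces $\psi_\infty\eqdef\psi_0+\int_0^\infty(\psi'(t)-\sigma)\,\dd t$ with the claimed estimates.

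The main obstacle, and the point where the strict Oleinik condition~\eqref{gLax-shock} plays its full role rather than a mere Lax condition, is to verify that the constructed $u$ satisfies~\eqref{gLax} for all $t\geq 0$, so that Kru\v{z}kov's uniqueness theorem identifies it with \emph{the} entropy solution. The outer strict inequalities $\psi'(t)>f'(u_r(t))$ and $f'(u_l(t))>\psi'(t)$ follow at once from~\eqref{gLax-shock}, the $L^\infty$ smallness of $u_\pm(t,\psi(t))-\uu_\pm$ and the bound on $\psi'(t)-\sigma$. For the chord inequalities parametrized by $\tau\in(0,1)$, I will introduce
\[
\Phi(a,b,\tau)\eqdef\frac{f(\tau a+(1-\tau)b)-f(a)}{\tau a+(1-\tau)b-a}-\frac{f(\tau a+(1-\tau)b)-f(b)}{\tau a+(1-\tau)b-b}\,,
\]
continuously extended to $\tau\in\{0,1\}$ via $\Phi(a,b,0)=s_f(a,b)-f'(b)$ and $\Phi(a,b,1)=f'(a)-s_f(a,b)$, so that $\Phi\in\cC^0(\RR^2\times[0,1])$. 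The strict Oleinik condition~\eqref{gLax-shock}, \emph{including} its outer inequalities handling the endpoints, yields $\Phi(\uu_-,\uu_+,\cdot)>0$ uniformly on $[0,1]$; by continuity and compactness $\Phi>0$ on a full neighborhood of $\{(\uu_-,\uu_+)\}\times[0,1]$, and choosing $\epsilon$ small enough that $(u_l(t),u_r(t))$ remains in this neighborhood for every $t\geq 0$ concludes the verification of~\eqref{gLax}.
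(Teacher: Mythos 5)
Your proposal is correct and follows essentially the same route as the paper: extension via Lemma~\ref{L.multivalued}, two applications of Proposition~\ref{P.constant} with amplification $\sqrt{C_0}$, gluing along the Rankine--Hugoniot ODE for $\psi$, and a continuity/compactness argument on $\{\uu_-\}\times\{\uu_+\}\times[0,1]$ to propagate the strict Oleinik condition. Your function $\Phi$ is exactly the paper's $S_f(a,b,\tau)=s_f(a,\tau a+(1-\tau)b)-s_f(b,\tau a+(1-\tau)b)$ written in chord form, so even the treatment of the entropy condition coincides up to presentation.
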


\begin{Remark}\label{Rk.constant-shock}
Theorem~\ref{Th.shock} is a direct counterpart to Proposition~\ref{P.constant}. We could also derive from it an analogous to Corollary~\ref{C.constant}. Likewise as in Proposition~\ref{P.constant-classical} we could relax totally or partly hypothesis~\eqref{hyp-convex-shock} if~\eqref{init-condition-shock} is strengthened. This would lead to four different versions of Theorem~\ref{Th.shock}. We could also provide a counterpart to Proposition~\ref{P.constant-conditional}.

Modifications required to prove the foregoing claims are straightforward and we have chosen to omit them so as to avoid redundancy.
\end{Remark}

\begin{Remark}
Note that expressed in classical stability terminology (see for instance~\cite{Henry-geometric}) we have proved orbital stability with asymptotic phase. We stress however that the role of phase shifts is here deeper than in the classical stability analysis of smooth waves since it is not only required to provide decay of suitable norms in large-time but also to ensure that these norms are finite locally in time. In particular here there is no freedom, even in finite time, in the definition of phase shifts. See~\cite[Section~4.1]{JNRYZ} and~\cite{DR2} for related (more elaborate) discussions.
\end{Remark}

\begin{proof}[Proof of Theorem~\ref{Th.shock}.] 
The proof of Theorem~\ref{Th.shock} follows closely the construction given in the proof of Proposition~\ref{P.small-shock}. We also assume henceforth that $\psi_0=0$, without loss of generality. Using Lemma~\ref{L.multivalued} and Proposition~\ref{P.constant}, we find that for $\epsilon>0$ sufficiently small and for any $v_0\in BUC^1(\RR^\star)$ satisfying~\eqref{init-condition-shock},
there exist $u_\pm\in BUC^1(\RR^+\times\RR)$ global classical solutions to~\eqref{eq-u}  with initial data $u_\pm\id{t=0}=\uu_\pm+v_{0,\pm}$ and satisfying the desired estimates.
We can now construct the solution, $u$, through~\eqref{patch} where $\psi$ is defined by the differential equation
\[\psi'(t)\,=\, s_f(u_-(t,\psi(t)),u_+(t,\psi(t)))\,,\]
where $s_f$ is defined in~\eqref{slope-function}, so that the Rankine-Hugoniot condition as well as the desired bounds on $\psi$ hold since for any $t\geq 0$,
\[\psi'(t)-\sigma\,=\,s_f(u_-(t,\psi(t)),u_+(t,\psi(t)))-s_f(\uu_-,\uu_+)\,.\]
Then the last estimates on $\psi$ are obtained by integration with 
\[
\psi_\infty=\int_0^\infty (\psi'(t)-\sigma)\,\dd t\,.
\]

To achieve the proof of Theorem~\ref{Th.shock} we only need to ensure that by lessening $\epsilon$ further if necessary formula~\eqref{patch} ensures~\eqref{gLax}. For this purpose we consider 
\[
S_f:\,\RR\times\RR\times[0,1]\to\RR,\qquad
(a,b,\tau)\mapsto
s_f(a,\tau\,a+(1-\tau)\,b)
-s_f(b,\tau\,a+(1-\tau)\,b)
\]
and observe that it is continuous. Since $\{\uu_-\}\times\{\uu_+\}\times[0,1]$ is compact and for any $\tau\in[0,1]$, $S_f(\uu_-,\uu_+,\tau)>0$, one may ensure that provided $\epsilon$ is small enough, for any $(a,b)$ such that $|a-\uu_-|\leq C_0\epsilon$ and $|b-\uu_+|\leq C_0\epsilon$, and any $\tau\in[0,1]$, $S_f(a,b,\tau)>0$. From this stems~\eqref{gLax} for $u$ built from~\eqref{patch}, and the proof is complete.
\end{proof}

We now prove that the exponential decay of higher derivatives holds provided we assume the stronger (symmetric) smallness condition on the first derivative.
\begin{Proposition}\label{P.shock-higher-derivatives}
Let $k\in\NN$, $k\geq2$, $f\in\cC^{k+1}(\RR),\ g\in \cC^k(\RR)$ and $(\sigma,\uu_-,\uu_+)\in\RR^3$ satisfying~\eqref{equilibrium-shock}-\eqref{RH-shock}-\eqref{gLax-shock}-\eqref{stab-spec-shock}. There exists $\epsilon>0$ and $C_k$ such that for any $\psi_0\in\RR$ and $\tv_0\in BUC^k(\RR^\star)$ satisfying
\begin{equation}\label{init-condition-shock-isotropic}
\Norm{\tv_0}_{L^\infty(\RR^\star)}\leq\epsilon \quad  \text{ and } \quad 
\Norm{\d_x\tv_0}_{L^{\infty}(\RR^\star)}\leq\epsilon\,,
\end{equation}
there exist $\psi\in\cC^{k+1}(\RR^+)$ and $u\in BUC^k(\Omega^\psi)$ with initial data $\psi(0)=\psi_0$ and $u(0,\cdot)=(\uU+\tv_0)(\cdot+\psi_0)$ such that $u$ is an entropy solution to~\eqref{eq-u} and satisfies for any $t\geq 0$ and any $j\in \{0,\dots,k\}$
\begin{align*}
\Norm{u(t,\cdot-\psi(t))-\uu_\pm}_{W^{j,\infty}(\RR^\pm)}&\leq \Norm{\tv_0}_{W^{j,\infty}(\RR^\pm)} C_k\,e^{g'(\uu_\pm)\,t}\,,\\
\abs{\psi^{(j+1)}(t)}&\leq \Norm{\tv_0}_{W^{j,\infty}(\RR^\star)}\,C_k\, e^{\max(\{g'(\uu_+),g'(\uu_-)\})\, t}\,.
\end{align*}
\end{Proposition}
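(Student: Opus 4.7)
The plan is to follow the three-step construction used in the proof of Theorem~\ref{Th.shock}---one-sided extension of the initial data, shockless $\RR$-problem on each side, then patching via the slope-function ODE---upgrading each step to the $BUC^k$ setting. The symmetric smallness hypothesis~\eqref{init-condition-shock-isotropic} is tailored so that Proposition~\ref{P.constant-classical} applies on each side (hence dispensing with the convexity assumption on $f$) and Proposition~\ref{P.high-order} then yields higher-order exponential decay. The first step is a $BUC^k$ analogue of Lemma~\ref{L.multivalued}: given $\tv_0 \in BUC^k(\RR^\star)$, construct extensions $v_{0,\pm}\in BUC^k(\RR)$ agreeing with $\tv_0$ on $\RR^\pm$ and satisfying, for every $j\in\{0,\dots,k\}$,
\[
\Norm{\d_x^j v_{0,\pm}}_{L^\infty(\RR)} \leq C_k\,\Norm{\d_x^j \tv_0}_{L^\infty(\RR^\pm)}\,.
\]
I would achieve this by combining the degree-one construction of Lemma~\ref{L.multivalued} (used on the $L^\infty$ and $W^{1,\infty}$ levels) with a degree-$(k-1)$ Taylor polynomial matching the one-sided derivatives of $\tv_0$ at $0^\pm$, multiplied by a cut-off whose width is tuned in the spirit of Lemma~\ref{L.multivalued} so that intermediate derivatives inherit the expected bound.

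Once the extensions are built, Proposition~\ref{P.constant-classical} applied at $\uu_\pm$ produces global classical solutions $u_\pm\in BUC^1(\RR_+\times\RR)$ with exponential $L^\infty$ and $W^{1,\infty}$ decay, and Proposition~\ref{P.high-order} promotes them to $u_\pm\in BUC^k(\RR_+\times\RR)$ with $\Norm{\d_x^j u_\pm(t,\cdot)}_{L^\infty(\RR)} \leq C_k \Norm{\d_x^j v_{0,\pm}}_{L^\infty(\RR)}\,e^{g'(\uu_\pm)\,t}$ for $1\leq j\leq k$. Combined with Step~1 these yield the claimed bounds on $u(t,\cdot-\psi(t))-\uu_\pm$ on $\RR^\pm$. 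The shock curve $\psi\in\cC^{k+1}(\RR_+)$ is defined by $\psi(0)=\psi_0$ together with the slope-function ODE $\psi'(t)=s_f(u_-(t,\psi(t)),u_+(t,\psi(t)))$; since $s_f\in\cC^k$ and the $u_\pm$ are $BUC^k$ in space and (via the PDE) in time, Cauchy--Lipschitz provides the required regularity. The patched function defined by~\eqref{patch} then satisfies the Rankine--Hugoniot condition by construction, and the entropy condition~\eqref{gLax} is recovered, up to possibly shrinking $\epsilon$, by the compactness argument on the auxiliary function $S_f$ already used at the end of the proof of Theorem~\ref{Th.shock}.

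The main technical work, and the step I expect to be the principal obstacle, is the derivation of the bound on $\psi^{(j+1)}$. I would proceed by induction on $j$, the case $j=0$ being the estimate already present in Theorem~\ref{Th.shock}. For the inductive step, differentiating the slope-function ODE $j$ times via Fa\`a di Bruno expresses $\psi^{(j+1)}(t)$ as a polynomial in the mixed partial derivatives $\d_t^m\d_x^\ell u_\pm(t,\psi(t))$ with $m+\ell\leq j$ and in the lower derivatives $\psi^{(i)}(t)$ with $i\leq j$. Using the equation $\d_t u_\pm = -f'(u_\pm)\d_x u_\pm + \bigl(g(u_\pm)-g(\uu_\pm)\bigr)$ iteratively, each such mixed derivative rewrites as a polynomial in the spatial derivatives $\d_x^\ell u_\pm$ of order $\leq j$ and in $u_\pm-\uu_\pm$, with coefficients smooth in $u_\pm$. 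Each factor then decays at rate $e^{g'(\uu_\pm)\,t}$ by the bounds of the previous step, and combined with the inductive hypothesis on the lower $\psi^{(i)}$ this yields exponential decay at the announced rate $e^{\max(\{g'(\uu_+),g'(\uu_-)\})\,t}$. All combinatorial constants and $C^k$-norms of $f,g$ on the $C_0\,\epsilon$-neighborhoods of $\uu_\pm$ are absorbed into $C_k$.
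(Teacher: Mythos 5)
Your proposal follows the paper's proof essentially verbatim: the paper likewise observes that the only new ingredient relative to Theorem~\ref{Th.shock} is a $BUC^k$ extension lemma with the graded bounds $\Norm{\d_x^j v_{0,\pm}}_{L^\infty(\RR)}\leq C_k\,\Norm{\d_x^j \tv_0}_{L^\infty(\RR^\pm)}$ for every $j\leq k$ (Lemma~\ref{L.multivalued-high-order}, obtained there from the standard extension theorem of Adams rather than by a hand-built Taylor/cut-off construction), after which one simply replaces Lemma~\ref{L.multivalued} and Proposition~\ref{P.constant} by this lemma and Proposition~\ref{P.high-order} in the proof of Theorem~\ref{Th.shock}. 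The one caution on your variant is that $\d_x^j$ of a degree-$(k-1)$ Taylor polynomial involves the one-sided derivatives of $\tv_0$ of every order from $j$ up to $k-1$, so the derivative-by-derivative bounds you correctly identify as the target are not automatic from that construction, whereas a higher-order reflection of the form $\sum_i\lambda_i\,\tv_0(-x/i)$ delivers them immediately.
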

\begin{proof}
The result does not follow directly from Proposition~\ref{P.high-order} applied to $u_\pm$ defined in the proof of Theorem~\ref{Th.shock}, because the initial data provided by Lemma~\ref{L.multivalued} is not sufficiently regular. Here we rather rely on the following Lemma deduced from a standard extension theorem~\cite[Theorem~4.26]{Adams}.
\begin{Lemma}\label{L.multivalued-high-order}
Let $k\in\NN$, $k\geq 2$. There exists $C_k>0$ such that for any $v_0\in BUC^k(\RR^\star)$, there exist
$v_{0,\pm}\in \cC^k(\RR)$ satisfying
\[v_0(x)=\begin{cases} 
v_{0,+}(x)&\text{ if } x>0,\\
v_{0,-}(x) &\text{ if } x<0,
\end{cases}
\]
and for any $j\in\NN$, $0\leq j\leq k$,
\begin{equation}\label{control-linear}
\Norm{\d_x^j v_{0,\pm}}_{L^\infty(\RR)}\leq  \Norm{\d_x^j v_0}_{L^\infty(\RR^\pm)}C_k\,.
\end{equation}
\end{Lemma}
Replacing Lemma~\ref{L.multivalued} and Proposition~\ref{P.constant} with Lemma~\ref{L.multivalued-high-order} and~Proposition~\ref{P.high-order}, the proof of Proposition~\ref{P.shock-higher-derivatives} is then almost identical to the proof of Theorem~\ref{Th.shock}. 
\end{proof}

We can also obtain a counterpart to Proposition~\ref{P.shock-higher-derivatives} with the asymmetric smallness assumption on first-order derivatives.
\begin{Proposition}\label{P.shock-higher-derivatives-2}
Let $k\in\NN$, $k\geq2$, $f\in\cC^{k+1}(\RR),\ g\in \cC^k(\RR)$ and $(\sigma,\uu_-,\uu_+)\in\RR^3$ satisfying~\eqref{equilibrium-shock}-\eqref{RH-shock}-\eqref{gLax-shock}-\eqref{stab-spec-shock} and~\eqref{hyp-convex-shock}. There exist $\epsilon>0$ and $C_k>0$ such that for any $\psi_0\in\RR$ and $\tv_0\in BUC^k(\RR^\star)$ satisfying~\eqref{init-condition-shock},
the entropy solution defined in Theorem~\ref{Th.shock} satisfies $u\in BUC^k(\Omega^\psi)$, $\psi\in\cC^{k+1}(\RR^+)$ and for any $t\geq 0$ and any $j\in \{1,\dots,k\}$
\begin{align*}
\big\lVert\d_x^j u(t,\cdot&-\psi(t))\big\rVert_{L^\infty(\RR^\pm)}\\
&\leq 
\Norm{\tv_0}_{W^{j,\infty}(\RR^\pm)}C_k
(1+\Norm{\d_x^2\tv_0}_{L^\infty(\RR^\pm)}^{j-1})
e^{C_k\,\|\tv_0\|_{W^{1,\infty}(\RR^\pm)}\,(1+\|\tv_0\|_{W^{1,\infty}(\RR^\pm)}^{j-1})}\,
\, 
\,e^{g'(\uu_\pm)\,t}\,,\\
\abs{\psi^{(j+1)}(t)}
&\leq \Norm{\tv_0}_{W^{j,\infty}(\RR^\star)}C_k
(1+\Norm{\d_x^2\tv_0}_{L^\infty(\RR^\star)}^{j-1})
e^{C_k\,\|\tv_0\|_{W^{1,\infty}(\RR^\star)}\,(1+\|\tv_0\|_{W^{1,\infty}(\RR^\star)}^{j-1})}\,
\, e^{\max(\{g'(\uu_+),g'(\uu_-)\})\, t}\,.
\end{align*}
\end{Proposition}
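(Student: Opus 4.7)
The plan is to mimic the proof of Proposition~\ref{P.shock-higher-derivatives}, which used Lemma~\ref{L.multivalued-high-order} together with Proposition~\ref{P.high-order} in place of Lemma~\ref{L.multivalued} together with Proposition~\ref{P.constant} in the proof of Theorem~\ref{Th.shock}. Here, however, we want to keep the asymmetric smallness hypothesis~\eqref{init-condition-shock}. Since Proposition~\ref{P.high-order} accepts either the hypotheses of Proposition~\ref{P.constant-classical} or those of Proposition~\ref{P.constant}, the scheme can go through provided we produce extensions $v_{0,\pm}$ of $\tv_0$ that simultaneously (i) preserve the asymmetric smallness of $\d_x v_0$ (so that the hypotheses of Proposition~\ref{P.constant} hold on each side), and (ii) control $\|\d_x^j v_{0,\pm}\|_{L^\infty(\RR)}$ by $\|\tv_0\|_{W^{j,\infty}(\RR^\pm)}$ up to a factor depending polynomially on lower-order norms (so that Proposition~\ref{P.high-order} yields a useful bound).

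The main technical step is therefore a refined extension lemma that hybridizes Lemma~\ref{L.multivalued} and Lemma~\ref{L.multivalued-high-order}. I would construct $v_{0,+}$ on the opposite side $\{x<0\}$ by prescribing a polynomial $P(x)=\sum_{j=0}^{k}\frac{\d_x^j v_0(0^+)}{j!}x^j$ on a buffer zone $(-\delta,0]$, suitably cut off and continued by a constant plateau further to the left. As in Lemma~\ref{L.multivalued}, the width $\delta$ must be chosen proportional to $\|\tv_0\|_{L^\infty(\RR^+)}/\max(1,|\d_x v_0(0^+)|)$ in order to control $\|(\sign(f''(\uu_+))\d_xv_{0,+})_-\|_\infty$ with a harmless amplification and to respect the $L^\infty$ bound. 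With this fixed $\delta$, the $m$-th derivative on the buffer is a sum of scales $\|\d_x^j v_0\|_{L^\infty(\RR^+)}\delta^{j-m}$ for $j\leq m$; after replacing $\delta^{-1}$ by its upper bound and interpolating, one obtains $\|\d_x^j v_{0,\pm}\|_{L^\infty(\RR)} \leq C_k\,\|\tv_0\|_{W^{j,\infty}(\RR^\pm)}\bigl(1+\|\d_x^2 \tv_0\|_{L^\infty(\RR^\pm)}^{j-1}\bigr)$, which is exactly the polynomial prefactor appearing in the statement.

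With such extensions in hand, Proposition~\ref{P.constant} applied to $u_\pm\id{t=0}=\uu_\pm+v_{0,\pm}$ produces global $BUC^1$ solutions satisfying the $W^{1,\infty}$-level bounds already used in Theorem~\ref{Th.shock}. Propagation of $BUC^k$ regularity then follows from Proposition~\ref{P.high-order}, whose exponential-type factor $e^{C_k\|v_{0,\pm}\|_{W^{1,\infty}}(1+\|v_{0,\pm}\|_{W^{1,\infty}}^{k-1})}$ is precisely the exponential in the statement. Combining this with the above extension bound yields the claimed estimate on $\|\d_x^j u(t,\cdot-\psi(t))\|_{L^\infty(\RR^\pm)}$.

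The phase $\psi$ is defined, exactly as in the proof of Theorem~\ref{Th.shock}, by the Rankine-Hugoniot ordinary differential equation $\psi'(t)=s_f(u_-(t,\psi(t)),u_+(t,\psi(t)))$, with $s_f$ as in~\eqref{slope-function}; the patching~\eqref{patch} and the entropy condition~\eqref{gLax} are inherited from that proof without change. Differentiating this ODE $j$ times and applying the Faà di Bruno formula expresses $\psi^{(j+1)}$ as a polynomial in $\d_x^m u_\pm(t,\psi(t))$ and $\psi^{(\ell)}(t)$ with $m,\ell\leq j$, and with coefficients that are smooth functions of $u_\pm(t,\psi(t))$. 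A straightforward induction on $j$, plugging in the just-established bounds on $\d_x^m u_\pm$ and the already-known bound on $\psi'-\sigma$ from Theorem~\ref{Th.shock}, delivers the announced exponential decay of $|\psi^{(j+1)}(t)|$ at rate $\max(\{g'(\uu_+),g'(\uu_-)\})$. The main obstacle, as indicated, is constructing and bookkeeping the refined extension lemma: striking the balance between preserving asymmetric smallness (which constrains $\delta$ from below by $\|\tv_0\|_\infty/\max(1,\|\d_x\tv_0\|_\infty)$) and controlling high-order derivatives (which would favor taking $\delta$ larger) is what produces the exact polynomial prefactor in the statement.
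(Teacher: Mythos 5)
Your overall architecture matches the paper's: reduce everything to a refined ``anisotropic'' high-order extension lemma, feed the extensions into Proposition~\ref{P.high-order}, and glue along the Rankine--Hugoniot curve exactly as in Theorem~\ref{Th.shock}; the Fa\`a di Bruno induction for $\psi^{(j+1)}$ is also fine. The gap is in the extension lemma itself, which is the whole point of this proposition. You keep the buffer width of Lemma~\ref{L.multivalued}, $\delta\sim\Norm{\tv_0}_{L^\infty(\RR^+)}/\max(1,|\d_x\tv_0(0^+)|)$, which carries no information about second and higher derivatives. With a degree-$k$ Taylor polynomial on $(-\delta,0]$ this breaks the very properties that made Lemma~\ref{L.multivalued} work: (i) the first derivative drifts across the buffer by an amount of order $\delta\,|\d_x^2\tv_0(0^+)|$, which is unbounded for data satisfying~\eqref{init-condition-shock} (only the one-sided part of $\d_x\tv_0$ is small there, not $\d_x^2\tv_0$); take $\d_x\tv_0(0^+)=0$ and $\d_x^2\tv_0(0^+)$ large and the negative part of $\sign(f''(\uu_+))\d_x v_{0,+}$ exceeds the threshold needed to invoke Proposition~\ref{P.constant}, so Proposition~\ref{P.high-order} cannot be applied at all; (ii) the $L^\infty$ deviation picks up uncontrolled terms $\delta^j|\d_x^j\tv_0(0^+)|/j!$; (iii) the claimed bound $\delta^{-(j-1)}\lesssim 1+\Norm{\d_x^2\tv_0}_{L^\infty}^{j-1}$ is false, since with your $\delta$ one has $\delta^{-1}\to\infty$ as $\Norm{\tv_0}_{L^\infty}\to0$ at fixed $W^{j,\infty}$ norms. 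Moreover, cutting off the Taylor polynomial itself (rather than its derivative) contaminates $\d_x v_{0,+}$ with a term $\delta^{-1}\chi'\cdot(P-\tv_0(0^+))$ of uncontrolled sign, again destroying the one-sided bound. Your description of the ``balance'' is also backwards: both the $L^\infty$ bound and the preservation of asymmetric smallness constrain $\delta$ from \emph{above}, while it is the high-order bounds that require $\delta^{-1}$ to stay controlled.

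The paper's construction fixes both issues at once. First, $\delta$ is taken of order $\epsilon/(1+\Norm{\d_x\tv_0}_{W^{1,\infty}(\RR^+)})$, so the drift of the first derivative over the buffer is $O(\epsilon)$ and $\delta^{-(j-1)}\leq C_{k,\epsilon}\,(1+\Norm{\d_x\tv_0}_{W^{1,\infty}}^{j-1})$, which after interpolating $\Norm{\d_x\tv_0}_{L^\infty}$ against $\Norm{\tv_0}_{L^\infty}\leq\epsilon$ and $\Norm{\d_x^2\tv_0}_{L^\infty}$ produces exactly the polynomial prefactor in the statement. Second, the extension is $\tv_0(0^+)+\int_0^x w_{0,+}(y)\,\chi(\delta^{-1}y)\,\dd y$, where $w_{0,+}$ is the standard $BUC^{k-1}$ extension of the \emph{derivative} $\d_x\tv_0$ from Lemma~\ref{L.multivalued-high-order}: since $\chi$ takes values in $[0,1]$, multiplying the derivative by $\chi(\delta^{-1}\cdot)$ manifestly does not increase the negative part of $\sign(f''(\uu_\pm))\,\d_x\tv_{0,\pm}$. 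With these two modifications your scheme goes through; as written, the extensions you build need not satisfy the hypotheses of Proposition~\ref{P.constant}, and the key quantitative bound you assert for them does not hold.
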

\begin{proof}
Although we follow the same strategy as in the earlier results, we need to ensure that the regular extensions $\tv_{0,\pm}\in BUC^k(\RR) $ preserve the asymmetric smallness hypothesis~\eqref{init-condition-shock}. To this aim, we introduce a smooth cut-off function, $\chi$, such that $\chi(x)=0$ for $|x|\geq 2/3$, $\chi(x)=1$ for $|x|\leq 1/3$ and $\chi(x)\in[0,1]$ for $x\in\RR$, and define
\[ \tv_{0,+}(x)=\begin{cases}
\tv_0(0^+)+ \int_0^{-\delta} w_{0,+}(y)\chi(\delta^{-1}y) \dd y&\text{ if } x\in(-\infty,-\delta]\,,\\
\tv_0(0^+)+ \int_0^x w_{0,+}(y)\chi(\delta^{-1}y) \dd y&\text{ if } x\in(-\delta,0]\,,\\
\tv_0(x) &\text{ if } x>0\,.
\end{cases}
\]
where $w_{0,+}$ is the extension associated with $\d_x\tv_0$ provided by Lemma~\ref{L.multivalued-high-order}. When choosing 
\[\delta=\min\left(\left\{\frac{c_0\,\epsilon}{1+\Norm{\d_x\tv_0}_{W^{1,\infty}(\RR^+)}},1\right\}\right),\]
with $c_0>0$ sufficiently small and defining symmetrically $\tv_{0,-}$, we derive the following Lemma.
\begin{Lemma}\label{L.multivalued-high-order-anisotropic}
Let $k\in\NN$, $k\geq 2$, $C_0>1$ and $\epsilon>0$. There exists $C_k>0$ such that for any $\tv_0\in BUC^k(\RR^\star)$ satisfying~\eqref{init-condition-shock}, there exist
$\tv_{0,\pm}\in BUC^k(\RR)$ satisfying
\[\tv_0(x)=\begin{cases} 
\tv_{0,+}(x)&\text{ if } x>0,\\
\tv_{0,-}(x) &\text{ if } x<0,
\end{cases}
\]
and the estimates
\[
\begin{array}{rl}
\Norm{\tv_{0,\pm}}_{L^\infty(\RR)}&\leq \min(\{C_0\epsilon,\Norm{\tv_0}_{W^{1,\infty}(\RR^\star)}C_k\})\,,\\
\Norm{(\sign(f''(\uu_+))\,\d_x\tv_{0,+})_-}_{L^{\infty}(\RR)}&\leq C_0\epsilon\,,\\
\Norm{(\sign(f''(\uu_-))\,\d_x\tv_{0,-})_-}_{L^{\infty}(\RR)}&\leq C_0\epsilon\,,
\end{array}
\]
and for any $j\in\NN$, $1\leq j\leq k$,
\[
\Norm{\tv_{0,\pm}}_{W^{j,\infty}(\RR)}\leq  \Norm{\tv_0}_{W^{j,\infty}(\RR^\pm)}(1+\Norm{\d_x\tv_0}_{W^{1,\infty}(\RR^\pm)}^{j-1})C_k\,.
\]
\end{Lemma}
We can now follow the proof of Theorem~\ref{Th.shock}, replacing Lemma~\ref{L.multivalued} and Proposition~\ref{P.constant} with Lemma~\ref{L.multivalued-high-order-anisotropic} and~Proposition~\ref{P.high-order}. .
\end{proof}

\begin{Remark}
The non-uniqueness of the intermediate stage of our proofs is particularly striking here, since even for the same initial data, depending on the level of regularity we aim at, we build distinct extended solutions. Yet in the end, as discussed in Remark~\ref{R.Lax-vs-Oleinik}, the parts actually used in the final gluing process are indeed independent of choices in the extension as a consequence of Lax's condition, and uniqueness holds by the theory of Kru\v zkov~\cite{Kruzhkov}.
\end{Remark}

\subsection{Perturbation by small shocks}\label{s:large_and_small_shocks}

We now elaborate on Proposition~\ref{P.small-shock}  and Theorem~\ref{Th.shock} and perturb a spectrally stable strictly entropy-admissible Riemann shock of~\eqref{eq-u} with a perturbation containing one shock. For concreteness and concision we assume that the small shock is located on the left of the large shock, the opposite situation being deduced by symmetry considerations. Since the Riemann shock is strictly entropy-admissible, sufficiently small perturbations with a small shock will produce two paths of discontinuity eventually merging in a single one, the small shock being essentially absorbed by the large one. 

We follow the position of the large shock with $\psi:\ \RR\to\RR$ and the position of the small shock, as long as it persists, with $\psi_s:\ [0,t^\star]\to\RR$ where $t^\star>0$, $\psi_s(t^\star)=\psi(t^\star)$ and, for any $t\in[0,t^\star)$, $\psi_s(t)<\psi(t)$. In particular we seek for a solution that is a classical solution on the domain
\[
\Omega_{\psi,\psi_s}\eqdef\RR_+\times\RR\setminus \Big(\{\,(t,\psi_s(t))\,|\,t\in[0,t^\star]\,\}\cup\{\,(t,\psi(t))\,|\,t\geq 0\}\Big)\,
\]
(see Figure~\ref{fig:sketch_shocks}).

\begin{figure}[tbph]
\subfigure[Original domain]{
\includegraphics[width=.5\textwidth]{./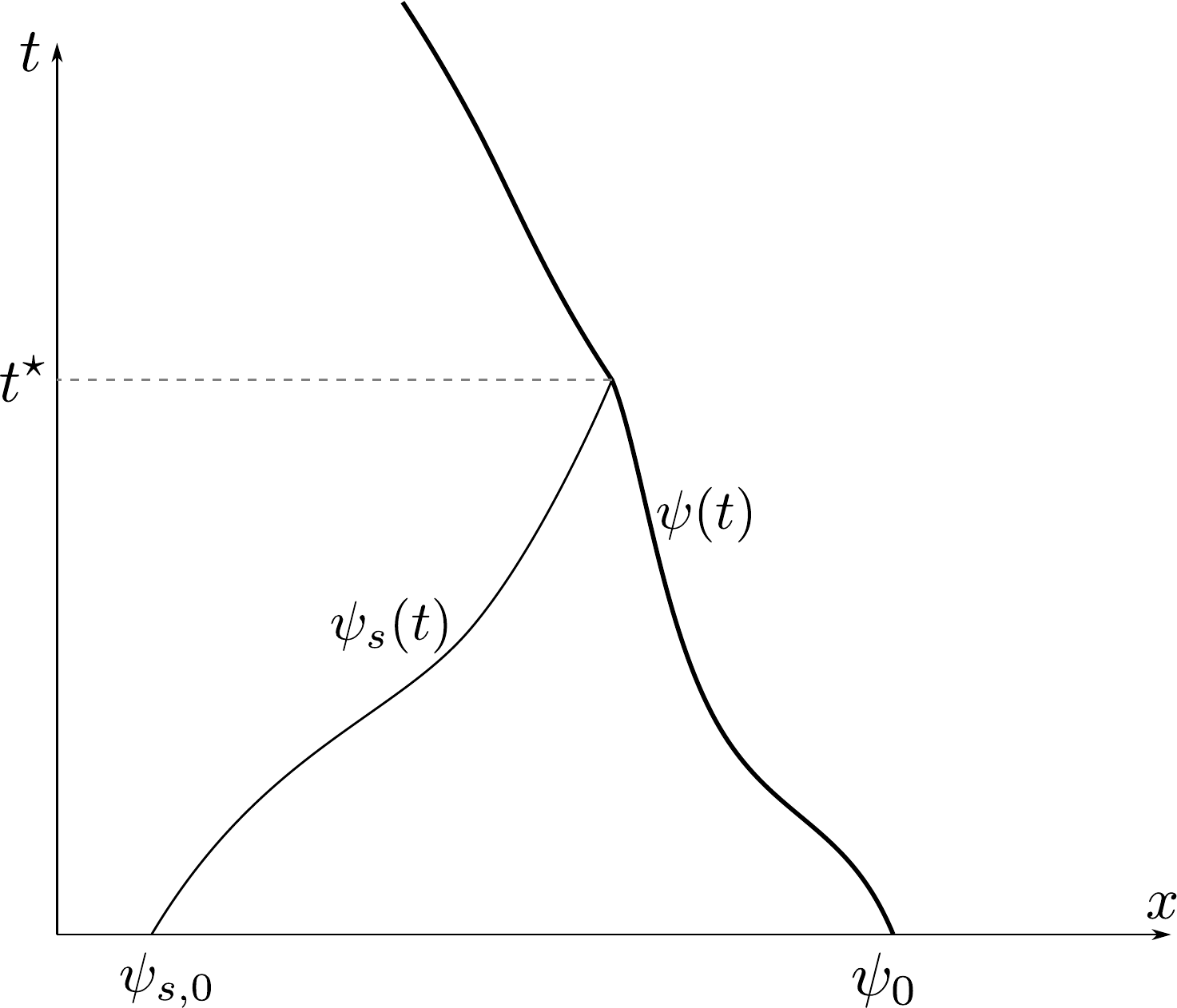}
}
\subfigure[Straightened domain]{
\includegraphics[width=.5\textwidth]{./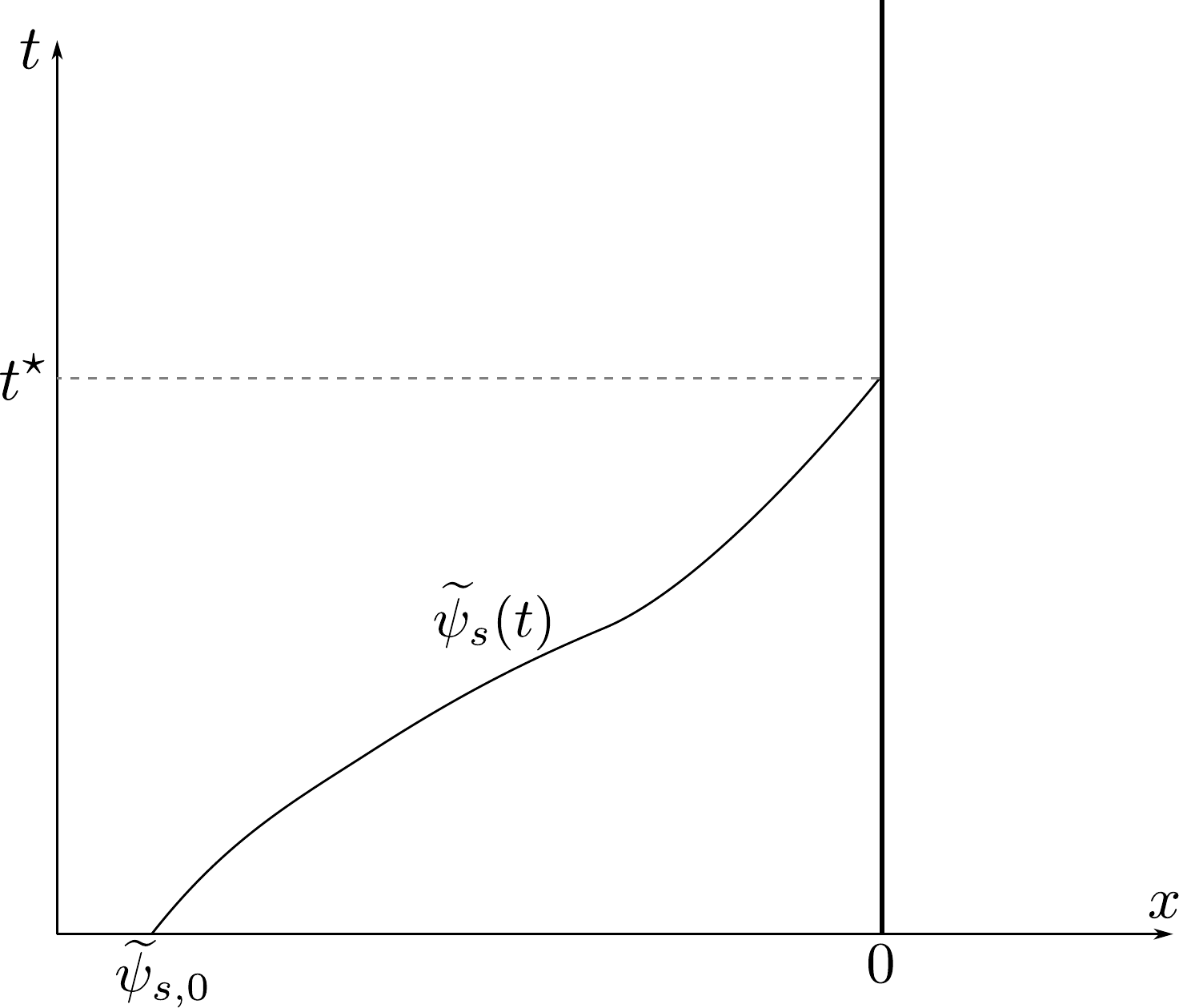}
}
\caption{Sketch of the shock paths.}
\label{fig:sketch_shocks}
\end{figure}

\begin{Theorem}\label{Th.shocks}
Let $f,g\in \mathcal{C}^2(\RR)$ and $(\sigma,\uu_-,\uu_+)\in\RR^3$ satisfying~\eqref{equilibrium-shock}-\eqref{RH-shock}-\eqref{gLax-shock}-\eqref{stab-spec-shock} and~\eqref{hyp-convex-shock}.
For any $C_0>1$, there exists $\epsilon>0$ and $C>0$ such that for any $\tpsi_{s,0}<0$ and $\psi_{0}\in\RR$ and any $\tv_0\in BUC^1(\RR^\star\setminus\{\tpsi_{s,0}\})$ satisfying
\begin{equation}\label{init-condition-shocks}
\begin{array}{rl}
\Norm{\tv_0}_{L^\infty(\RR^\star\setminus\{\tpsi_{s,0}\})}&\leq\epsilon\,,\\
\Norm{(\sign(f''(\uu_+))\,\d_x\tv_0)_-}_{L^{\infty}(\RR^+)}&\leq\epsilon\,,\\
\Norm{(\sign(f''(\uu_-))\,\d_x\tv_0)_-}_{L^{\infty}(\RR^-\setminus\{\tpsi_{s,0}\})}&\leq\epsilon\,,
\end{array}
\end{equation}
there exist a time $t^\star\in(0,+\infty)$,
\begin{itemize} 
\item a $\cC^0$ function $\psi:\,\RR_+\to\RR$ that is $\cC^2$ on $\RR_+\setminus\{t^\star\}$  and such that $\psi(0)=\psi_0$, 
\item a $\cC^2$ function $\tpsi_s:\,[0,t^\star]\to \RR_-$ such that $\tpsi_s$ is negative on $[0,t^\star)$, $\tpsi_s(0)=\tpsi_{s,0}$ and $\tpsi_s(t^\star)=0$,
\end{itemize}
so that, with $\psi_s=\psi_{|[0,t^\star]}+\tpsi_s$, the entropy solution to~\eqref{eq-u}, $u$, generated by the initial data $(\uU+\tv_0)(\cdot+\psi_{0})$ belongs to $BUC^1(\Omega_{\psi,\psi_s})$ and satisfies\footnote{In the first three inequalities we sacrifice consistency to the sake of concision and readilibility and write $\RR^-$ even when $0\leq t< t^\star$ and notational conventions used elsewhere would require $\RR^-\setminus\{\tpsi_s(t)\}$ or $\RR^-\setminus\{\tpsi_{s,0}\}$.} for any $t\geq 0$
\begin{align*}
\Norm{u(t,\cdot-\psi(t))-\uu_\pm}_{L^{\infty}(\RR^\pm)}&\leq \Norm{\tv_0}_{L^{\infty}(\RR^\pm)} C_0\,e^{g'(\uu_\pm)\,t}\,,\\
\Norm{(\sign(f''(\uu_\pm))\,\d_x u(t,\cdot-\psi(t)))_-}_{L^{\infty}(\RR^\pm)}&\leq \Norm{(\sign(f''(\uu_\pm))\,\d_x\tv_0)_-}_{L^{\infty}(\RR^\pm)} C_0\,e^{g'(\uu_\pm)\,t}\,,\\
\Norm{\d_x u(t,\cdot-\psi(t))}_{L^{\infty}(\RR^\pm)}&\leq \Norm{\d_x \tv_0}_{L^{\infty}(\RR^\pm)} C_0\,e^{g'(\uu_\pm)\,t}\,,\\
\abs{\psi_s'(t)-f'(\uu_-)}&\leq \Norm{\tv_0}_{L^{\infty}(\RR^-\setminus\{\tpsi_{s,0}\})}\,C\, e^{g'(\uu_-)\, t}\,,\qquad t\leq t^\star\,,\\
\abs{\psi'(t)-\sigma}&\leq \Norm{\tv_0}_{L^{\infty}(\RR^\star\setminus\{\tpsi_{s,0}\})}\,C\, e^{\max(\{g'(\uu_+),g'(\uu_-)\})\, t}\,,
\end{align*}
and moreover there exists $\psi_\infty$ such that 
\[
\abs{\psi_\infty-\psi_{0}}\,\leq \Norm{\tv_0}_{L^{\infty}(\RR^\star\setminus\{\tpsi_{s,0}\})} C\,,
\]
and for any $t\geq 0$
\[
\abs{\psi(t)-\psi_\infty-t\,\sigma}\,\leq \Norm{\tv_0}_{L^{\infty}(\RR^\star\setminus\{\tpsi_{s,0}\})} C\, e^{\max(\{g'(\uu_+),g'(\uu_-)\})\, t}\,.
\]
\end{Theorem}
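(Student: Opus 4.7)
The plan is to follow the gluing strategy of the proof of Theorem~\ref{Th.shock}, but with \emph{three} extensions and \emph{three} reference solutions of~\eqref{eq-u} instead of two, reflecting the three smooth regions of the perturbation. Assume $\psi_0=0$ without loss of generality. Using Lemma~\ref{L.multivalued} on each of the three smooth parts of $\tv_0$, namely on $(-\infty,\tpsi_{s,0})$, on $(\tpsi_{s,0},0)$ and on $(0,\infty)$, we produce extensions $v_{0,-},v_{0,m},v_{0,+}\in BUC^1(\RR)$ satisfying the asymmetric smallness conditions of Proposition~\ref{P.constant} with reference state $\uu_-$ for the first two and $\uu_+$ for the last. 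Three applications of Proposition~\ref{P.constant} then yield global classical solutions $u_-,u_m,u_+\in BUC^1(\RR^+\times\RR)$ satisfying the expected exponential decay estimates near the corresponding constant states.

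The two shock paths are defined as the maximal solutions of the Rankine--Hugoniot ODEs with $s_f$ as in~\eqref{slope-function}
\begin{align*}
\psi_s'(t)&=s_f(u_-(t,\psi_s(t)),u_m(t,\psi_s(t))),&\psi_s(0)&=\tpsi_{s,0},\\
\psi'(t)&=s_f(u_m(t,\psi(t)),u_+(t,\psi(t))),&\psi(0)&=0,
\end{align*}
and $t^\star$ is set as the first time $\psi_s(t^\star)=\psi(t^\star)$. Since $u_\pm-\uu_\pm$ and $u_m-\uu_-$ are $\cC\epsilon$-small in $L^\infty$ and $s_f$ is locally Lipschitz with $s_f(\uu_-,\uu_-)=f'(\uu_-)$ and $s_f(\uu_-,\uu_+)=\sigma$, one has $|\psi_s'-f'(\uu_-)|\leq C\epsilon$ and $|\psi'-\sigma|\leq C\epsilon$. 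The strict Lax inequality $f'(\uu_-)>\sigma$ of~\eqref{gLax-shock} then forces $\psi-\psi_s$ to decrease at rate at least $\tfrac12(f'(\uu_-)-\sigma)>0$ for $\epsilon$ small, so $t^\star$ is finite, bounded by $2|\tpsi_{s,0}|/(f'(\uu_-)-\sigma)$. On $[0,t^\star]$ we define $u$ by patching $u_-,u_m,u_+$ on the three regions separated by $\psi_s,\psi$; for $t\geq t^\star$ we continue $\psi$ through $\psi'(t)=s_f(u_-(t,\psi(t)),u_+(t,\psi(t)))$ with initial value $\psi(t^\star)$, and paste $u_-,u_+$ across $\psi$ exactly as in Theorem~\ref{Th.shock}.

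All the quantitative estimates on $u$, $\psi'$, $\psi_s'$ and on the asymptotic phase $\psi_\infty=\int_0^\infty(\psi'(t)-\sigma)\,\dd t$ are then read off from the corresponding identities
\[
\psi'(t)-\sigma=s_f(u_m,u_+)(t,\psi(t))-s_f(\uu_-,\uu_+)\quad\textrm{on }[0,t^\star],\qquad
=s_f(u_-,u_+)(t,\psi(t))-s_f(\uu_-,\uu_+)\quad\textrm{on }[t^\star,\infty),
\]
and analogously for $\psi_s'-f'(\uu_-)$, the right-hand sides inheriting the exponential decay of $u_\pm-\uu_\pm$ and $u_m-\uu_-$. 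Entropy admissibility is verified as in the proof of Theorem~\ref{Th.shock}: by continuity of the function $S_f$ introduced there and compactness, for $\epsilon$ sufficiently small $S_f$ remains positive when evaluated at $(u_-,u_m,\cdot)$ along $\psi_s$, at $(u_m,u_+,\cdot)$ along $\psi$ for $t<t^\star$, and at $(u_-,u_+,\cdot)$ along $\psi$ for $t>t^\star$, which yields Oleinik's condition~\eqref{gLax} on both curves. Kru\v zkov's theory then provides that $u$ is the unique entropy solution generated by the prescribed initial datum.

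The main obstacle is the treatment near the merging time $t^\star$. Two distinct issues arise: (i) verifying that the Lax condition for the post-merger large shock is preserved, which reduces to positivity of $S_f(\uu_-,\uu_+,\cdot)$ on $[0,1]$ already granted by~\eqref{gLax-shock}; and (ii) accepting that $\psi$ is typically only $\cC^0$ at $t^\star$, since $\psi'(t^{\star-})=s_f(u_m,u_+)(t^\star,\psi(t^\star))$ differs in general from $\psi'(t^{\star+})=s_f(u_-,u_+)(t^\star,\psi(t^\star))$ because $u_m(t^\star,\psi(t^\star))\neq u_-(t^\star,\psi(t^\star))$. This is consistent with the statement, which only claims $\psi\in\cC^2(\RR^+\setminus\{t^\star\})$. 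All constants $\epsilon$ and $C$ can be chosen to depend only on $f,g$ and the reference wave, uniformly in $\tpsi_{s,0}<0$ (which enters only through $t^\star$).
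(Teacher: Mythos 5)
Your overall architecture coincides with the paper's own proof: three extensions via Lemma~\ref{L.multivalued} and Proposition~\ref{P.constant} (the middle one anchored at $\uu_-$), two Rankine--Hugoniot ODEs defining the small and large shock paths, a finite merging time $t^\star$ forced by $f'(\uu_-)>\sigma$ together with $\tpsi_{s,0}<0$, continuation of the large shock past $t^\star$ using the two outer solutions, and all quantitative estimates read off from the Lipschitz continuity of $s_f$ and the exponential decay of $u_\pm-\uu_\pm$ and $u_m-\uu_-$. All of that is correct and is exactly what the paper does.

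There is, however, one genuine gap: the verification of entropy admissibility along the small shock path. You invoke the continuity-and-compactness argument on $S_f$ ``evaluated at $(u_-,u_m,\cdot)$ along $\psi_s$''. But the reference value there is $S_f(\uu_-,\uu_-,\tau)=s_f(\uu_-,\uu_-)-s_f(\uu_-,\uu_-)=0$ for every $\tau\in[0,1]$, so no strict sign can be propagated by continuity; that argument only works for the large shock, where the reference value $S_f(\uu_-,\uu_+,\tau)$ is strictly positive by~\eqref{gLax-shock}. For the small shock the paper instead relies on the mechanism of Proposition~\ref{P.small-shock}: thanks to $f''(\uu_-)\neq0$ (part of hypothesis~\eqref{hyp-convex-shock}), $f'$ is strictly monotone on a neighbourhood of $\uu_-$, so Lax's condition reduces to the non-vanishing of the jump $w(t)=u_m(t,\psi_s(t))-u_-(t,\psi_s(t))$; one then computes $w'(t)=\Phi(t,w(t))$ with $\Phi$ of class $\cC^1$ and $\Phi(t,0)=0$, so that Cauchy--Lipschitz uniqueness propagates $w(t)\neq0$ from its initial value. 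This step also makes explicit that the discontinuity at $\tpsi_{s,0}$ must initially be of shock type, which your write-up never records. Replacing your $S_f$ argument on $\psi_s$ by this one closes the proof.
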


\begin{proof}
Here again we follow the extension/patching strategy used for the previous results, assume without loss of generalty that $\psi_0=0$, and correspondingly drop some tildes. With a straightforward adaptation of Lemma~\ref{L.multivalued} and using Proposition~\ref{P.constant}, we find that for $\epsilon>0$ sufficiently small and for any $v_0\in BUC^1(\RR^\star\setminus\{\psi_{s,0}\})$ satisfying~\eqref{init-condition-shocks}, there exists
$u_l,u_c,u_r\in BUC^1(\RR^+\times\RR)$ global classical solutions to~\eqref{eq-u} with initial data such that
\[
\begin{cases}
u_l(0,x)=\uu_-+v_{0}&\text{ if $x<\psi_{s,0}$}\\
u_c(0,x)=\uu_-+v_{0}&\text{ if $x\in(\psi_{s,0},0)$}\\
u_r(0,x)=\uu_++v_{0}&\text{ if $x>0$}
\end{cases}
\]
and satisfying the desired estimates. 

We may now identify shock locations. Let $\psi_l$ and $\psi_r$ be defined by the differential equations
\[\psi_l'(t)\,=\, s_f(u_l(t,\psi_l(t)),u_c(t,\psi_l(t))) \text{ and } \psi_r'(t)\,=\, s_f(u_c(t,\psi_r(t)),u_r(t,\psi_r(t))) \]
with initial data $\psi_l(0)=\psi_{l,0}$ and $\psi_r(0)=0$. Then we observe that $\psi_l,\psi_r\in \cC^2(\RR^+)$ and 
\begin{align*}
\abs{\psi_l'(t)-f'(\uu_-)}&\leq \Norm{v_0}_{L^{\infty}(\RR^-\setminus\{\psi_{s,0}\})}\,C\, e^{g'(\uu_-)\, t}\,,\\
\abs{\psi_r'(t)-\sigma}&\leq \Norm{v_0}_{L^{\infty}(\RR^\star\setminus\{\psi_{s,0}\})}\,C\, e^{\max(\{g'(\uu_+),g'(\uu_-)\})\, t}\,.
\end{align*}
Since $f'(\uu_-)>\sigma$ and $\psi_{s,0}<0$ this implies that the time
\[t^\star=\argmin\ \{ t\in\RR^+ \ | \ \psi_l(t)=\psi_r(t)\}\]
is postive and finite. At last $\psi_f$ is defined by the differential equation
\[\psi_f'(t)\,=\, s_f(u_l(t,\psi_f(t)),u_r(t,\psi_f(t)))\]
with ``initial'' data $\psi(t^\star)=\psi_r(t^\star)$. Note that $\psi_f\in \cC^2(\RR^+)$ and that $\abs{\psi_f'(t)-\sigma}$ also decays exponentially with the same estimate as $\abs{\psi_r'(t)-\sigma}$. Then we set $\psi_s=(\psi_l)_{|[0,t^\star]}$ and
\[
\psi:\,\RR\to\RR\,,\qquad t\mapsto
\begin{cases}
\psi_r(t)&\text{ if $0\leq t\leq t^\star$}\\
\psi_f(t)&\text{ if $t>t^\star$}
\end{cases}\,.
\]
Again the last estimates on $\psi$ are obtained by integration with 
\[
\psi_\infty=\int_0^\infty (\psi'(t)-\sigma)\,\dd t\,.
\]

We can now construct the solution $u$. For any $t\in[0,t^\star]$, we define
\[
u(t,x)=
\begin{cases} 
u_l(t,x) & \text{ if } x<\psi_s(t)\\
u_c(t,x)  & \text{ if } \psi_s(t) < x<\psi(t)\\
u_r(t,x) & \text{ if } x>\psi(t)
\end{cases}\,.
\]
For subsequent times $t\in [t^\star,+\infty)$, we set
\[
u(t,x)=
\begin{cases} 
u_l(t,x) & \text{ if } x<\psi(t)\\
u_r(t,x) & \text{ if } x>\psi(t)
\end{cases}\,.
\]
One easily checks that the function $u$ is an entropy solution as soon as $\epsilon$ is sufficiently small, following the proof of Proposition~\ref{P.small-shock} (along the path $\{(t,\psi_s(t))\,|\,0\leq t\leq t^\star\}$) and Proposition~\ref{Th.shock} (along the path $\{(t,\psi(t))\,|\,t\geq 0\}$).
\end{proof}

\section{Transverse stability in the multidimensional framework}\label{s:multiD}

In the present section we consider some generalizations of the main results proved so far to multidimensional settings. In particular we now replace \eqref{eq-u} with
\begin{equation}\label{eq-u-multiD}
\d_t u+\Div \big(f(u)\big)=g(u)
\end{equation}
where the spatial variable $x$ belongs to $\RR^d$, $d\in\NN$, and $f:\RR\to\RR^d$.

Most of adaptations are rather straightforward and we only sketch main variations required in the process. We aim not at gaining new insights on the general stability problem but at demonstrating a certain robustness of the one-dimensional arguments.

Starting from spatial dimension $2$ the range of possible geometries for discontinuities becomes too wide to be reasonably covered here, even if one restricts to a few typical cases. Therefore, in the multidimensional case we only consider shockless perturbations.

\subsection{Asymptotic stability of constant states}

Propositions~\ref{P.constant-classical} and~\ref{P.high-order} translate almost verbatim to the multidimensional case.

\begin{Proposition}\label{P.constant-multiD}
Let $f\in\cC^2(\RR;\RR^d)$, $g\in \cC^2(\RR)$ and $\uu\in\RR$ be such that 
\[
g(\uu)=0\qquad\textrm{and}\qquad g'(\uu)<0\,.
\]
Then for any $C_0>1$, there exists $\epsilon>0$ such that for any $v_0\in BUC^1(\RR^d)$ satisfying 
\[\Norm{v_0}_{W^{1,\infty}(\RR^d)}\leq \epsilon\,,\]
the initial data $u\id{t=0}=\uu+v_0$ generates a global unique classical solution to~\eqref{eq-u-multiD}, $u\in BUC^1(\RR^+\times\RR^d)$, and it satisfies for any $t\geq0$
\begin{align*}
\Norm{u(t,\cdot)-\uu}_{L^{\infty}(\RR^d)}&\leq \Norm{v_0}_{L^{\infty}(\RR^d)}C_0\,e^{g'(\uu)\,t}\ ;\\
\Norm{\nabla u(t,\cdot)}_{L^{\infty}(\RR^d;\RR^d)}&\leq \Norm{\nabla v_0}_{L^{\infty}(\RR^d;\RR^d)} C_0\,e^{g'(\uu)\,t}\,.
\end{align*}
\end{Proposition}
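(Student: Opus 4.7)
The plan is to adapt the strategy of the proof of Proposition~\ref{P.constant-classical} to the multidimensional setting. Setting $v\eqdef u-\uu$, equation~\eqref{eq-u-multiD} takes the quasilinear form
\[
\d_t v + f'(\uu+v)\cdot\nabla v - g'(\uu)\,v \,=\, g(\uu+v)-g(\uu)-g'(\uu)\,v\,,
\]
and differentiating componentwise shows that each $\d_i v$ solves the scalar transport-reaction equation
\[
\d_t(\d_i v) + f'(\uu+v)\cdot\nabla(\d_i v) - \big[g'(\uu+v)-f''(\uu+v)\cdot\nabla v\big]\d_i v\,=\,0\,.
\]
The crucial point is that the same convection field drives the evolution of $v$ and of each $\d_i v$, and the two reaction rates are close to $g'(\uu)<0$ when $\|v\|_{W^{1,\infty}}$ is small.

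The first step is to replace the linear machinery of Section~\ref{s:linear} with its multidimensional counterpart at the level of $L^\infty$ estimates. Given $a\in\cC^0([0,T);BUC^1(\RR^d;\RR^d))$ and $b\in\cC^0([0,T);BUC^0(\RR^d))$, the ODE $\dot X=a(t,X)$ generates a global flow on $\RR^d$, and the method of characteristics delivers an evolution system $\cS_{a,b}$ on $BUC^0(\RR^d)$ satisfying
\[
\Norm{\cS_{a,b}(s,t)\,w_0}_{L^\infty(\RR^d)}\,\leq\, e^{\int_s^t \sup_{\RR^d} b(\tau,\cdot)\,\dd\tau}\,\Norm{w_0}_{L^\infty(\RR^d)}\,.
\]
This is the direct analog of the $L^\infty$ part of Proposition~\ref{P.estimate-semilinear-constant} and bypasses the resolvent identity of Lemma~\ref{l:resolvent}, which has no convenient multidimensional kernel representation.

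Once this propagator is available, the $L^\infty$-decay of $v$ follows verbatim from the bootstrap carried out in the proof of Proposition~\ref{P.constant-conditional}: writing $v$ via the mild formulation associated with $\cS_{f'(\uu+v),\,g'(\uu)}$ and the quadratic source, the smallness $\|v_0\|_{L^\infty}\leq\epsilon$ propagates via Gr\"onwall into $\|v(t,\cdot)\|_{L^\infty}\leq C_0\|v_0\|_{L^\infty}e^{g'(\uu)t}$ so long as the classical solution persists. For the gradient, we apply the same propagator estimate to the equation satisfied by each $\d_i v$, now with $b=g'(\uu+v)-f''(\uu+v)\cdot\nabla v$. So long as $\|v(t,\cdot)\|_{W^{1,\infty}}\leq 2\epsilon\,e^{g'(\uu)t}$, this $b$ is pointwise bounded above by $g'(\uu)+\eta(\epsilon)$ with $\eta(\epsilon)\to 0$, so that one obtains a closed bootstrap inequality on $\|\nabla v(t,\cdot)\|_{L^\infty(\RR^d;\RR^d)}$ of the form
\[
\Norm{\nabla v(t,\cdot)}_{L^\infty}\leq \Norm{\nabla v_0}_{L^\infty}\,e^{g'(\uu)\,t}\,e^{\epsilon\,C(f,g)/|g'(\uu)|}\,.
\]

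A standard continuity argument on $\|v(t,\cdot)\|_{L^\infty}+\|\nabla v(t,\cdot)\|_{L^\infty}$, combined with the classical local well-posedness theory in $BUC^1(\RR^d)$, then yields global existence together with the announced bounds. The main obstacle is essentially notational: one must verify that Proposition~\ref{P.estimate-semilinear-constant} does transport to the multidimensional setting purely through characteristics. This is routine, and no convexity hypothesis on $f$ is needed since the smallness of $\nabla v_0$ is imposed directly.
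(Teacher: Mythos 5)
Your proposal is correct and follows essentially the same route as the paper: the paper likewise reduces the multidimensional case to the one-dimensional bootstrap by replacing the linear estimates of Section~\ref{s:linear} with characteristics-based analogues (its Lemma~\ref{l:resolvent-multiD} is proved by exactly the flow representation you invoke), and then runs the Duhamel--Gr\"onwall argument of Propositions~\ref{P.constant-conditional} and~\ref{P.constant-classical} unchanged. One small point to watch: since the paper measures $\nabla u$ in the Euclidean norm of $\RR^d$, you should apply the transport--reaction estimate to $|\nabla v|$ as a whole --- legitimate because every component $\d_i v$ is carried by the same drift and multiplied by the same scalar coefficient $g'(\uu+v)-f''(\uu+v)\cdot\nabla v$ --- rather than to each component separately, which would degrade the stated constant $C_0$ by a factor up to $\sqrt{d}$.
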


In the foregoing statement and henceforth, we measure vectors in $\RR^d$ with the Euclidean norm.

\begin{Proposition}\label{P.high-order-multiD} Under the assumptions of Proposition~\ref{P.constant-multiD}, if one assumes additionally that $f\in\cC^{k+1}(\RR;\RR^d)$, $g\in \cC^k(\RR)$ with $k\in\NN$, $k\geq 2$ then there exists $C_k>0$, depending on $f$, $g$ and $k$ but not on the initial data $v_0$, such that if $v_0\in BUC^k(\RR^d)$ additionally to constraints in Proposition~\ref{P.constant-multiD}, then the global unique classical solution to~\eqref{eq-u-multiD} emerging from the initial data $\uu+v_0$ satisfies
$u\in BUC^{k}(\RR^+\times\RR^d)$ and for any $\alpha$ multi-index of size $|\alpha|=k$ and $t\geq0$
\[
\Norm{\d^\alpha u(t,\cdot)}_{L^{\infty}(\RR^d)}\leq 
\Norm{\d^\alpha v_0}_{L^{\infty}(\RR^d)}e^{C_k\,\|v_0\|_{W^{1,\infty}}\,(1+\|v_0\|_{W^{1,\infty}}^{k-1})}\,e^{g'(\uu)\,t}\,.
\]
\end{Proposition}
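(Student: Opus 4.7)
The plan is to mirror the proof of Proposition~\ref{P.high-order} nearly verbatim, with multi-indices $\alpha$ of size $|\alpha|=k$ playing the role of $\partial_x^k$. The first ingredient is a multidimensional analog of Proposition~\ref{P.estimate-semilinear-constant}: for $\vec a\in\cC^0([0,T);BUC^1(\RR^d;\RR^d))$ and $b\in\cC^0([0,T);BUC^0(\RR^d))$, I would like an evolution system $\cS_{\vec a,b}$ on $BUC^0(\RR^d)$ generated by $\cL_t=-\vec a(t,\cdot)\cdot\nabla+b(t,\cdot)$, satisfying
\[
\Norm{\cS_{\vec a,b}(s,t)v_0}_{L^\infty(\RR^d)}
\leq \exp\!\Big(\int_s^t\sup_{\RR^d} b(\tau,\cdot)\,\dd\tau\Big)\Norm{v_0}_{L^\infty(\RR^d)},
\]
together with preservation of non-negativity. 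Since $\vec a\cdot\nabla$ is no longer elliptic when $d\geq 2$, the resolvent construction of Lemma~\ref{l:resolvent} is not available; I would replace it with a direct characteristic construction, using that under the stated regularity the flow $X(\cdot;s,x)$ of $\vec a$ is well-defined and of class $\cC^1$ in $x$, and that the explicit formula
\[
(\cS_{\vec a,b}(s,t)v_0)(x)=v_0(X(s;t,x))\,\exp\!\Big(\int_s^t b(\tau,X(\tau;t,x))\,\dd\tau\Big)
\]
delivers the claimed bound and positivity at once.

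With this block at hand, I would apply $\partial^\alpha$, $|\alpha|=k$, to the quasilinear form $\partial_tv+f'(\uu+v)\cdot\nabla v-g'(\uu)v=g(\uu+v)-g(\uu)-g'(\uu)v$ and obtain
\[
\partial_t(\partial^\alpha v)+f'(\uu+v)\cdot\nabla(\partial^\alpha v)-g'(\uu)(\partial^\alpha v)=\mathcal R_\alpha[v],
\]
where $\mathcal R_\alpha[v]$ collects a zeroth-order term $c_0(v)\,v\,\partial^\alpha v$ (reflecting that $g'(\uu+v)-g'(\uu)=\mathcal O(v)$) together with terms $c_\beta(v)\,\prod_{i=1}^m\partial^{\beta_i}v$ for multi-indices $\beta_i$ with $|\beta_i|\geq 1$ and $\sum_i|\beta_i|\in\{k,k+1\}$, all coefficients bounded as long as $v$ remains small in $L^\infty$. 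Duhamel's formula relative to $\cS_{f'(\uu+v),g'(\uu)}$ then yields an integral equation for $\partial^\alpha v$ strictly analogous to the one appearing in the proof of Proposition~\ref{P.high-order}.

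The remaining quantitative step is a multidimensional Gagliardo--Nirenberg-type interpolation: for $1\leq\ell\leq k$ and $w\in BUC^k(\RR^d)$,
\[
\max_{|\gamma|=\ell}\Norm{\partial^\gamma w}_{L^\infty(\RR^d)}\leq C\,\Norm{\nabla w}_{L^\infty(\RR^d;\RR^d)}^{\frac{k-\ell}{k-1}}\max_{|\gamma|=k}\Norm{\partial^\gamma w}_{L^\infty(\RR^d)}^{\frac{\ell-1}{k-1}},
\]
which I would prove directly by slicing along lines and invoking the one-dimensional inequality already exploited in the proof of Proposition~\ref{P.high-order}. This converts each product $\prod_i\partial^{\beta_i}v$ into $C''\Norm{v}_{W^{1,\infty}}^{m-1}\max_{|\gamma|=k}\Norm{\partial^\gamma v}_{L^\infty(\RR^d)}$. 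Combining this with the $W^{1,\infty}$ bound on $v$ furnished by Proposition~\ref{P.constant-multiD}, injecting into Duhamel, taking the maximum over $|\alpha|=k$, and invoking Gronwall's lemma closes the argument with the advertised constant $C_k$.

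The principal obstacle is the replacement of the one-dimensional resolvent analysis by the characteristic construction of $\cS_{\vec a,b}$, which requires the $BUC^1$ regularity of $f'(\uu+v)$; this is guaranteed by Proposition~\ref{P.constant-multiD} as soon as $k\geq 2$ since then $f\in\cC^3(\RR;\RR^d)$ and $v\in BUC^1(\RR^+\times\RR^d)$. Once this is secured, all other steps are routine transcriptions of the one-dimensional arguments, with the multidimensional Gagliardo--Nirenberg inequality replacing its scalar counterpart and multi-indices absorbing the bookkeeping burden.
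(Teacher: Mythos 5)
Your proposal is correct and follows the same overall scheme as the paper: the paper's proof of Proposition~\ref{P.high-order-multiD} is literally the one-dimensional proof of Proposition~\ref{P.high-order} transposed with multi-indices, resting on a multidimensional version of the linear estimates. The one place where you genuinely diverge is in how the linear propagator is obtained. The paper keeps the resolvent route: Lemma~\ref{l:resolvent-multiD} constructs $(\lambda-L_{a,b})^{-1}$ explicitly by integrating along the flow of $a$ (so the resolvent construction is in fact still available despite the loss of ellipticity --- your assertion to the contrary is too strong), and then invokes the abstract evolution-system theorems of~\cite[Chapter~5]{Pazy}, at the price of the observation that $BUC^1(\RR^d)$ is only a common core rather than a common domain. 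Your direct construction of $\cS_{\vec a,b}$ by the method of characteristics is more elementary, yields the $L^\infty$ bound and positivity immediately, and sidesteps the core-versus-domain discussion entirely; what the paper's route buys is uniformity with the one-dimensional treatment and with the general framework used throughout. Since only the $L^\infty\to L^\infty$ bound of the evolution system enters the Duhamel formula for $\d^\alpha v$, your reduced toolbox is sufficient.

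One small point deserves care: the interpolation inequality you state for $\max_{|\gamma|=\ell}\Norm{\d^\gamma w}_{L^\infty(\RR^d)}$ is not obtained by slicing along coordinate lines alone, because slicing only controls pure directional derivatives $(e\cdot\nabla)^\ell w$. You need to add the standard polarization step expressing any mixed $\d^\gamma$, $|\gamma|=\ell$, as a finite linear combination of $(e_j\cdot\nabla)^\ell$ over suitably chosen directions $e_j$, and then apply the one-dimensional Landau--Kolmogorov inequality along lines in each direction $e_j$. With that (classical) supplement the inequality holds and the Gr\"onwall argument closes exactly as in the proof of Proposition~\ref{P.high-order}.
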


Proofs are also essentially identical. The proof involves now the consideration of operators $L_{a,\,b}=-a\cdot\nabla+b$, but still with $a$ close to $f'(\uu)$ and $b$ close to $g'(\uu)$. Since transport operators are not elliptic when $d>1$, their domains --- the set of $v\in BUC^0(\RR^d)$ such that $L_{a,\,b}v\in BUC^0(\RR^d)$ --- cannot be identified with a classical function space. Note however that this does not alter any part of the argument and that in particular one may still apply~\cite[Chapter~5, Theorem~3.1]{Pazy}  with $X=BUC^0(\RR^d)$ and $Y=BUC^1(\RR^d)$ even though $Y$ is now a common core, rather than a common domain.

Key estimates are provided in the following lemma.

\begin{Lemma}\label{l:resolvent-multiD}
Assume $a\in BUC^1(\RR^d;\RR^d)$, $b\in BUC^0(\RR^d)$.\\
{\bf (i).} Then for any $\lambda\in\CC$ such that 
\[
\Re(\lambda)>\sup_{\RR^d} b(\cdot)\,,\] 
for any $F\in BUC^0(\RR^d)$, there exists a unique $\check{v}(\,\cdot\,;\lambda)\in BUC^0(\RR^d)$ such that 
\[
(\lambda-L_{a,\,b})\,\check{v}(\,\cdot\,;\lambda)\,=\,F
\]
and moreover
\[
\Norm{\check v(\,\cdot\,;\lambda)}_{L^\infty(\RR^d)}\leq \frac1{\Re\lambda-\sup_{\RR^d} b(\cdot)}\Norm{F}_{L^\infty(\RR^d)}\,.
\]
{\bf (ii).} Assume moreover that 
\[
b\textrm{ is constant}\qquad\textrm{ and }\qquad\Re(\lambda)>b+\sup_{\RR^d}\|\dd a\|(\cdot)
\]
then for any $F\in BUC^1(\RR^d)$, $\check{v}(\,\cdot\,;\lambda)\in BUC^1(\RR^d)$ and
\[
\Norm{\nabla\check v(\,\cdot\,;\lambda)}_{L^\infty(\RR^d)}\leq \frac1{\Re\lambda-b-\sup_{\RR^d}\|\dd a\|(\cdot)}\Norm{\nabla F}_{L^\infty(\RR^d)}\,.
\]
\end{Lemma}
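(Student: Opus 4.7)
The plan is to proceed by the method of characteristics, mimicking the one-dimensional treatment of Lemma~\ref{l:resolvent}. Since $a\in BUC^1(\RR^d;\RR^d)$ is bounded and globally Lipschitz, the flow $X(\cdot\,;x)$ defined by $\d_s X(s;x)=a(X(s;x))$ and $X(0;x)=x$ is well defined for all $s\in\RR$, and any candidate $v\in BUC^0(\RR^d)$ solving $(\lambda-L_{a,\,b})v=F$ must satisfy, along each characteristic, the scalar linear ODE
\[
\frac{\dd}{\dd s}\,v(X(s;x))\,=\,(b(X(s;x))-\lambda)\,v(X(s;x))+F(X(s;x))\,.
\]

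For uniqueness in part (i), taking $F\equiv0$ yields $v(X(s;x))=e^{\int_0^s(b(X(\tau;x))-\lambda)\,\dd\tau}\,v(x)$, whose modulus is at least $e^{|s|(\Re\lambda-\sup_{\RR^d}b)}\,|v(x)|$ when $s<0$; boundedness of $v$ forces $v(x)=0$. For existence, integrating backwards from $s=-\infty$ and requiring $v\circ X(\cdot\,;x)$ to remain bounded motivates the definition
\[
\check v(x;\lambda)\,\eqdef\,\int_{-\infty}^0 e^{\int_r^0(b(X(\tau;x))-\lambda)\,\dd\tau}\,F(X(r;x))\,\dd r\,,
\]
and a direct differentiation of $s\mapsto\check v(X(s;x);\lambda)$ at $s=0$---using the group property $X(r;X(s;x))=X(r+s;x)$ after a translation of the integration variable---shows that this formula solves the equation. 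The bound
\[
\Norm{\check v(\,\cdot\,;\lambda)}_{L^\infty(\RR^d)}\,\leq\,\int_{-\infty}^0 e^{-|r|(\Re\lambda-\sup_{\RR^d}b)}\,\dd r\,\Norm{F}_{L^\infty(\RR^d)}\,=\,\frac{\Norm{F}_{L^\infty(\RR^d)}}{\Re\lambda-\sup_{\RR^d}b}
\]
is then immediate.

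For part (ii), constancy of $b$ reduces the formula to $\check v(x;\lambda)=\int_{-\infty}^0 e^{-r(b-\lambda)}\,F(X(r;x))\,\dd r$, and differentiating under the integral sign gives
\[
\nabla\check v(x;\lambda)\,=\,\int_{-\infty}^0 e^{-r(b-\lambda)}\,(D_xX(r;x))^\top\,\nabla F(X(r;x))\,\dd r\,.
\]
The spatial Jacobian satisfies $\d_r(D_xX)=\dd a(X(r;x))\,D_xX$ with $D_xX(0;x)=I$, hence $\|D_xX(r;x)\|\leq e^{|r|\,\sup_{\RR^d}\|\dd a\|}$; combined with $|e^{-r(b-\lambda)}|=e^{-|r|(\Re\lambda-b)}$ for $r\leq0$, this yields the claimed gradient bound under the assumption $\Re\lambda>b+\sup_{\RR^d}\|\dd a\|$.

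The step that is genuinely new with respect to the one-dimensional situation is the verification that $\check v$ actually lies in $BUC^0(\RR^d)$, since in dimension $d\geq 2$ uniform continuity is no longer automatic from boundedness together with pointwise continuity. This is the main point requiring care in the plan, and it is handled by splitting the defining integral at a threshold $|r|=R$: the tail contributes a small $L^\infty$ mass to $\check v$ by exponential decay of the integrand, and on the window $|r|\leq R$ the composition $F\circ X(r;\cdot)$ is uniformly continuous since $F$ is uniformly continuous and $X(r;\cdot)$ is globally Lipschitz with constant bounded by $e^{R\,\sup_{\RR^d}\|\dd a\|}$.
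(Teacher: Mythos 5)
Your proof is correct and follows essentially the same route as the paper: the same characteristic-flow representation $\check v(x;\lambda)=\int_{-\infty}^0 e^{\int_r^0(b(X(\tau;x))-\lambda)\,\dd\tau}F(X(r;x))\,\dd r$, the same differentiation under the integral for part (ii), and the same Gr\"onwall bound $\|\dd_xX(r;x)\|\leq e^{|r|\sup_{\RR^d}\|\dd a\|}$. The only additions beyond the paper's (very terse) argument are the explicit uniqueness step and the check that $\check v\in BUC^0(\RR^d)$, both of which are sound.
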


\begin{proof}
One may proceed as in the one-dimensional case, with generalized formula
\[
\check v(x;\lambda)\eqdef \int_{-\infty}^0 e^{\int_s^0(b(X(\sigma;x))-\lambda)\,\dd \sigma}F(X(s;x))\,\dd s
\]
where $X(\cdot,x)$ is such that $X(0,x)=x$ and $\forall s\in\RR$, $\d_sX(s;x)=a(X(s;x))$. Since $a$ is Lipschitz, $X(\cdot,x)$ is indeed globally well-defined. When moreover $b$ is constant one derives from the latter
\begin{align*}
\dd_x\check v(x;\lambda)(h)
&=\int_{-\infty}^0 e^{\int_s^0(b-\lambda)\,\dd \sigma}\dd F(X(s;x))(\dd_x X(s;x)(h))\,\dd s
\end{align*}
and the claim follows from the observation that for any $s\leq0$
\[
\|\dd_x X(s;x)(h)\|
\leq e^{\int_s^0\|\dd a(X(\sigma;x))\|\,\dd \sigma}\|h\|
\leq e^{\int_s^0\|\dd a\|_{L^\infty}\,\dd \sigma}\|h\|\,.
\]
\end{proof}

\subsection{Asymptotic stability of Riemann shocks}

We now turn to the stability of plane Riemann shocks. For the sake of clarity and without loss of generality, we fix the direction of propagation of the reference Riemann shock, split spatial variables accordingly $x=(\xi,y)\in\RR\times\RR^{d-1}$, and correspondingly $f=(\fpar,\fperp)$.

We consider a plane wave $\uu$, 
\[\uu(t,x)=\uU(\xi-(\psi_0+\sigma t))\,,\] 
with $\psi_0\in\RR$, $\sigma\in\RR$ and $\uU$ such that
\begin{equation}\label{uU-multiD}\uU(\xi)=\begin{cases}
\uu_- &\text{ if }  \xi<0\\
\uu_+ &\text{ if }  \xi>0
\end{cases}\end{equation}
where $(\uu_-,\uu_+)\in\RR^2$, $\uu_+\neq\uu_-$ are such that
\begin{equation}\label{shock-multiD}
g(\uu_+)=0\,,\quad g(\uu_-)=0\quad \text{ and } \quad
\fpar(\uu_+)-\fpar(\uu_-)=\sigma(\uu_+-\uu_-)
\end{equation}
and we assume that $\uu$ is strictly entropy-admissible, that is 
\begin{equation}\label{gKru-shock}
\begin{cases}
\qquad\qquad\qquad\sigma\,>\,\fpar'(\uu_+)\,,&\\[0.5em]
\frac{\fpar(\tau\,\uu_-+(1-\tau)\,\uu_+)-\fpar(\uu_-)}{\tau\,\uu_-+(1-\tau)\,\uu_+-\uu_-}>
\frac{\fpar(\tau\,\uu_-+(1-\tau)\,\uu_+)-\fpar(\uu_+)}{\tau\,\uu_-+(1-\tau)\,\uu_+-\uu_+}&\qquad\textrm{for any }\ \tau\in(0,1)\,,\\[0.5em]
\qquad\qquad\qquad \fpar'(\uu_-)\,>\,\sigma\,,
\end{cases}
\end{equation}
(though this does not play a strong role in our analysis; see Remark~\ref{R.Lax-vs-Oleinik}). Moreover we require
\begin{equation}\label{stab-multiD}
g'(\uu_+)<0 \quad \text{ and } \quad g'(\uu_-)<0\,.
\end{equation}

To accurately account for discontinuities, we introduce
\begin{align*}
\Omega^{+}_{\psi_0}
&\eqdef\{\,(\xi,y)\in \RR\times\RR^{d-1}\,;\quad\xi>\psi_0(y)\,\}\\
\Omega^{-}_{\psi_0}
&\eqdef\{\,(\xi,y)\in \RR\times\RR^{d-1}\,;\quad\xi<\psi_0(y)\,\}\\
\Omega_{+}^{\psi}
&\eqdef\{\,(t,\xi,y)\in \RR_+\times\RR\times\RR^{d-1}\,;\quad\xi>\psi(t,y)\,\}\\
\Omega_{-}^{\psi}
&\eqdef\{\,(t,\xi,y)\in \RR_+\times\RR\times\RR^{d-1}\,;\quad\xi<\psi(t,y)\,\}\\
\Omega_{\psi_0}
&\eqdef\Omega^{+}_{\psi_0}\cup\Omega^{-}_{\psi_0}\,,\qquad
\Omega^{\psi}
\eqdef\Omega_{+}^{\psi}\cup\Omega_{-}^{\psi}\,.
\end{align*}

\begin{Theorem}\label{Th.shock-multiD}
Let $f\in \mathcal{C}^2(\RR;\RR^d)$, $g\in \mathcal{C}^2(\RR)$ and $(\sigma,\uu_-,\uu_+)\in\RR^3$ satisfying~\eqref{shock-multiD}-\eqref{gKru-shock}-\eqref{stab-multiD}. For any $C_0>1$, there exists $\epsilon>0$ and $C>0$ such that for any $\psi_0\in BUC^1(\RR^{d-1})$ and $v_0\in BUC^1(\Omega_{\psi_0})$ satisfying
\begin{equation*}
\begin{array}{rl}
\Norm{v_0}_{W^{1,\infty}(\Omega_{\psi_0})}&\leq\epsilon\,,\\
\Norm{\nabla_y\psi_0}_{L^{\infty}(\RR^{d-1})}&\leq\epsilon\,,
\end{array}
\end{equation*}
there exists $\psi\in\cC^2(\RR^+\times\RR^{d-1})$ with initial data $\psi(0,\cdot)=\psi_0$ such that the entropy solution to~\eqref{eq-u-multiD}, $u$, generated by the initial data 
\[
u(0,\xi,y)=\uU(\xi-\psi_0(y))+v_0(\xi,y)\,,
\]
$\uU$ being as in \eqref{uU-multiD}, belongs to $ BUC^1(\Omega^\psi)$ and satisfies for any $t\geq 0$ 
\begin{align*}
\Norm{u(t,\cdot,\cdot)-\uu_\pm}_{L^{\infty}(\Omega^{\pm}_{\psi(t,\cdot)})}&\leq \Norm{v_0}_{L^{\infty}(\Omega^{\pm}_{\psi_0})} C_0\,e^{g'(\uu_\pm)\,t}\,,\\
\Norm{\nabla u(t,\cdot,\cdot)}_{L^{\infty}(\Omega^{\pm}_{\psi(t,\cdot)};\RR^d)}&\leq \Norm{\nabla v_0}_{L^{\infty}(\Omega^{\pm}_{\psi_0};\RR^d)} C_0\,e^{g'(\uu_\pm)\,t}\,,\\
\Norm{\nabla_y\psi(t,\cdot)}_{L^\infty(\RR^{d-1};\RR^{d-1})}&\leq 
\Norm{\nabla_y\psi_0}_{L^\infty(\RR^{d-1};\RR^{d-1})}\,C_0\,
\,+\,\Norm{ v_0}_{L^{\infty}(\Omega_{\psi_0})}\,C\\
\Norm{\d_t\psi(t,\cdot)+s_{\fperp}(\uu_-,\uu_+)\cdot\nabla_y\psi(t,\cdot)-\sigma}_{L^\infty(\RR^{d-1})}&\leq \Norm{v_0}_{L^{\infty}(\Omega_{\psi_0})}
\,C\, e^{\max(\{g'(\uu_+),g'(\uu_-)\})\, t}\,,
\end{align*}
where $s_{\fperp}$ is as defined as in \eqref{slope-function}, and moreover there exists $\psi_\infty$ such that 
\[
\Norm{\psi_\infty-\psi_0}_{L^\infty(\RR^{d-1})}\,\leq \Norm{v_0}_{L^{\infty}(\Omega_{\psi_0})} C\,,
\]
and for any $t\geq 0$
\[
\Norm{\psi(t,\cdot)-\psi_\infty(\cdot-t\,s_{\fperp}(\uu_-,\uu_+))-t\,\sigma}_{L^\infty(\RR^{d-1})}\,\leq \Norm{v_0}_{L^{\infty}(\Omega_{\psi_0})} C\, e^{\max(\{g'(\uu_+),g'(\uu_-)\})\, t}\,.
\]
\end{Theorem}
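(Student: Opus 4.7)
My plan closely follows the extension-propagation-patching strategy used in the one-dimensional proof of Theorem~\ref{Th.shock}; the only significant novelty is that the shock is now a graph hypersurface $\{\xi=\psi(t,y)\}$ whose evolution is governed by a quasilinear first-order PDE on $\RR^{d-1}$ rather than an ODE. First, I would construct extensions $v_{0,\pm}\in BUC^1(\RR^d)$ of the restrictions $v_0|_{\Omega_{\psi_0}^\pm}$, whose $W^{1,\infty}$ norms are controlled by the corresponding restricted norms, up to multiplicative constants arbitrarily close to $1$; this is obtained by composing the initial datum with the straightening diffeomorphism $(\xi,y)\mapsto(\xi-\psi_0(y),y)$, performing a standard one-sided extension across the resulting flat boundary (\ie an analog of Lemma~\ref{L.multivalued} in the multidimensional setting), and pulling back; the Jacobian factors introduce extra terms of size $\|\nabla_y\psi_0\|_{L^\infty}\leq \epsilon$ that are easily absorbed. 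With $\epsilon$ small enough, Proposition~\ref{P.constant-multiD} then provides global classical solutions $u_\pm\in BUC^1(\RR_+\times\RR^d)$ of~\eqref{eq-u-multiD} starting from $\uu_\pm+v_{0,\pm}$ and satisfying the stated exponential decay bounds on $u_\pm-\uu_\pm$ and $\nabla u_\pm$.

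Next, I would determine $\psi$ through the multidimensional Rankine-Hugoniot condition, which for a graph shock reads
\[
\d_t\psi\,=\,s_{\fpar}\big(u_-(t,\psi(t,y),y),u_+(t,\psi(t,y),y)\big)-\nabla_y\psi\cdot s_{\fperp}\big(u_-(t,\psi(t,y),y),u_+(t,\psi(t,y),y)\big)\,,
\]
with initial datum $\psi(0,\cdot)=\psi_0$, the symbols $s_{\fpar}, s_{\fperp}$ being defined componentwise as in~\eqref{slope-function}. This quasilinear scalar first-order PDE is solved by the method of characteristics: the associated characteristic system
\[
\dot y \,=\, s_{\fperp}(u_-(t,\psi,y),u_+(t,\psi,y))\,,\qquad \dot\psi \,=\, s_{\fpar}(u_-(t,\psi,y),u_+(t,\psi,y))\,,
\]
is globally Cauchy-Lipschitz because $u_\pm\in BUC^1(\RR_+\times\RR^d)$, and $\psi(t,\cdot)$ is reconstructed by inverting the transverse flow $y_0\mapsto y(t;y_0)$. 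Uniform $L^\infty$ bounds on $\nabla_y\psi$ follow from differentiating the flow in $y_0$ and applying Gr\"onwall, the driving terms $\nabla_y u_\pm$, $\d_\xi u_\pm$ entering the variational equation being integrably small in time thanks to Proposition~\ref{P.constant-multiD}; this accounts both for the amplification factor $C_0$ on $\|\nabla_y\psi_0\|_{L^\infty}$ and for the additional contribution of size $C\,\|v_0\|_{L^\infty}$ arising from integrating $s_{\fpar}(u_-,u_+)-\sigma$ along characteristics.

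The asymptotic phase is obtained as in dimension one by noting that
\[
\d_t\psi+s_{\fperp}(\uu_-,\uu_+)\cdot\nabla_y\psi-\sigma\,=\,\big(s_{\fpar}(u_-,u_+)-s_{\fpar}(\uu_-,\uu_+)\big)-\nabla_y\psi\cdot\big(s_{\fperp}(u_-,u_+)-s_{\fperp}(\uu_-,\uu_+)\big)
\]
decays exponentially in $L^\infty(\RR^{d-1})$ by Lipschitz continuity of $s_\bullet$ and pointwise exponential decay of $u_\pm(t,\psi(t,y),y)-\uu_\pm$. Introducing the change of variable $\widehat\psi(t,y)\eqdef\psi(t,y+t\,s_{\fperp}(\uu_-,\uu_+))-\sigma\,t$ then turns this into $\d_t\widehat\psi=R(t,y)$ with $R$ exponentially small uniformly in $y$, so that $\widehat\psi(t,\cdot)$ converges in $L^\infty(\RR^{d-1})$ at the prescribed rate to some $\psi_\infty$. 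At last, the patched function $u=u_-\cdot\mathbf{1}_{\Omega_-^\psi}+u_+\cdot\mathbf{1}_{\Omega_+^\psi}$ lies in $BUC^1(\Omega^\psi)$, solves~\eqref{eq-u-multiD} classically on $\Omega^\psi$, and satisfies the Rankine-Hugoniot condition on $\{\xi=\psi(t,y)\}$ by construction; the Oleinik entropy condition, applied in the normal direction to the shock hypersurface, follows by continuity from the strict version~\eqref{gKru-shock} at the reference state, provided $\epsilon$ is small enough.

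The main obstacle, compared to the one-dimensional counterpart, will be the global well-posedness of the equation for $\psi$ together with uniform $BUC^1$ estimates, since $\psi$ now solves a genuine first-order PDE on $\RR^{d-1}$ rather than an ODE. The characteristics method applies only because $u_\pm-\uu_\pm$ is exponentially small in time, so the characteristic flow is an exponentially small perturbation of the free transport at constant velocity $s_{\fperp}(\uu_-,\uu_+)$; consequently no singularity of $\nabla_y\psi$ can form in finite time and the transverse flow remains a global diffeomorphism.
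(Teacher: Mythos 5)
Your proposal is correct and follows essentially the same extension--propagation--patching route as the paper's proof of Theorem~\ref{Th.shock-multiD}: extend the two smooth parts of $v_0$ using the Lipschitz graph structure, evolve them by Proposition~\ref{P.constant-multiD}, solve the quasilinear Rankine--Hugoniot equation for $\psi$, and glue. The only presentational difference is that the paper obtains the bounds on $\psi$ and $\nabla_y\psi$ via the evolution system generated by $-a(t,\cdot)\cdot\nabla_y$ and a Duhamel formula, whereas you argue directly with characteristics and Gr\"onwall; since that evolution system is itself constructed from the characteristic flow (Lemma~\ref{l:resolvent-multiD}), the two arguments are the same in substance.
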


\begin{proof}
The proof follows the same scheme as the one of the proof of Theorem~\ref{Th.shock}. To start with, even though one cannot reduce the proof to the case when $\psi_0=0$, the Lipschitz assumption on $\psi_0$ is sufficient to extend both smooth parts of $v_0$ so as to apply Proposition~\ref{P.constant-multiD} to both extensions and receive functions $u_-$ and $u_+$. Once the shock position is determined --- through the equation $\xi=\psi(t,y)$ ---, that is, once $\psi$ is known, the sought solution is obtained by gluing $u_-$ and $u_+$.

Once again the evolution of $\psi$ is determined from the Rankine-Hugoniot conditions. They read
\[
\d_t\psi(t,\cdot)+s_{\fperp}(u_-(t,\psi(t,\cdot),\cdot),u_+(t,\psi(t,\cdot),\cdot))\cdot\nabla_y\psi(t,\cdot)
\,=\,s_{\fpar}(u_-(t,\psi(t,\cdot),\cdot),u_+(t,\psi(t,\cdot),\cdot))\,.
\]
At the level of regularity considered here, the local-well posedness of the foregoing PDE is classical and we may again focus on proving bounds. Uniform bounds on $\psi(t,\cdot)-\sigma t$ and $\nabla_y \psi$, hence also the global well-posedness, is derived by arguing as in the proof of Proposition~\ref{P.constant-multiD}. Indeed, Lemma~\ref{l:resolvent-multiD} yields the existence of and bounds for the evolution system generated by the time-dependent linear operator $L_{a(t,\cdot),\,0}=-a(t,\cdot)\cdot\nabla$, where $a\in\cC^0([0,T),BUC^1(\RR^{d-1}))$. We may then apply Duhamel's formula on the equation
\begin{multline*}
\d_t(\psi(t,\cdot)-\sigma t)+s_{\fperp}(u_-(t,\psi(t,\cdot),\cdot),u_+(t,\psi(t,\cdot),\cdot))\cdot\nabla_y(\psi(t,\cdot)-\sigma t) \\
\,=\,s_{\fpar}(u_-(t,\psi(t,\cdot),\cdot),u_+(t,\psi(t,\cdot),\cdot))-s_{\fpar}(\uu_-,\uu_+)\,,
\end{multline*}
and use that  $ u_--\uu_-$ and $ u_+-\uu_+$ decay exponentially in time in $W^{1,\infty}$.
Then the decay bounds follow from substracting $\left(s_{\fperp}(u_-(t,\psi(t,\cdot),\cdot),u_+(t,\psi(t,\cdot),\cdot))-s_{\fperp}(\uu_-,\uu_+)\right)\cdot\nabla_y\psi(t,\cdot)$ to each side of the above equation and using again the exponential decay in time in $L^\infty$ of $u_--\uu_-$ and $u_+-\uu_+$, with 
\begin{align*}
\psi_\infty(y)
&\,\eqdef\,\psi_0(y)
+\int_0^\infty\frac{\dd}{\dd t}\left(\psi(\cdot,y+s_{\fperp}(\uu_-,\uu_+)\,\cdot)-\sigma\,\cdot\right)(t)\,\dd t\\
&\,=\,\psi_0(y)
+\int_0^\infty\left(\d_t\psi
+s_{\fperp}(\uu_-,\uu_+)\cdot\nabla_y\psi\right)(t,y+s_{\fperp}(\uu_-,\uu_+)t)-\sigma\,\dd t
\,.
\end{align*}
Finally, the $L^\infty$ bounds on $u_\pm$ allow to guarantee, lessening $\epsilon$ if necessary, that the strict entropy-admissibility of the shock propagates for positive times, and the uniqueness of entropy solutions is provided by the theory of Kru\v zkov~\cite{Kruzhkov}.
\end{proof}

The proof of Theorem~\ref{P.shock-higher-derivatives} also carries over to the multidimensional case. Note in particular that the Lipschitz condition on $\psi_0$ is already sufficient to extend smooth parts of $v_0$ while maintaining higher-order regularity.

\begin{Proposition}\label{P.shock-higher-derivatives-multiD}
Let $k\in\NN$, $k\geq2$, $f\in\cC^{k+1}(\RR;\RR^d),\ g\in \cC^k(\RR)$ and $(\sigma,\uu_-,\uu_+)\in\RR^3$ satisfying~\eqref{shock-multiD}-\eqref{stab-multiD} and defining a strictly entropic plane wave. There exists $\epsilon>0$ and $C_k$ such that for any $\psi_0\in BUC^1(\RR^d)$ and $v_0\in BUC^k(\Omega_{\psi_0})$ satisfying
\begin{equation*}
\Norm{v_0}_{W^{1,\infty}(\Omega_{\psi_0})}\leq\epsilon \quad  \text{ and } \quad 
\Norm{\nabla_y\psi_0}_{L^{\infty}(\RR^d)}\leq\epsilon\,,
\end{equation*}
there exist $\psi\in\cC^{2}(\RR^+)$ and $u\in BUC^k(\Omega^\psi)$ with initial data $\psi(0,\cdot)=\psi_0$ and $u(0,\cdot,\cdot)$ given by 
\[
u(0,\xi,y)=\uU(\xi-\psi_0(y))+v_0(\xi,y)\,,
\]
$\uU$ being as in \eqref{uU-multiD}, such that $u$ is an entropy solution to~\eqref{eq-u-multiD} and satisfies for any $t\geq 0$ and any $j\in \{0,\dots,k\}$
\begin{align*}
\Norm{u(t,\cdot,\cdot)-\uu_\pm}_{W^{j,\infty}(\Omega_{\psi(t,\cdot)}^\pm)}&\leq \Norm{v_0}_{W^{j,\infty}(\Omega_{\psi_0}^\pm)} C_k\,e^{g'(\uu_\pm)\,t}\,.
\end{align*}
If moreover $\psi_0\in BUC^{k+1}(\RR^d)$, then $\psi\in BUC^{k+1}(\RR\times\RR^d)$ and for any $t\geq 0$ and any $j\in \{1,\dots,k\}$
\begin{align*}
\Norm{
(\d_t+s_{\fperp}(\uu_-,\uu_+)\cdot\nabla_y)^{j+1}\psi(t,\cdot)}_{L^\infty(\RR^d)}&\leq \Norm{v_0}_{W^{j,\infty}(\Omega_{\psi_0})}\,C_k\, e^{\max(\{g'(\uu_+),g'(\uu_-)\})\, t}\,.
\end{align*}
\end{Proposition}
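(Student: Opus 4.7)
The plan is to run the extension-then-patch strategy used in the proof of Theorem~\ref{Th.shock-multiD}, now at higher regularity. First I would invoke a multidimensional analog of Lemma~\ref{L.multivalued-high-order} (essentially~\cite[Theorem~4.26]{Adams} after locally flattening the graph $\{\xi=\psi_0(y)\}$) to produce $v_{0,\pm}\in BUC^k(\RR^d)$ such that $v_{0,\pm}\id{\Omega_{\psi_0}^\pm}=v_0\id{\Omega_{\psi_0}^\pm}$ and $\Norm{v_{0,\pm}}_{W^{j,\infty}(\RR^d)}\leq C_k\,\Norm{v_0}_{W^{j,\infty}(\Omega_{\psi_0}^\pm)}$ for every $0\leq j\leq k$. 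The smallness of $\Norm{\nabla_y\psi_0}_{L^\infty}$ bounds the Lipschitz constant of the interface, so the extension norms are uniform in the data. Applying Proposition~\ref{P.high-order-multiD} to each $v_{0,\pm}$ yields global classical solutions $u_\pm\in BUC^k(\RR^+\times\RR^d)$ to~\eqref{eq-u-multiD} enjoying the expected exponential decay at rate $g'(\uu_\pm)$ of each derivative of order at most $k$.

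Next, $\psi$ is constructed exactly as in the proof of Theorem~\ref{Th.shock-multiD} through the nonlinear transport (Rankine-Hugoniot) equation
\[
\d_t\psi+s_{\fperp}(u_-(t,\psi,\cdot),u_+(t,\psi,\cdot))\cdot\nabla_y\psi=s_{\fpar}(u_-(t,\psi,\cdot),u_+(t,\psi,\cdot)),\qquad \psi(0,\cdot)=\psi_0,
\]
and the patching of $u_-$ and $u_+$ across $\{\xi=\psi(t,y)\}$ provides the claimed entropy solution $u\in BUC^k(\Omega^\psi)$. The $W^{j,\infty}(\Omega^\pm_{\psi(t,\cdot)})$-bound on $u-\uu_\pm$ follows directly from the corresponding bound on $u_\pm-\uu_\pm$ granted by Proposition~\ref{P.high-order-multiD}, up to enlarging $C_k$.

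For the stronger claim, set $D\eqdef\d_t+s_{\fperp}(\uu_-,\uu_+)\cdot\nabla_y$; since $D$ has constant coefficients, it commutes with every $\nabla_y^\beta$. Subtracting the background part of the equation gives
\[
D\psi-\sigma\,=\,R(t,y),\qquad R\,\eqdef\,\big(s_{\fpar}(u_-,u_+)-\sigma\big)-\big(s_{\fperp}(u_-,u_+)-s_{\fperp}(\uu_-,\uu_+)\big)\cdot\nabla_y\psi,
\]
with all $u_\pm$ evaluated at $(t,\psi(t,y),y)$. Note that $R$ vanishes when $u_\pm\equiv\uu_\pm$ and hence is, in a schematic sense, of order $\Norm{u_\pm-\uu_\pm}$. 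Iterating yields $D^{j+1}\psi=D^jR$, and the Fa\`a di Bruno formula expresses $D^jR$ as a polynomial in space-time derivatives of $u_\pm-\uu_\pm$ and of $\psi$, of total order at most $j+1$, with the $u_\pm$-factors contributing the desired exponential decay by Proposition~\ref{P.high-order-multiD}. The additional assumption $\psi_0\in BUC^{k+1}(\RR^{d-1})$ is used to bootstrap $\psi\in BUC^{k+1}(\RR^+\times\RR^{d-1})$: differentiating the Rankine-Hugoniot equation $\ell$ times in $y$ for $0\leq\ell\leq k$ and invoking the evolution-system estimates provided by Lemma~\ref{l:resolvent-multiD} propagates spatial $BUC^{k+1}$-regularity of $\psi$ globally in time.

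The main obstacle is the bookkeeping in the iterated expansion of $D^jR$: a brute-force chain-rule expansion produces $\d_t$-derivatives of $\psi$ of order $j+1$, which a priori cannot be controlled solely by the initial datum $\psi_0\in BUC^{k+1}$. The fix is to use the equation $D\psi=\sigma+R$ itself, inductively, to trade every time-derivative of $\psi$ against purely spatial derivatives of $\psi$ and $u_\pm$ at the cost of lower-order nonlinear terms. This reduction closes the estimate using only spatial derivatives of $\psi$ of order $\leq j+1$ (controlled by the spatial regularity step above) and derivatives of $u_\pm-\uu_\pm$ of order $\leq j$ (controlled by Proposition~\ref{P.high-order-multiD}), and a Gr\"onwall argument on the resulting hierarchy yields the claimed decay of $\Norm{D^{j+1}\psi(t,\cdot)}_{L^\infty}$ at rate $\max(\{g'(\uu_+),g'(\uu_-)\})$.
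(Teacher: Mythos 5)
Your proposal follows the same route the paper takes (the paper only remarks that the proof of Proposition~\ref{P.shock-higher-derivatives} carries over, using the Lipschitz bound on $\psi_0$ to extend the smooth parts at higher regularity, then invoking Proposition~\ref{P.high-order-multiD} and the gluing of Theorem~\ref{Th.shock-multiD}): extension of $v_0$ across the Lipschitz graph, higher-order constant-state stability for $u_\pm$, patching via the Rankine--Hugoniot transport equation, and estimating $(\d_t+s_{\fperp}(\uu_-,\uu_+)\cdot\nabla_y)^{j+1}\psi$ by iterating the equation and trading time derivatives of $\psi$ for spatial ones. Your write-up is a correct and more detailed account of essentially the same argument.
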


\section{Further comments}\label{s:conclusion}

Finally we conclude our contribution by replacing the present analysis in the more general framework of stability of discontinuous solutions of hyperbolic systems of balance laws. In particular, we comment on which feature of Riemann shocks of scalar laws is involved in each of the derived properties.

To begin with, note that this is from the scalar character of the equations that stems the fact that the analysis of piecewise smooth solutions may be reduced to on one hand the analysis of smooth solutions and on the other hand of how to glue them according to the Rankine-Hugoniot conditions. The strategy may also be used in multidimensional cases (as in Section~\ref{s:multiD}) or with wave profiles that are not piecewise constant (as in~\cite{DR2}). The argument is crucially used to allow for perturbations adding discontinuities. However, as long as perturbations do not change the regularity structure, up to some loss in optimality one may replace the argument with a direct stability analysis whose starting point is expounded below. In the scalar case, the latter restriction amounts essentially to requiring smoothness on perturbations, whereas in the system case it should also include compatibility conditions at discontinuities, for instance met by perturbations supported far away from the original discontinuities\footnote{We recall that during the finalization of the present contribution we have been informed that a system case has been analyzed in~\cite{YZ} and we refer to it for such an example of analysis.}. Unfortunately for the moment the consideration of discontinuous perturbations in a system case still lies beyond reach of both frameworks.

To give an idea of what a direct stability analysis for Riemann shocks would look like, we now write the corresponding spectral stability problem (in the one-dimensional case). The question is whether one may solve uniquely in $(\check{v}(\,\cdot\,;\lambda),\check{\psi}(\lambda))$ and continuously with respect to any $(F,\varphi)$ the system
\begin{align*}
(\lambda&+(f'(\uu_+)-\sigma)\d_x-g'(\uu_+))\,\check{v}(\,\cdot\,;\lambda)\,=\,F\qquad\textrm{on }\RR_+^\star\,,\\
(\lambda&+(f'(\uu_-)-\sigma)\d_x-g'(\uu_-))\,\check{v}(\,\cdot\,;\lambda)\,=\,F\qquad\textrm{on }\RR_-^\star\,,\\
\lambda\check{\psi}(\lambda)
&-\,\left(\frac{f'(\uu_+)-\sigma}{\uu_+-\uu_-}\check{v}(0^+;\lambda)
-\frac{f'(\uu_-)-\sigma}{\uu_+-\uu_-}\check{v}(0^-;\lambda)\right)\,=\,\varphi\,.
\end{align*}
An inspection of the proof of Lemma~\ref{l:resolvent} shows that since $f'(\uu_+)<\sigma$, when $\Re(\lambda)>g'(\uu_+)$, $\check{v}(\,\cdot\,;\lambda)$ is uniquely and continuously determined on $\RR_+^\star$ by the first equation, and obtained there by solving from $+\infty$. Likewise, when $\Re(\lambda)>g'(\uu_-)$, $\check{v}(\,\cdot\,;\lambda)$ is uniquely and continuously determined on $\RR_-^\star$ by the second equation, and obtained there by solving from $-\infty$. Thus when $\Re(\lambda)>\max(\{g'(\uu_-);g'(\uu_+)\})$, the problem is uniquely solved if and only if $\lambda\neq 0$. A closer examination reveals that it follows from $f'(\uu_+)<\sigma<f'(\uu_-)$ that the spectrum is 
\[
\left\{\ \lambda\ ;\ \Re(\lambda)\leq \max(\{g'(\uu_-);g'(\uu_+)\})\ \right\}\cup\{0\}
\]
and that when $\max(\{g'(\uu_-);g'(\uu_+)\})<0$, $0$ has multiplicity $1$ (in the sense provided by resolvent singularities). The consideration of the Riemann shocks may leave the reader with the deceptive impression that in scalar balance laws the stability of traveling waves is determined by smooth parts of the profiles whereas shocks play a passive role, encoding invariance by space translation. This special feature is however due to the fact that $g(\uU)$ is continuous across discontinuities. In contrast, in~\cite{DR2} we provide examples of some cases when positions of discontinuities are exponentially stable and of some others when they bring exponential instabilities even when the smooth parts are point-wise dissipative (in a weighted sense that yields equivalent norms).

In the foregoing paragraph and all through the text, not much care was required in the sense in which spectral properties were considered. Indeed, any reasonable functional framework is associated with the same spectrum. This is also reflected at the nonlinear level in the fact that stability is also obtained in arbitrarily high regularity. This is easy to see at the spectral level when profiles are piecewise constant, but it is more fundamentally related to the ellipticity of underlying operators, hence here to the fact that $f'(\uU)-\sigma$ is bounded away from zero. When this condition is relaxed (to include sonic points), one may still deduce, at any level of regularity, nonlinear stability from spectral stability. Yet spectral properties do depend on functional choices, and specifically on the degree of regularity under consideration. In particular in~\cite{DR2} we exhibit an instability mechanism that is seen in the topology of $BUC^k$, $k\geq 1$, but not in $BUC^0$, which demonstrates that not all key properties in scalar balance laws are captured by $L^\infty$-based estimates.

The present paper opens at least two clear close follow-up : the stability of more general traveling waves of scalar balance laws, and  the stability of Riemann shocks in general systems. The former is studied in details in the companion paper~\cite{DR2} where breakthrough obtained here are combined with an elucidation of the effects of the presence of sonic points in wave profiles so as to derive an essentially complete picture for balance laws. The latter is left for future work.

\newcommand{\etalchar}[1]{$^{#1}$}
\newcommand{\SortNoop}[1]{}

\end{document}